\newtheorem{definition}{{\bf Definition}}[section]
\newtheorem{theorem}[definition]{{\bf Theorem}}
\newtheorem{corollary}[definition]{{\bf Corollary}}
\newtheorem{proposition}[definition]{\noindent {\bf Proposition}}
\newtheorem{lemma}[definition]{\noindent {\bf Lemma}}
\newtheorem{sublemma}[definition]{\noindent {\bf Sublemma}}
\newtheorem{claim}[definition]{\noindent {\bf Claim}}
\newtheorem{problems}{Problems}
\providecommand{\ij}{}
\renewcommand{\ij}{{\{i,j\}}}		
\newcommand{\N}{\mathbb{N}}
\newcommand{\K}{\mathbb{K}}
\newcommand{\R}{\mathbb{R}}
\newcommand{\x}{\mathbf{x}}
\newcommand{\E}{\mathbf{E}}
\newcommand{\Pol}[1][n]{\K[\x_\ij]}		
\DeclareMathOperator{\Cov}{Cov}
\DeclareMathOperator{\Min}{Min}
\begin{document}\label{firstpage}

\title [A fixed point theorem for commuting families] {A FIXED POINT THEOREM FOR COMMUTING FAMILIES OF RELATIONAL HOMOMORPHISMS.
APPLICATIONS TO METRIC SPACES, ORIENTED GRAPHS AND ORDERED SETS} 

\author[A.Khamsi]{Amine Khamsi}
\address{Department of Mathematical Sciences, 
University of Texas at El Paso, 
El Paso, TX 79968, USA}
\email{mohamed@utep.edu}
\author[M.Pouzet] {Maurice Pouzet}
\address{Univ. Lyon, University Claude-Bernard  Lyon1, UMR 5208, Institut Camille Jordan, 
43, Bd. du 11 Novembre 1918,
69622 Villeurbanne, France et Department of Mathematics and Statistics, The University of Calgary, Calgary, Alberta, Canada}
\email{pouzet@univ-lyon1.fr}

\date{\today}

\maketitle

\begin{abstract}   We extend to binary relational systems the notion of compact and normal structure, introduced by  J.P.Penot for
metric spaces,  and we prove that for the involutive and reflexive ones, every commuting
family of relational homomorphisms has a common fixed point. The proof is based upon the clever argument
that J.B.Baillon discovered  in order to show that a similar conclusion  holds for bounded
hyperconvex metric spaces  and then refined by the first author to metric spaces with a compact and normal structure. Since the non-expansive mappings are relational homomorphisms, our result
includes those of T.C.Lim,  J.B.Baillon and the first author.  We show that it  extends the Tarski's fixed point theorem to 
 graphs which are retracts of reflexive oriented zigzags of bounded length. Doing so,  we illustrate the fact  that the consideration of binary relational systems or of generalized metric spaces are equivalent. 
\end{abstract}
AMS subject classification(2010). Primary: 05, 06, 08, Secondary: 37C25.

Key words: Fixed-point, non-expansive mappings, normal structure, Chebyshev's center, modular function  space, relational homomorphisms, order-preserving maps, metric spaces, hyperconvex spaces, relational systems, ordered sets, graphs, fences, zigzags, retracts.

\section{Introduction}

	Two results about fixed points are very much related. One is the famous Tarski's theorem (\cite{tarski}, 1955): \emph{every
order-preserving map on a complete lattice has a fixed point}. The other is a theorem of
R.Sine and P.M.Soardi (\cite{sine}, \cite{soardi}, 1979): \emph{every non-expansive mapping on a bounded hyperconvex metric space has a fixed point}. Indeed, as was shown by D.Misane and the second author (\cite{misane}, 1984, see also \cite{pouzet}, 1985 and  \cite{jawhari-al}, 1986)
if one considers a generalisation of metric spaces, where -instead of real numbers- the distance values
are members of an ordered monoid equipped with an involution, then the Sine-Soardi's theorem is still valid and for a particular
ordered monoid, these generalized metric spaces and their non-expansive mappings translate into 
ordered sets and order-preserving maps and --as a matter of fact--  hyperconvex spaces correspond to
complete lattices.
	
	Since A.Tarski obtained, in fact, that every commuting family of order-preserving maps on a 
complete lattice has a common fixed point, E. Jawhari et  al \cite{jawhari-al} considered the question whether in this frame every
commuting family of non-expansive mappings on a bounded hyperconvex space has a common fixed point,
discovering that it was still unsettled in the frame of ordinary metric spaces. They got a positive  answer for 
countable families; J.B.Baillon (\cite{baillon}, 1986)  got a positive answer for
arbitrary families acting on ordinary hyperconvex metric spaces.
The Baillon's proof is based upon a clever compactness argument. At firt glance, this argument  
works with minor changes for generalized  hyperconvex spaces considered in  \cite{jawhari-al} and, on an other hand, with some extra  work,  it can
be adapted to metric spaces endowed with a compact and normal structure --as abstractly defined by
J.P.Penot(\cite{penot}, 1977)-- spaces  which include the  hyperconvex ones. This extension was done by the first author in \cite{khamsi}.	

In this paper we propose a generalization of the Penot's notions in the frame of binary relational systems and their relational homomorphisms.
Indeed, on one hand, the non-expansive mappings   $f$ acting on an ordinary metric space, (or a
generalized one), say $(E, d)$, with distance function  $d$ from $E\times E$ into the set $\R^{+}$ of nonnegative  reals  (or into an ordered 
monoid 
$V$ equipped with an involution),  are relational homomorphisms of the binary relational system $\mathbf E:=(E, \{\delta_v : v\in V\})$, where $\delta_v:=\{ (x,y)\in E\times E : d(x,y)
\leq  v \}$ for every $v$ belonging to $V$. On an other hand, the Penot's notions are  very easy to define in this
frame. We prove that \emph{if a  reflexive and involutive binary relational system has a  compact and normal structure then 
every commuting family of relational homomorphisms has a common fixed point} (Theorem \ref{thm:cor}).  As an illustration, we get that \emph{on a graph which is a retract  of a product of  reflexive oriented zigzags of bounded length, every
commuting family of preserving maps has a common fixed point} (Theorem \ref{thm:cor4}); Tarski's result
corresponds to the case of a retract of a power  of  a two-element zigzag. Characterizations of  reflexive and involutive binary relational system with  a  compact and normal structure are left open.  This paper is an other opportunity to go beyond the analogy between metric spaces and binary relational sytems. We consider  generalized metric spaces whose values of the distance belong to an  involutive Heyting algebra (or involutive op-quantale) as it was initiated  in \cite {jawhari-al}. In this context, the notion of one-local retract, which is the key in proving our main result, fits naturally with the parent notion of hole-preserving map. Our illustration with graphs fits in the case of bounded hyperconvex spaces.   

After this introduction, this paper consists of four additional sections. Section 2  contains   the notions of compact and normal structure  for relational systems; an illustration with  a fixed-point result is given. Section 3 contains the notion of one-local retract. The main property,  Theorem \ref{thm:best} is stated;  Theorem \ref {thm:cor} is  given  a consequence. This property is proved in Section 4. Section 5 is an attempt to illustrate our main result. Subsection 5.2 contains the exact relationship between reflexive involutive binary systems and generalized metric spaces over an involutive monoid (e.g. Lemma \ref{lem:metric-relation}  and Theorem \ref{thm:cor2}). In Subsection 5.3, the notion of hyperconvexity is recalled. Notions of  inaccessibility   and boundedness insuring that hyperconvex spaces have a compact and normal structure are stated (Corollary \ref{cor:compact+normal}). Spaces over a Heyting  algebra  with their main properties are presented (Theorem \ref{thm:hyper1} and Theorem \ref{thm:hyper2}). Subsection 5.4 contains  the 
relationship between one-local retract and  hole-preserving maps. Subsection 5.5. rassembles the results for ordinary metric spaces. The case of ordered sets is treated in Subsection 5.6. It contains a characterization of posets with a compact structure. The case of directed graphs with the zigzag distance is treated in Subsection 5.7. It contains a  characterization of graphs isometrically embeddable into a product of oriented zigzags (Theorem \ref{theo:isometric}) and our fixed point theorem (Theorem \ref{thm:cor4}).

\section{Basic definitions, elementary properties and a fix-point result}
\subsection{Binary relations and metric notions}

We adapt to binary relations and to binary relational systems the basic notions of the theory of metric spaces. The trick we use for this purpose consists to denote by $d(x,y)\leq r$ the fact that the pair $(x,y)$ belongs to the binary relation $r$, and to interpret  $d$ as a distance,  $d(x,y)$  and $r$ as numbers (a justification is given in Subsection \ref{subsection:preserve}).

The basic concepts about relational systems are the following. For a set $E$, a \emph{binary relation} on  $E$ is
any  subset $r$  of $E\times E$; the \emph{restriction}  of $r$ to a subset $A$ of $E$ is $r_{\restriction A}:= r\cap (A\times A)$. The \emph{inverse} of $r$ is the binary relation $r^{-1}:= \{(x,y): (y,x)\in r\}$; the \emph{diagonal} is  $\Delta_E:= \{(x,x): x\in E\}$. A relation $r$ is \emph{symmetric} if $r=r^{-1}$; the relation $r$ is \emph{reflexive} if $\Delta_E\subseteq r$.  A   map $f: E\rightarrow E$ {\it preserves} $r$ if $(f(x), f(y)) \in  r$
whenever
$(x,y) \in r$; the map  $f$  \emph{preserves} a subset $A$ if $f(A)\subseteq A$ (this amounts to say that it preserves the unary relation $A$). The map $f$ {\it preserves}  a set $\mathcal E$ of binary relations $r$ on $E$ if it
preserves every member $r$ of $\mathcal E$.  The pair $\mathbf E:= (E, \mathcal E  )$ is a \emph{binary
relational system};  note that the maps which preserve $\mathcal E$  are in fact the relational homomorphisms  of this
system. We denote by $End (\mathbf E)$ the collection   of self-maps which preserve $\mathcal E$ (as far we only consider self-maps, no indexation of $\mathcal E$  by some index set is
required).  We set $\mathcal E^{-1}:= \{ r\subseteq E\times E: r^{-1}\in\mathcal E\}$ and we say that $\mathbf E:= (E, \mathcal E)$ is \emph{involutive} if $\mathcal E=\mathcal E^{-1}$; we say that $\mathbf E$ is \emph{reflexive}, resp. \emph{symmetric}, if each member $r\in \mathcal E$ is reflexive, resp. symmetric.  For a subset $A$ of $E$, the {\it restriction} of $\mathcal E$ to $A$ is 
$\mathcal {E}_{\restriction A}:= \{r_{\restriction A}: r\in \mathcal E\}$ and the {\it restriction} of $\mathbf E$ to $A$ is the binary
relational system $ \mathbf {E}_{\restriction  A}: =(A,  \mathcal E_{\restriction  A})$. For a subset $\mathcal E'$ of binary relations on $A$, we set $\bold p^{-1}_A(\mathcal E'):= \{r\in \mathcal E: r_{\restriction A}\in \mathcal E'\}$.  

	Let  $r$ be a binary relation on $E$ and let $x \in E$; the {\it ball} of {\it center } $x$, {\it
radius}  $r$, is the set  $B(x, r ):=\{ y \in E : (x,y) \in r\}$. Let $\mathcal E$ be a set of binary relations on $E$. We denote by $\mathcal B_{\mathcal E}$  the set of balls whose
radius belong to $\mathcal E$  that is  $\mathcal B_{\mathcal E} :=\{ B(x, r ) : x\in E, r\in \mathcal E\}$, we denote by $\hat {\mathcal B}_{\mathcal E}$  the set of all intersections of members of 
$\mathcal {B}_{\mathcal E}$, and we set $\hat {\mathcal B}_{\mathcal E}^*:= \hat {\mathcal {B}}_{\mathcal E} \setminus \{\emptyset \}$. Note that, as the intersection over the empty set,  $E\in \hat {\mathcal B}_{\mathcal E}$. For a subset $A$ of  $E$, the
$r$-{\it center}  is the set $C(A, r ):= \{ x\in E : A \subseteq  B(x, r )\}$.  We set $\Cov_{\mathcal E} (A):= \bigcap \{ B \in   \mathcal B_{\mathcal E}  : A\subseteq B\}$. The \emph{diameter} of $A$ is the set $\delta_{\mathcal E}(A):= \{r\in \mathcal E: A\times A  \subseteq r\}$; the {\it radius}  of
$A$ is  the set
$r_{\mathcal E}  (A):=\{ r \in  \mathcal E: A \subseteq B(x, r )$ for some $x\in  A \}$; note that $\delta_{\mathcal E}(\emptyset)= \mathcal E$ and $r_{\mathcal E}(\emptyset)= \emptyset$.  If $\mathbf {E}:= (E, \mathcal E)$, we may replace the index $\mathcal E$ in the previous notations by $\mathbf E$, e.g. $\mathcal {B}_{\mathbf E}$, $\hat {\mathcal B}^{\ast}_{\mathbf E}$, $\Cov_{\mathbf E}(A)$, $r_{\mathbf E}  (A)$ and $\delta_{\mathbf  E} (A)$ replace $\mathcal B_{\mathcal E}$, $\hat {\mathcal B}_{\mathcal E}^*$,  $\Cov_{\mathcal E}(A)$, $r_{\mathcal E}  (A)$ and  $\delta_{\mathcal  E} (A)$.

  Our notions of center and radius are inspired from the notions of Chebyshev's center and radius.

  The elementary properties about center, diameter and radius we need are given by the following
proposition:

\begin{proposition}\label{prop:center} Let  $\mathbf E  :=
(E, \mathcal E  )$ be a binary relational system,  $A\subseteq  E$ and  $r\subseteq E\times E$. Then:\begin{enumerate}[(i)]
\item $A \subseteq C(A, r)$ iff $r\in \delta  (A)$ and moreover $r\in  \delta (A)$ iff $r^{-1} \in \delta (A)$; 

\item $C(A, r )= \bigcap \{ B(a, r^{-1}) : a\in A\}$; 
\item If $r^{-1} \in  \mathcal  E$   then $C(A, r ) \in   \hat {\mathcal B}_{\mathbf E}$;  
\item 	$C(A, r ) = C ( \Cov_{\mathbf E} (A), r )$ whenever $r  \in   \mathcal E$;  
\item $r_{\mathbf E} (A)\subseteq r_{\mathbf E} (\Cov _{\mathbf E}(A))$ and if $A\not =\emptyset$, $\delta_{\mathbf E}(A) \subseteq r_{\mathbf E}(A)$;
\item 	 $\delta_{\mathbf E} (A)=  \delta_{\mathbf E} (\Cov_{\mathbf E}
(A) )$ provided that  $\mathbf E$   is involutive. 
\end{enumerate}
\end{proposition}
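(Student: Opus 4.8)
The plan is to reduce every clause to the single membership test $x\in C(A,r)\iff (x,a)\in r\text{ for all }a\in A$, obtained by unfolding the definitions of $C(\cdot,\cdot)$ and of the balls $B(\cdot,\cdot)$, and then to let the later clauses feed on the earlier ones. The only ingredient beyond such bookkeeping is that $\hat{\mathcal B}_{\mathbf E}$ is closed under arbitrary intersections, with $E$ as the empty one, and the elementary fact that $A\subseteq\Cov_{\mathbf E}(A)$ (an intersection of sets each containing $A$, or $E$ if there is none).

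For (i): $A\subseteq C(A,r)$ says $a\in C(A,r)$ for every $a\in A$, i.e. $(a,a')\in r$ for all $a,a'\in A$, i.e. $A\times A\subseteq r$, which is precisely the condition $r\in\delta(A)$; and $(A\times A)^{-1}=A\times A$ makes this condition invariant under $r\mapsto r^{-1}$, giving the second equivalence. For (ii): $x\in C(A,r)$ iff $(x,a)\in r$ for all $a\in A$ iff $(a,x)\in r^{-1}$ for all $a\in A$ iff $x\in\bigcap_{a\in A}B(a,r^{-1})$. Clause (iii) is then read off (ii): if $r^{-1}\in\mathcal E$ then each $B(a,r^{-1})$ lies in $\mathcal B_{\mathbf E}$, so $C(A,r)$ is an intersection of members of $\mathcal B_{\mathbf E}$ --- the empty one, equal to $E$, when $A=\emptyset$ --- hence an element of $\hat{\mathcal B}_{\mathbf E}$.

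For (iv) I would first record the obvious monotonicities $A\subseteq A'\Rightarrow C(A',r)\subseteq C(A,r)$ and $A\subseteq A'\Rightarrow\delta(A')\subseteq\delta(A)$. Applied to $A\subseteq\Cov_{\mathbf E}(A)$ the first gives one inclusion in (iv). For the other, if $x\in C(A,r)$ then $A\subseteq B(x,r)$, and since $r\in\mathcal E$ the ball $B(x,r)$ occurs among those intersected to form $\Cov_{\mathbf E}(A)$, whence $\Cov_{\mathbf E}(A)\subseteq B(x,r)$, i.e. $x\in C(\Cov_{\mathbf E}(A),r)$. The same remark --- a ball of $\mathcal B_{\mathbf E}$ containing $A$ contains $\Cov_{\mathbf E}(A)$ --- gives the first part of (v): if $r\in r_{\mathbf E}(A)$, witnessed by $x\in A$ with $A\subseteq B(x,r)$, then $\Cov_{\mathbf E}(A)\subseteq B(x,r)$ and $x\in A\subseteq\Cov_{\mathbf E}(A)$, so $r\in r_{\mathbf E}(\Cov_{\mathbf E}(A))$; the second part is immediate, since for $A\neq\emptyset$ any $x\in A$ witnesses $A\subseteq B(x,r)$ once $A\times A\subseteq r$.

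Finally (vi): the inclusion $\delta_{\mathbf E}(\Cov_{\mathbf E}(A))\subseteq\delta_{\mathbf E}(A)$ is monotonicity of $\delta$. For the reverse, take $r\in\delta_{\mathbf E}(A)$; by (i), $A\subseteq C(A,r)$, and involutivity gives $r^{-1}\in\mathcal E$, so by (iii) $C(A,r)$ is an intersection of balls of $\mathcal B_{\mathbf E}$, each of which contains $C(A,r)$ and hence $A$, and therefore contains $\Cov_{\mathbf E}(A)$; thus $\Cov_{\mathbf E}(A)\subseteq C(A,r)$. By (iv) (using $r\in\mathcal E$), $C(A,r)=C(\Cov_{\mathbf E}(A),r)$, so $\Cov_{\mathbf E}(A)\subseteq C(\Cov_{\mathbf E}(A),r)$, which by (i) means $\Cov_{\mathbf E}(A)\times\Cov_{\mathbf E}(A)\subseteq r$; as $r\in\mathcal E$ this says $r\in\delta_{\mathbf E}(\Cov_{\mathbf E}(A))$. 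I do not expect a real obstacle here; the only things to watch are the empty-set bookkeeping (so that $C(\emptyset,r)=E\in\hat{\mathcal B}_{\mathbf E}$, $r_{\mathbf E}(\emptyset)=\emptyset$ and $\delta_{\mathbf E}(\emptyset)=\mathcal E$ remain consistent with the asserted equalities and inclusions) and the discipline of invoking $\hat{\mathcal B}_{\mathbf E}$, $\Cov_{\mathbf E}$ or $\delta_{\mathbf E}$ only under the precise hypothesis ``$r\in\mathcal E$'' or ``$r^{-1}\in\mathcal E$'' attached to the clause being proved.
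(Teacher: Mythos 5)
Your proposal is correct and follows essentially the same route as the paper: each clause is reduced to the membership test $x\in C(A,r)\iff (x,a)\in r$ for all $a\in A$, with (iv) resting on the observation that a ball of $\mathcal B_{\mathbf E}$ containing $A$ contains $\Cov_{\mathbf E}(A)$, and (vi) obtained by combining (i), (iii) and (iv) exactly as in the paper's own (terse) argument. The minor reordering in (vi) --- establishing $\Cov_{\mathbf E}(A)\subseteq C(A,r)$ before invoking (iv) rather than after --- is immaterial.
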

\begin{proof} 

\noindent $(i)$. Immediate. 

\noindent $(ii)$.   $x\in C(A, r)$ iff $A\subseteq B(x, r)$; this latter condition amounts to $x\in \{ B(a, r^{-1}) : a\in A\}$. 
\noindent $(iii)$. Follows immediately from $(ii)$. 
\noindent $(iv)$. From the definition of the $r$-center, $x\in C(A, r)$ means that $A\subseteq B(x, r)$; since $r \in \mathcal E$ this inclusion amounts to $\Cov_{\mathbf E}(A) \subseteq B(x,r)$. Again, from the definition of the $r$-center, this means $x\in C(\Cov_{\mathbf E}(A), r)$.

\noindent $(v)$. Let $r\in r_{\mathbf E} (A)$ then $C(A,r)\cap A\not = \emptyset$. Since $C(A, r ) = C ( \Cov_{\mathbf E} (A), r )$ from $(iv)$, we have $C(\Cov_{\mathbf E} (A),r)\cap \Cov_{\mathbf E}(A)\not = \emptyset$, hence $r \in  r_{\mathbf E} (\Cov_{\mathbf E}(A))$.  The second assertion is obvious. 

\noindent $(vi)$. Trivially $\delta_{\mathbf E}(\Cov_{\mathbf E}(A))\subseteq \delta_{\mathbf E}(A)$. Conversely,  let  $r \in \delta _{\mathbf E}Ž (A)$.  Then $A \subseteq  B(x, r )$ for every  $x \in Ž A$, that is  $A\subseteq   C (A, r)$. From $(iv)$,  this yields $A\subseteq C(\Cov_{\mathbf E} (A), r )$. Since $\mathbf E$ is involutive, $r^{-1}\in \mathcal E$, hence from $(ii)$ we have $C(\Cov_{\mathbf E} (A), r)\in \hat{\mathcal B}_{\mathbf E}$. Since $A\subseteq C(\Cov_{\mathbf E} (A), r )$ it follows $\Cov_{\mathbf E} (A) \subseteq  C(\Cov_{\mathbf E} (A), r)$ that is $r\in \delta _{\mathbf E} (\Cov_{\mathbf E}  (A) )$ by $(i)$. 
\end{proof}

\subsection{Compact normal structure and retraction}

We introduce the notion of compact and normal structure as Penot did for metric spaces (\cite{penot}, 1977) and we prove a fix-point result.   
  
\begin{definition}  A subset $A$ of a binary relational system $\mathbf E$ is  \emph{equally centered} if  $r_{\mathbf E} (A) =\delta_{\mathbf E}(A)$. 
\end{definition}
For an example, if $A$ is the empty set  and $\mathcal E$ nonempty then $A$ is not equally centered. If $A$  a singleton, say $\{a\}$, then $A$ is equally centered  (indeed, by $(v) $ of Proposition \ref{prop:center},  we have $\delta_{\mathbf E}(A)\subseteq r_{\mathbf E} (A)$; if $r \in r_{\mathbf E} (A)$ then $(x,x)\in r$ hence $r\in \delta_{\mathbf E} (A)$.  If in addition $\mathbf E$ is reflexive and involutive, $\Cov_{\mathbf E} (A)$ is equally centered. Indeed, by $(v) $ of Proposition \ref{prop:center} we have $\delta_{\mathbf E}(\Cov (A)\subseteq r_{\mathbf E} (\Cov (A))$; now, if $r\in r_{\mathbf E} (\Cov (A))$ then,  since $r$ is reflexive, $(x,x)\in r$ and thus $r\in \delta_{\mathbf E} (A)$; since $\delta_{\mathbf E} (A)=  \delta_{\mathbf E} (\Cov_{\mathbf E}
(A) )$ by  $(vi)$ of Proposition \ref{prop:center}, $r\in \delta_{\mathbf E} (\Cov_{\mathbf E}
(A) )$; hence $r_{\mathbf E} (\Cov(A)) =\delta_{\mathbf E}(\Cov(A))$ and $A$ is equally centered. A generalization of this fact is given in Lemma \ref{lem:compact}. 

\begin{definition}
A  binary relational system   $\mathbf E$ has a  \emph{normal  structure} if no $A\in Ž \hat{\mathcal B}_{\mathcal E}$ distinct from a singleton is equally centered. Equivalently, if  $|A|\not =­1$  then   $r_{\mathcal E} (A) \not =\delta_{\mathcal E}(A)$. 
\end{definition}

\begin{definition}A binary relational system $\mathbf E:= (E, \mathcal E)$  has a \emph{compact structure} if $\mathcal {B}_{\mathcal E}$  has the finite intersection property (f.i.p.) that is,  for every family  $\mathcal F$ of members of  $ \mathcal {B}_{\mathcal E}$,  the intersection  of $\mathcal F$   is nonempty  provided that the intersection of all finite subfamilies of $\mathcal F$ are nonempty. 
\end{definition}
As it is easy to see,  $\mathcal {B}_{\mathcal E}$ has the f.i.p. iff  $\hat {\mathcal B}_{\mathcal E}$  has the f.i.p.(if $\mathcal F$ is a family of members of  $\hat {\mathcal B}_{\mathcal E}$ associate the family $\mathcal G$ made of balls which contain some member of $\mathcal F$ and observe that $\bigcap \mathcal F= \bigcap \mathcal G$. If all finite intersection of members of $\mathcal F$ are nonempty, the same holds for $\mathcal G$. Hence,  if $\mathcal {B}_{\mathcal E}$ has the f.i.p.,  $\bigcap \mathcal G\not= \emptyset$. The equality $\bigcap \mathcal F= \bigcap \mathcal G$ yields  $\mathcal F\not=\emptyset$, thus $\hat {\mathcal B}_{\mathcal E}$  has the f.i.p.).

We  have trivially: 
\begin{lemma}  \label{lem:infimum} If a binary relational system $\mathbf E:= (E, \mathcal E)$ has  a  compact structure then every chain of members of $\hat {\mathcal B}^{\ast}_{\mathbf E}$ has an infinimum, namely the intersection of all members of that chain.  \end{lemma}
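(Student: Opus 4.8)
The plan is to take $D:=\bigcap\mathcal C$ for the given chain $\mathcal C$ and to verify directly that $D$ is the infimum of $\mathcal C$ in the poset $(\hat{\mathcal B}^{\ast}_{\mathbf E},\subseteq)$; the only point that is not pure set theory is that this intersection is nonempty, and that is exactly where the compact structure enters.

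First I would fix a chain $\mathcal C$ of members of $\hat{\mathcal B}^{\ast}_{\mathbf E}$ and set $D:=\bigcap\mathcal C$. Since by definition each $C\in\mathcal C$ is an intersection of members of $\mathcal B_{\mathcal E}$, so is $D$; hence $D\in\hat{\mathcal B}_{\mathbf E}$. (Recall that $E=\bigcap\emptyset$ belongs to $\hat{\mathcal B}_{\mathbf E}$, which disposes of the case $\mathcal C=\emptyset$.) It then remains only to see that $D\neq\emptyset$, so that in fact $D\in\hat{\mathcal B}^{\ast}_{\mathbf E}$.

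Next I would invoke compactness. Let $\mathcal F$ be a finite subfamily of $\mathcal C$. Since $\mathcal C$ is a chain, $\bigcap\mathcal F$ is the $\subseteq$-smallest element of $\mathcal F$, which lies in $\hat{\mathcal B}^{\ast}_{\mathbf E}$ and is therefore nonempty. Thus all finite intersections of members of $\mathcal C$ are nonempty. By hypothesis $\mathcal B_{\mathcal E}$ has the finite intersection property, hence so does $\hat{\mathcal B}_{\mathcal E}$ (as recalled just above the statement), and therefore $D=\bigcap\mathcal C\neq\emptyset$; so $D\in\hat{\mathcal B}^{\ast}_{\mathbf E}$.

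Finally I would check the universal property. By construction $D\subseteq C$ for every $C\in\mathcal C$, so $D$ is a lower bound of $\mathcal C$ in $(\hat{\mathcal B}^{\ast}_{\mathbf E},\subseteq)$. Conversely, if $D'\in\hat{\mathcal B}^{\ast}_{\mathbf E}$ satisfies $D'\subseteq C$ for every $C\in\mathcal C$, then $D'\subseteq\bigcap\mathcal C=D$. Hence $D$ is the greatest lower bound of $\mathcal C$, i.e. its infimum, and it equals the intersection of all members of the chain. I do not expect a genuine obstacle here; the only step using the hypothesis rather than elementary set theory is the nonemptiness of $D$, and even there the required f.i.p.\ for $\hat{\mathcal B}_{\mathcal E}$ has already been reduced to the f.i.p.\ for $\mathcal B_{\mathcal E}$ in the paragraph preceding the statement.
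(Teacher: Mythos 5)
Your proof is correct and is exactly the argument the paper has in mind: the paper states this lemma with ``We have trivially'' and gives no proof, the point being precisely that a chain's finite subintersections equal its least members (hence are nonempty), so the f.i.p.\ of $\hat{\mathcal B}_{\mathcal E}$ (already reduced to that of $\mathcal B_{\mathcal E}$ in the preceding paragraph) yields nonemptiness of the total intersection, and the infimum property is pure set theory. Nothing to add.
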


\begin{lemma}\label{lem:invariant}  Let $f$ be an endomorphism  of an involutive binary relational system  $\mathbf E:= (E, \mathcal E)$. Then:
\begin {enumerate} [{(i)}]
\item Every minimal member $A$ of $\hat {\mathcal B}_{\mathbf E}^*$ which is 
 preserved by  $f$ is equally centered;
\item  If $\mathbf {E}$ has a compact structure then every member of 
$\hat {\mathcal B }_{\mathbf E}^*$   preserved by  $f$ contains a minimal one.
\end{enumerate}
\end{lemma}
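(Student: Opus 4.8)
The plan is to handle the two assertions separately, using Proposition \ref{prop:center} throughout and, for $(ii)$, Lemma \ref{lem:infimum} together with Zorn's lemma. Both parts mirror the Zorn/compactness scheme familiar from the metric setting of Penot and of the first author.

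For $(i)$, let $A$ be a minimal member of $\hat{\mathcal B}_{\mathbf E}^{*}$ with $f(A)\subseteq A$. Since $A\neq\emptyset$, part $(v)$ of Proposition \ref{prop:center} already gives $\delta_{\mathbf E}(A)\subseteq r_{\mathbf E}(A)$, so it suffices to prove the reverse inclusion. The first thing I would record is that $\Cov_{\mathbf E}(f(A))=A$: writing $B:=\Cov_{\mathbf E}(f(A))$, one has $B\in\hat{\mathcal B}_{\mathbf E}$, $B\neq\emptyset$ since it contains the nonempty set $f(A)$, $B\subseteq\Cov_{\mathbf E}(A)=A$ by monotonicity of $\Cov_{\mathbf E}$ and because $A\in\hat{\mathcal B}_{\mathbf E}$, and $f(B)\subseteq f(A)\subseteq B$; minimality of $A$ then forces $B=A$. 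Now fix $r\in r_{\mathbf E}(A)$, so that $C(A,r)\cap A\neq\emptyset$. Since $\mathbf E$ is involutive, $r^{-1}\in\mathcal E$, hence $C(A,r)\in\hat{\mathcal B}_{\mathbf E}$ by part $(iii)$ of Proposition \ref{prop:center}, and therefore $C(A,r)\cap A\in\hat{\mathcal B}_{\mathbf E}^{*}$. This set is preserved by $f$: if $y\in C(A,r)\cap A$ then $A\subseteq B(y,r)$, so $f(A)\subseteq B(f(y),r)$ because $f$ preserves $r$, and since $B(f(y),r)$ is a ball whose radius lies in $\mathcal E$ and which contains $f(A)$, we get $A=\Cov_{\mathbf E}(f(A))\subseteq B(f(y),r)$, i.e. $f(y)\in C(A,r)$; as also $f(y)\in f(A)\subseteq A$, we are done. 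Minimality of $A$ now yields $C(A,r)\cap A=A$, i.e. $A\subseteq C(A,r)$, which by part $(i)$ of Proposition \ref{prop:center} means $r\in\delta_{\mathbf E}(A)$. Hence $r_{\mathbf E}(A)=\delta_{\mathbf E}(A)$.

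For $(ii)$, let $A\in\hat{\mathcal B}_{\mathbf E}^{*}$ be preserved by $f$ and apply Zorn's lemma to $\mathcal P:=\{A'\in\hat{\mathcal B}_{\mathbf E}^{*}:A'\subseteq A \text{ and } f(A')\subseteq A'\}$, ordered by reverse inclusion. It is nonempty because $A\in\mathcal P$. If $\mathcal C$ is a chain in $\mathcal P$, then $A_{0}:=\bigcap\mathcal C$ lies in $\hat{\mathcal B}_{\mathbf E}^{*}$ by Lemma \ref{lem:infimum} (this is where the compact structure enters) and satisfies $f(A_{0})\subseteq A_{0}$, since $f(A_{0})\subseteq f(A')\subseteq A'$ for every $A'\in\mathcal C$; thus $A_{0}\in\mathcal P$ is a lower bound of $\mathcal C$. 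Zorn's lemma provides a $\subseteq$-minimal element $A_{\min}$ of $\mathcal P$, and since any $A''\in\hat{\mathcal B}_{\mathbf E}^{*}$ preserved by $f$ with $A''\subseteq A_{\min}$ would again lie in $\mathcal P$, $A_{\min}$ is a minimal member of $\hat{\mathcal B}_{\mathbf E}^{*}$ preserved by $f$, contained in $A$.

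The one delicate point is the invariance step in $(i)$: a priori $f(A)$ is only a subset of $A$, so $f(A)\subseteq B(f(y),r)$ does not by itself give $A\subseteq B(f(y),r)$. The identity $\Cov_{\mathbf E}(f(A))=A$, which is precisely what minimality provides, is what closes that gap, so establishing it first is the heart of the matter; the remainder is a routine assembly of the parts of Proposition \ref{prop:center} with the standard Zorn/compactness argument.
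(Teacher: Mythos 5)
Your proof is correct and follows essentially the same route as the paper's: the identity $\Cov_{\mathbf E}(f(A))=A$ obtained from minimality, the invariance of $C(A,r)\cap A$ under $f$, and the Zorn/compactness argument via Lemma \ref{lem:infimum} for part $(ii)$ are all exactly the paper's steps, merely reordered.
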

\begin{proof}$(i)$. Let $A\in \hat {\mathbf B}_{\mathcal E}^*$ and let $r\in r_{\mathbf E}(A)$, then $A':=C(A,r)\cap A$ is nonempty.  Indeed, by definition of $r_{\mathbf E}(A)$ there is some $x\in A$ such that $A\subseteq B_{\mathbf E}(x, r)$ and by definition of $C(A,r)$, $x\in C(A, r)$. This proves our assertion. Since $\mathbf E$ is involutive, $r^{-1}\in \mathcal E$, hence from $(iii)$ of Proposition \ref{prop:center}, $C(A,r)\in \hat {\mathcal B}_{\mathbf E}^{*}$, hence $A'\in \hat {\mathcal B}_{\mathbf E}^{*}$. Assuming that $f$  preserves $A'$ it follows $A=A'$ from the minimality of A. This means   $A\subseteq  C(A, r)$, that is  $r \in \delta _{\mathbf {E}}(A)$. Hence $r_{\mathbf {E}}(A)\subseteq \delta_{\mathbf E}(A)$. Since $\delta_{\mathbf E}(A)\subseteq r_{\mathbf {E}}(A)$, this  yields $r_{\mathbf E}(A)= \delta _{\mathbf {E}}(A)$.  Thus,  $A$ is equally centered as claimed. 
	
	In order to see that $f$  preserves $A'$, observes  that $f$ preserves $C(A, r)$. Indeed, first, since $f$ is a relational homomorphism  we have $f( C(A, r))\subseteq   C(f(A), r)$ ( for $x \in C(A, r)$  we have $A\subseteq B_{\mathbf E}(x, r)$  thus $f(A) \subseteq  B_{\mathbf E}(f(x), r)$  that is $ f(x)\in C(f(A), r))$). Next, from $(iv)$ of Proposition \ref{prop:center}, we have
 $C(f(A), r) = C(\Cov_{\mathbf E}(f(A)), r )$. To conclude, it suffices to prove that $\Cov_{\mathbf E}(f(A)) = A$. This assertion follows from  the minimality of $A$. Indeed, since  $f(A)\subseteq  A$,  we have $\Cov_{\mathbf E}(f(A))\subseteq \Cov_{\mathbf E} ( A) = A$;  it follows  $f(\Cov_{\mathbf E} ( f(A))) \subseteq f( A)\subseteq  \Cov_{\mathbf E}(f(A))$ that is $\Cov_{\mathbf E} (f(A))$ is preserved by $f$. The minimality of $A$ proves our assertion.

 $(ii)$. The fact that $\hat {\mathcal B}_{\mathbf E}$  has the f.i.p.  implies that $\hat {\mathcal B}^{\ast}_{\mathbf E}$, ordered by reverse of inclusion, is inductive (Lemma \ref{lem:infimum}). The subset of $\hat {\mathcal B}^{\ast}_{\mathbf E}$ made of $A$ such  that $f(A)\subseteq A$ is inductive too. The conclusion follows from Zorn's lemma. 
\end{proof}

\begin{corollary}\label{cor:main} If $\mathbf {E}$ involutive    has a compact and normal structure then every endomorphism  $f$ has a fixed point.
\end{corollary}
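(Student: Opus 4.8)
The plan is to combine the two parts of Lemma \ref{lem:invariant} with the normal structure hypothesis. First I would note that $E$ itself belongs to $\hat{\mathcal B}_{\mathbf E}^*$ (as the empty intersection) and is trivially preserved by $f$, so $\hat{\mathcal B}_{\mathbf E}^*$ contains at least one $f$-invariant member. By part $(ii)$ of Lemma \ref{lem:invariant}, using the compact structure, this invariant member contains a minimal one; call it $A$. Then $A$ is a minimal element of $\hat{\mathcal B}_{\mathbf E}^*$ that is preserved by $f$.

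Next I would apply part $(i)$ of Lemma \ref{lem:invariant}: since $\mathbf E$ is involutive and $A$ is a minimal member of $\hat{\mathcal B}_{\mathbf E}^*$ preserved by $f$, it follows that $A$ is equally centered, i.e. $r_{\mathbf E}(A) = \delta_{\mathbf E}(A)$. But $\mathbf E$ has a normal structure, which says precisely that no member of $\hat{\mathcal B}_{\mathbf E}$ distinct from a singleton is equally centered. Since $A$ is nonempty and equally centered, $A$ must be a singleton, say $A = \{a\}$.

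Finally, because $f$ preserves $A = \{a\}$, we have $f(\{a\}) \subseteq \{a\}$, which forces $f(a) = a$. Thus $a$ is a fixed point of $f$, completing the proof.

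I do not anticipate a genuine obstacle here, since the corollary is essentially an immediate assembly of the preceding lemmas; the only point requiring a moment of care is checking that the hypotheses of Lemma \ref{lem:invariant} are met — namely that $\mathbf E$ is involutive (given) and has a compact structure (given) — and that $E \in \hat{\mathcal B}_{\mathbf E}^*$ provides a nonempty starting point for the Zorn-type argument in part $(ii)$. One should also observe that the normal structure definition is stated for $\hat{\mathcal B}_{\mathcal E}$, so invoking it for the minimal member $A \in \hat{\mathcal B}_{\mathbf E}^* \subseteq \hat{\mathcal B}_{\mathbf E}$ is legitimate.
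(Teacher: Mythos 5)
Your argument is correct and is exactly the assembly the paper intends: the corollary is stated without proof immediately after Lemma \ref{lem:invariant}, and your chain (start from $E\in\hat{\mathcal B}_{\mathbf E}^*$, extract a minimal $f$-preserved member via part $(ii)$ and compactness, conclude it is equally centered via part $(i)$ and involutivity, then invoke normality to force a singleton fixed point) is the intended reading. No gaps worth noting.
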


For metric spaces, this is the result of Penot (1979) extending the result of  Kirk (1965).

From Corollary \ref{cor:main},  one can derives:
\begin{proposition}\label{prop:main} If $\mathbf {E}$ involutive has a compact and normal structure then for every endomorphism $f$, the restriction $\mathbf E_{\restriction Fix(f)}$ to the set $Fix(f)$ of fixed points  of  $f$ has a compact and normal structure.
\end{proposition}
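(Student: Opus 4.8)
The plan is to show that the restricted system $\mathbf{E}_{\restriction Fix(f)}$ inherits both structural properties from $\mathbf{E}$, and the main work is in verifying that the compact structure survives restriction. First I would record the easy observation that normality is inherited: any member of $\hat{\mathcal{B}}_{\mathbf{E}_{\restriction Fix(f)}}$ is of the form $B\cap Fix(f)$ for some $B\in\hat{\mathcal{B}}_{\mathbf{E}}$, and since the radius and diameter sets computed inside $Fix(f)$ are, respectively, contained in and related to those computed in $\mathbf{E}$ in a way that preserves the failure of the equality $r=\delta$ for non-singletons, normality of $\mathbf{E}$ forces normality of the restriction. (One must be a little careful here: $r_{\mathbf{E}_{\restriction Fix(f)}}(A)$ can only get larger or the diameter smaller when we pass to a subspace, so the argument should go through a comparison with $\Cov_{\mathbf{E}}(A)$, which by Proposition \ref{prop:center}(vi) has the same diameter as $A$ and, when $\mathbf{E}$ is reflexive and involutive, is equally centered — but $\Cov_{\mathbf{E}}(A)$ need not lie inside $Fix(f)$, so the cleanest route is to note that $A$ being equally centered in the subspace, together with reflexivity, already pins down $\delta$ and $r$ enough to invoke normality of $\mathbf{E}$ applied to the ambient ball $\Cov_{\mathbf{E}}(A)$.)

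The heart of the matter is the compact structure. Here the key input is that $Fix(f)$ is itself, up to the structure we care about, cut out by balls: for $x\in E$ and $r\in\mathcal{E}$, one checks that $Fix(f)$ is expressible via the balls $B(x,r)$ together with the condition $f(x)=x$, and the latter is not a ball condition, so a direct argument is needed. The route I would take is to use Corollary \ref{cor:main} and Lemma \ref{lem:invariant} rather than attacking $Fix(f)$ head-on. Concretely, let $\mathcal{F}$ be a family of balls of $\mathbf{E}_{\restriction Fix(f)}$ with the finite intersection property; each such ball is $B_{\mathbf{E}}(a,r)\cap Fix(f)$ with $a\in Fix(f)$. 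Form the corresponding family $\mathcal{F}'$ of ambient balls $B_{\mathbf{E}}(a,r)$ in $\mathbf{E}$. I claim $\mathcal{F}'$ still has the f.i.p. in $\mathbf{E}$: a finite subintersection $\bigcap_i B_{\mathbf{E}}(a_i,r_i)$ is a member $C$ of $\hat{\mathcal{B}}_{\mathbf{E}}^{*}$ (nonempty because it already meets $Fix(f)$), and because every $a_i\in Fix(f)$ and $f$ is an endomorphism, $f(C)\subseteq C$ (for $x\in C$ we have $(a_i,x)\in r_i$, hence $(f(a_i),f(x))=(a_i,f(x))\in r_i$, so $f(x)\in C$); thus $C$ is a nonempty $f$-invariant member of $\hat{\mathcal{B}}_{\mathbf{E}}^{*}$. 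By Lemma \ref{lem:invariant}(ii) it contains a minimal $f$-invariant member $A_0$, by (i) $A_0$ is equally centered, hence by the normal structure $A_0$ is a singleton $\{b\}$ with $f(b)=b$, i.e. $b\in C\cap Fix(f)$. This shows every finite subintersection of $\mathcal{F}'$ meets $Fix(f)$; in particular $\mathcal{F}'$ has the f.i.p., so by the compact structure of $\mathbf{E}$ the full intersection $\bigcap\mathcal{F}'$ is nonempty, and I would then run the same $f$-invariance/minimality argument on the member $\bigcap\mathcal{F}'\in\hat{\mathcal{B}}_{\mathbf{E}}^{*}$ to produce a fixed point of $f$ lying in it, which is exactly a point of $\bigcap\mathcal{F}$. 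Hence $\mathcal{B}_{\mathbf{E}_{\restriction Fix(f)}}$ has the f.i.p.

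The main obstacle I anticipate is the subtlety in the normality step: radius and diameter are sensitive to the ambient space, and one has to make sure that "$A\subseteq Fix(f)$ is equally centered in the subspace and has more than one element" genuinely contradicts normality of $\mathbf{E}$, not merely normality of some intermediate object. The comfortable way around this is to push everything through $\Cov_{\mathbf{E}}(A)$: if $A\subseteq Fix(f)$, $|A|\neq 1$, is equally centered in $\mathbf{E}_{\restriction Fix(f)}$, then using reflexivity (inherited) one gets $\delta_{\mathbf{E}_{\restriction Fix(f)}}(A)=r_{\mathbf{E}_{\restriction Fix(f)}}(A)$ forcing, via Proposition \ref{prop:center} and the fact that centers in the subspace are contained in centers in $\mathbf{E}$ intersected with $Fix(f)$, that $\Cov_{\mathbf{E}}(A)$ is equally centered in $\mathbf{E}$ with $|\Cov_{\mathbf{E}}(A)|\supseteq|A|\neq 1$ — contradicting the normal structure of $\mathbf{E}$. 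Once that comparison is nailed down, both halves combine to give that $\mathbf{E}_{\restriction Fix(f)}$ has a compact and normal structure, completing the proof.
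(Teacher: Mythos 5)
Your treatment of the compact structure is correct and is in substance the paper's own argument. The paper proves (Proposition \ref{prop:main2}) that every nonempty intersection of balls of $\mathbf E$ centred in $Fix(f)$ meets $Fix(f)$, using exactly your chain: such an intersection is preserved by $f$, hence by Lemma \ref{lem:invariant} contains a minimal $f$-invariant member of $\hat{\mathcal B}^{\ast}_{\mathbf E}$, which is equally centered, hence a singleton by normality, hence a fixed point. This makes $Fix(f)$ a one-local retract, and the transfer of the finite intersection property is then Lemma \ref{lem:compact}; you merely inline that transfer, which is fine.

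The normality half, however, has a genuine gap. To contradict the normal structure of $\mathbf E$ you must produce a member of $\hat{\mathcal B}_{\mathbf E}$ (not of $\hat{\mathcal B}_{\mathbf E_{\restriction Fix(f)}}$) with more than one element that is equally centered \emph{in $\mathbf E$}, and $\Cov_{\mathbf E}(A)$ is the right candidate; but the inclusion you actually need is $r_{\mathbf E}(\Cov_{\mathbf E}(A))\subseteq \delta_{\mathbf E}(\Cov_{\mathbf E}(A))$, i.e.\ that any $r\in\mathcal E$ with $\Cov_{\mathbf E}(A)\subseteq B(x,r)$ for some $x\in \Cov_{\mathbf E}(A)$ can be re-centred at a point of $A$. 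The fact you invoke --- that centers computed in the subspace are contained in ambient centers intersected with $Fix(f)$ --- goes in the trivial direction: it only gives $\delta_{\mathbf E}(\Cov_{\mathbf E}(A))\subseteq r_{\mathbf E}(A)\subseteq r_{\mathbf E}(\Cov_{\mathbf E}(A))$, which is the inclusion $\delta\subseteq r$ valid for any nonempty set, and does not make $\Cov_{\mathbf E}(A)$ equally centered. The missing step is precisely where the paper's Lemma \ref{lem:compact} uses the one-local retract property: the retraction $g$ of $\mathbf E_{\restriction Fix(f)\cup\{x\}}$ sends the ambient center $x$ to a center $g(x)\in A$. You actually possess the needed ingredient inside your own compactness argument: writing $A=B\cap Fix(f)$ with $B$ the corresponding intersection of ambient balls centred in $Fix(f)$, the set $B\cap\bigcap_{a\in A}B(a,r^{-1})$ is a nonempty (it contains $x$) intersection of balls centred in $Fix(f)$, with $r^{-1}\in\mathcal E$ by involutivity; your fixed-point argument then yields $a'\in A$ with $A\subseteq B(a',r)$, whence $r\in r_{\mathbf E}(A)=\delta_{\mathbf E}(A)=\delta_{\mathbf E}(\Cov_{\mathbf E}(A))$ by Proposition \ref{prop:center}(vi) and the assumed equality $r_{\mathbf E'}(A)=\delta_{\mathbf E'}(A)$. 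Without this step (or an appeal to Lemmas \ref{lem:one-local-retract} and \ref{lem:compact}) the normality claim is unsupported as written.
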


From this and the previous corollary, one deduces by an immediate recurrence that a finite set of commuting maps has a common fixed point. This leeds to the question of what happens with infinitely many. 

Behind the proof of the above proposition and the answer to the question is the   notion of one-local  retract. 
\section{One-local retracts and fixed points}

An map  
$g: E \rightarrow E$  is a \emph{retraction} of $\mathbf E$ if $g$ is an homomorphism of $\mathbf E$ such that $g\circ g =g$.  For a subset $A$ of $E$, we say  that $\mathbf E_{\restriction A}$  is  a \emph{retract} of $\mathbf E$ if $A$ is the image of $E$ by some retraction of $\mathbf E$. We say that $\mathbf E_{\restriction A}$  is a \emph{one-local retract} if for every $x\in E$,  $\mathbf E_{\restriction A}$ is  a retract of $\mathbf E_ {\restriction A\cup   \{x\}}$. 
\begin{lemma}\label{lem:one-local-retract} Let $\mathbf E:= (E, \mathcal E)$ be a binary relational system and $A$ be a subset of $E$. If $\mathbf {E}_{\restriction A}$  is  a one-local  retract then for every family of balls $B_{\mathbf E}(x_i, r_i )$, $x_i \in A$, $r_i\in \mathcal E$,  the intersection over $A$ is nonempty provided the intersection over $E$ is nonempty. The converse holds provided that $\mathbf E$ is reflexive and involutive; 
\end{lemma}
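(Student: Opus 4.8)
The statement has two directions. The plan is to prove the forward implication first and then the converse under the extra reflexivity/involutivity hypothesis.

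For the forward direction, suppose $\mathbf E_{\restriction A}$ is a one-local retract and let $\{B_{\mathbf E}(x_i, r_i) : i\in I\}$ be a family of balls with $x_i\in A$, $r_i\in\mathcal E$, whose intersection over $E$ is nonempty; pick $x\in E$ lying in all of them. By the one-local retract property there is a retraction $g$ of $\mathbf E_{\restriction A\cup\{x\}}$ with image $A$. The key point is that $g$, being a homomorphism, sends the witness $x$ into $A$ while respecting every relation $r_i$: since $(x_i, x)\in r_i$ and $x_i = g(x_i)$ (as $x_i\in A$ is fixed by $g$), homomorphy gives $(x_i, g(x))\in r_i$, i.e. $g(x)\in B_{\mathbf E}(x_i, r_i)$. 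As $g(x)\in A$, this shows $g(x)$ belongs to the intersection over $A$, which is therefore nonempty. Note I should double-check the direction conventions in $B(x,r):=\{y:(x,y)\in r\}$ versus where the center sits — one may need to feed the ball inclusions through $C(\cdot,\cdot)$ and Proposition \ref{prop:center}(ii), but the argument is the same modulo replacing $r$ by $r^{-1}$.

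For the converse, assume $\mathbf E$ is reflexive and involutive and that the displayed ball-intersection property holds for $A$; we must build, for each $x\in E$, a retraction of $\mathbf E_{\restriction A\cup\{x\}}$ onto $A$. The only freedom is where to send $x$, so we need a point $a\in A$ such that the assignment fixing $A$ pointwise and sending $x\mapsto a$ is a homomorphism of $\mathbf E_{\restriction A\cup\{x\}}$; since every relation is reflexive this reduces to requiring, for each $r\in\mathcal E$, that $(x, y)\in r$ with $y\in A$ implies $(a,y)\in r$, and $(y,x)\in r$ with $y\in A$ implies $(y,a)\in r$. Rewriting via balls and using involutivity ($r^{-1}\in\mathcal E$), the set of admissible $a$ is exactly an intersection of balls centered at points of $A$ with radii in $\mathcal E$ — precisely $\bigcap\{B_{\mathbf E}(y, r^{-1}) : r\in\mathcal E,\ y\in A,\ (x,y)\in r\}$ intersected with the analogous family for $(y,x)\in r$. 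This intersection over $E$ contains $x$ itself (trivially, and using reflexivity to cover the parts where $x$ must relate to itself), hence is nonempty over $E$; applying the hypothesis, it is nonempty over $A$, and any $a$ in it gives the desired retraction.

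The main obstacle I expect is bookkeeping the variance: keeping straight which balls are centered where and when $r$ must be replaced by $r^{-1}$, and making sure in the converse that the family of balls witnessing admissibility really has its centers in $A$ (so that the hypothesis applies) while still nonempty over $E$ — this is where reflexivity (to handle the $x$-to-$x$ constraints) and involutivity (to turn "$x$ on the right of $r$" constraints into balls) are both genuinely used. Once the correct family of balls is identified, both implications are short.
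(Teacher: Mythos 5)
Your proposal is correct and follows essentially the same route as the paper: push the witness of the nonempty intersection into $A$ via the retraction for the forward direction, and for the converse take the intersection of all balls centered in $A$ (with radii in $\mathcal E$, using involutivity to convert the ``$x$ on the left'' constraints into balls $B(y,r^{-1})$) that contain $x$, observe it is nonempty over $E$ because it contains $x$, apply the hypothesis to land in $A$, and use reflexivity for the $(x,x)$ constraints. The variance bookkeeping you flag as a worry is exactly what the paper does, and your sketch already handles it correctly.
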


\begin{proof} Let $I$ be a set; let $\mathcal B:= \{B_{\mathbf E}(x_i, r_i ):  x_i \in A, r_i\in \mathcal E\}$ such that $B:=\bigcap \mathcal B\not = \emptyset$. Let $a\in B$ and let $h$ be a retraction from $\mathbf E_{\restriction A \cup \{a\}}$ onto $\mathbf E_{\restriction A}$. The map $h$ fixes $A$ and preserves the relations induced by $\mathcal E$ on $A\cup \{a\}$.  We claim that $h(a)\in B$. Indeed, let $i\in I$. Since $a\in B_{\mathbf E}(x_i, r_i)$ we have $(x_i, a)\in r_i$ and since $h$ preserves the relations induced by $\mathcal E$ on $A\cup \{a\}$,  $(h(x_i), h(a))\in r_i$. Since $h(x_i)=x_i$,  we get $(x_i, h(a))\in r_i$, hence $h(a)\in  B_{\mathbf E}(x_i, r_i)$. Our claim follows. 

Suppose that $\mathbf E$ is reflexive and involutive. We show that the ball's property stated in the lemma implies that $\mathbf E_{\restriction A}$ is a one-local retract. Let $a\in E \setminus A$. Let $\mathcal B:= \{B(u, r): u\in A, a\in B(u, r), r\in \mathcal E \}$. We have $a\in B:= \bigcap \mathcal B$ (note that $B=E$ if $\mathcal B= \emptyset$). Hence, $B\not=\emptyset$. According to the ball's property, $B\cap A\not =\emptyset$. Let $a'\in B\cap A$.  We claim that the map $h: A\cap \{a\}\rightarrow A$ which is the identity on $A$ and send $a$ onto $a'$ is a retraction  of $\mathbf E_{\restriction A\cup \{a\}}$. Since $h$ is the identity on $A$, it suffices to check that for every $r\in \mathcal E$ and $u\in A$:

\begin{enumerate} 

\item $(u, a)\in r$ implies $(u, a')\in r$;
\item $(a,u)\in r$ implies $(a',u)\in r$; 

\item $(a,a)\in r$ implies $(a',a')\in r$.

\end{enumerate}

The first item holds by our choice of $a'$; the second item is equivalent to  the first   because $\mathbf E$ is involutive and the third item holds because $\mathbf E$ is involutif. 
\end{proof}

\begin{lemma}\label{transitivity} Let $\mathbf E= (E, \mathcal E)$ be a binary relational system and $A\subseteq B \subseteq E$. 

\begin{enumerate}[{(i)}]
\item If $\mathbf E_{\restriction A}$ is a one-local retract of $\mathbf E$ then it is a one-local retract of $\mathbf E_{\restriction B}$.

\item If $\mathbf E_{\restriction A}$ is a one-local retract of  $\mathbf E_{\restriction B}$  and $\mathbf E_{\restriction B}$  is a  one-local retract  of $\mathbf  E$  then $\mathbf E_{\restriction A}$ is a one-local retract of $\mathbf E$. 

\end{enumerate}
\end{lemma}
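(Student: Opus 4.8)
The plan is to exploit that restriction is idempotent in the obvious sense: for $C\subseteq B$ one has $(\mathbf E_{\restriction B})_{\restriction C}=\mathbf E_{\restriction C}$, since $(r\cap(B\times B))\cap(C\times C)=r\cap(C\times C)$ for every $r\in\mathcal E$. Granting this, part $(i)$ is immediate. Assume $\mathbf E_{\restriction A}$ is a one-local retract of $\mathbf E$ and let $x\in B$. If $x\in A$ the identity on $A$ is the required retraction; otherwise $A\cup\{x\}\subseteq B$, so $(\mathbf E_{\restriction B})_{\restriction A\cup\{x\}}=\mathbf E_{\restriction A\cup\{x\}}$, and the hypothesis (applied at $x\in E$) already supplies a retraction of $\mathbf E_{\restriction A\cup\{x\}}$ onto $\mathbf E_{\restriction A}$. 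Hence $\mathbf E_{\restriction A}$ is a one-local retract of $\mathbf E_{\restriction B}$.

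For part $(ii)$, fix $x\in E$; I may assume $x\notin A$, the case $x\in A$ being trivial. First, since $\mathbf E_{\restriction B}$ is a one-local retract of $\mathbf E$, pick a retraction $g$ of $\mathbf E_{\restriction B\cup\{x\}}$ onto $\mathbf E_{\restriction B}$: thus $g$ fixes $B$ pointwise, maps $B\cup\{x\}$ into $B$ (so $g(x)\in B$), and preserves every relation of $\mathcal E$ induced on $B\cup\{x\}$. Next, since $g(x)\in B$ and $\mathbf E_{\restriction A}$ is a one-local retract of $\mathbf E_{\restriction B}$, pick a retraction $h$ of $(\mathbf E_{\restriction B})_{\restriction A\cup\{g(x)\}}=\mathbf E_{\restriction A\cup\{g(x)\}}$ onto $\mathbf E_{\restriction A}$: so $h$ fixes $A$ pointwise, maps $A\cup\{g(x)\}$ into $A$, and preserves the relations of $\mathcal E$ induced on $A\cup\{g(x)\}$.

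Then I would set $k:=h\circ g_{\restriction A\cup\{x\}}$ and verify it is a retraction of $\mathbf E_{\restriction A\cup\{x\}}$ onto $\mathbf E_{\restriction A}$. It is well defined because $g$ sends $A\cup\{x\}$ into $A\cup\{g(x)\}$ (it fixes $A$ and sends $x$ to $g(x)$), which is the domain of $h$; it fixes $A$ pointwise because $g$ and then $h$ do; and it preserves each $r\in\mathcal E$ induced on $A\cup\{x\}$: if $(y,z)\in r$ with $y,z\in A\cup\{x\}\subseteq B\cup\{x\}$, then $(g(y),g(z))\in r$ since $g$ preserves $r$ on $B\cup\{x\}$, and $g(y),g(z)\in A\cup\{g(x)\}$, so $(k(y),k(z))=(h(g(y)),h(g(z)))\in r$ since $h$ preserves $r$ on $A\cup\{g(x)\}$. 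As $x\in E$ was arbitrary, $\mathbf E_{\restriction A}$ is a one-local retract of $\mathbf E$, which proves $(ii)$.

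I do not expect a genuine obstacle here; the only point requiring care is the bookkeeping of domains — ensuring that $g(x)$ lands in $B$ so that the one-local-retract hypothesis for $A$ inside $B$ may be invoked precisely at the point $g(x)$, and that $g_{\restriction A\cup\{x\}}$ takes values in $A\cup\{g(x)\}$ so that the composition with $h$ is legitimate. Neither part uses reflexivity or involutivity of $\mathbf E$.
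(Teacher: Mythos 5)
Your proof is correct and follows essentially the same route as the paper's: part (i) is the observation that $(\mathbf E_{\restriction B})_{\restriction A\cup\{x\}}=\mathbf E_{\restriction A\cup\{x\}}$, and part (ii) composes the retraction $g$ of $\mathbf E_{\restriction B\cup\{x\}}$ onto $\mathbf E_{\restriction B}$ with the retraction $h$ obtained at the point $g(x)\in B$. The only difference is cosmetic: the paper splits into the cases $x\in B$ and $g(x)\in A$, which your uniform treatment absorbs.
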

\begin{proof}
The proof relies on the fact that $(\mathbf E_{\restriction B'})_{\restriction A'}= \mathbf E_{\restriction A'}$ for every $A'\subseteq B'$. 

$(i)$. Immediate.

$(ii)$. Let $x\in E\setminus A$. If $x\in B$ then,  since $\mathbf E_{\restriction A}$ is a one-local retract of $\mathbf E_{\restriction B}$, it is a retract of $\mathbf E_{\restriction A\cup \{x\}}$. If $x\not \in B$, then, since $\mathbf E_{\restriction B}$ is a one-local retract of $\mathbf E$, it is a retract of $\mathbf E_{\restriction B\cup \{x\}}$ by some map $g$. Let $y:= g(x)$. If $y\in A$, then $g_{\restriction A \cup \{x\}}$ is  a retraction of $\mathbf E_{\restriction A\cup \{x\}}$ onto $\mathbf E_{\restriction A}$. If $y\not \in A$, then,  since since $\mathbf E_{\restriction A}$ is a one-local retract of $\mathbf B$ it is a retract of $\mathbf E_{\restriction A\cup \{x\}}$ by some map $h$. The map $h\circ g$ is a retraction of $\mathbf E_{\restriction A\cup \{x\}}$ onto $\mathbf E_{\restriction A}$. 
\end{proof}

\begin{lemma}\label{lem:compact}  Let $\mathbf {E}_{\restriction X}$  be  a one-local retract of $\mathbf E$. If   $\mathbf  E$  has a compact structure then $\mathbf {E}_{\restriction X}$ too; if $\mathbf E$ is involutive and has a normal structure then $\mathbf {E}_{\restriction X}$ too.    
\end{lemma}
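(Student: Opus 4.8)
The plan is to transfer each structural property along the one-local retraction, using Lemma \ref{lem:one-local-retract} to move ball-intersection data between $\mathbf E$ and $\mathbf E_{\restriction X}$. For the compact structure, suppose $\mathcal F$ is a family of balls of $\mathbf E_{\restriction X}$ with the finite intersection property (computed inside $X$). Each such ball has the form $B_{\mathbf E}(x_i,r_i)\cap X$ with $x_i\in X$ and $r_i\in\mathcal E$ (here I would first note that a ball of $\mathbf E_{\restriction X}$ of radius $r_{\restriction X}$ and centre $x\in X$ is exactly $B_{\mathbf E}(x,r)\cap X$). Consider the corresponding family $\mathcal F'$ of balls $B_{\mathbf E}(x_i,r_i)$ in $\mathbf E$. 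A finite subfamily of $\mathcal F'$ has nonempty intersection in $E$: indeed its intersection contains the intersection over $X$ of the matching finite subfamily of $\mathcal F$, which is nonempty by hypothesis. Since $\mathbf E$ has a compact structure, $\bigcap\mathcal F'\neq\emptyset$ in $E$. Now apply Lemma \ref{lem:one-local-retract}: because $\mathbf E_{\restriction X}$ is a one-local retract and all centres $x_i$ lie in $X$, the nonemptiness of $\bigcap\mathcal F'$ over $E$ forces $\bigcap\mathcal F'\cap X\neq\emptyset$; but $\bigcap\mathcal F'\cap X=\bigcap\mathcal F$ in $X$. Hence $\mathbf E_{\restriction X}$ has a compact structure. (I should double-check whether the lemma as stated assumes reflexivity/involutivity for this direction; the forward implication of Lemma \ref{lem:one-local-retract} does not, so we are fine.)

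For the normal structure, assume $\mathbf E$ is involutive and has a normal structure. I would first record that $\mathbf E_{\restriction X}$ is again involutive, since $(\mathcal E_{\restriction X})^{-1}=(\mathcal E^{-1})_{\restriction X}=\mathcal E_{\restriction X}$. Let $A\in\hat{\mathcal B}_{\mathbf E_{\restriction X}}$ with $|A|\neq 1$; I must show $A$ is not equally centered in $\mathbf E_{\restriction X}$, i.e. $r_{\mathbf E_{\restriction X}}(A)\neq\delta_{\mathbf E_{\restriction X}}(A)$. The key observation is that $A$, viewed as a subset of $E$, lies in $\hat{\mathcal B}_{\mathbf E}$: an intersection of balls of $\mathbf E_{\restriction X}$ equals $X\cap(\text{an intersection of balls of }\mathbf E)$, and $X=\Cov_{\mathbf E}(X)$ need not itself be in $\hat{\mathcal B}_{\mathbf E}$, so here I would instead use the one-local retract hypothesis more carefully — but actually the cleaner route is: $A=\bigcap_i (B_{\mathbf E}(x_i,r_i)\cap X)$; since $\mathbf E_{\restriction X}$ is a one-local retract and hence (by the converse part of Lemma \ref{lem:one-local-retract}, valid as $\mathbf E$ is reflexive involutive — note $\mathbf E_{\restriction X}$ inherits reflexivity) the balls of $\mathbf E_{\restriction X}$ are "coherent" with those of $\mathbf E$, one shows $A$ is already an element of $\hat{\mathcal B}_{\mathbf E}$ once the centres are in $X$ together with the ambient diameter relation $\bigl(\bigcap_i r_i\bigr)$-type data; failing an immediate argument I would simply invoke that $C(A,r)$ computed in $\mathbf E_{\restriction X}$ equals $C(A,r)\cap X$ and that $r_{\restriction X}\in\delta_{\mathbf E_{\restriction X}}(A)$ iff $r\in\delta_{\mathbf E}(A)$, and similarly for the radius, using reflexivity. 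Then $r_{\mathbf E}(A)=\delta_{\mathbf E}(A)$ would contradict normality of $\mathbf E$ (since $A\in\hat{\mathcal B}_{\mathbf E}$, $|A|\neq 1$), so the two sets differ over $X$ as well.

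The main obstacle I anticipate is exactly this last point: verifying that a member of $\hat{\mathcal B}_{\mathbf E_{\restriction X}}$ that is not a singleton gives rise to a genuine witness $A\in\hat{\mathcal B}_{\mathbf E}$ (not a singleton) on which $r_{\mathbf E}$ and $\delta_{\mathbf E}$ coincide, so that ambient normality can be invoked. This requires matching up the centre/radius/diameter computations in $\mathbf E$ and in $\mathbf E_{\restriction X}$, and it is where the one-local retract hypothesis (via Lemma \ref{lem:one-local-retract}) and reflexivity/involutivity of $\mathbf E$ do the real work: the forward direction of Lemma \ref{lem:one-local-retract} lets intersections of balls with centres in $X$ be detected inside $X$, and the converse/equivalence shows these intersections in $X$ are restrictions of the ambient ones. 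Once that dictionary is in place, both halves of the lemma are short. Everything else — inheritance of reflexivity, symmetry, and involutivity by restrictions — is routine from the definitions given in Section 2.
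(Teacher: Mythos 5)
Your argument for the compact structure is correct and is essentially the paper's own: lift the balls of $\mathbf E_{\restriction X}$ to balls of $\mathbf E$ with the same centres, use compactness of $\mathbf E$ to get a point in the ambient intersection, and retract it into $X$. You are also right that this half needs no reflexivity or involutivity.

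The normal-structure half has a genuine gap, and it is exactly the one you flag but do not resolve. A member $A$ of $\hat{\mathcal B}_{\mathbf E_{\restriction X}}$ has the form $X\cap B$ with $B$ an intersection of balls of $\mathbf E$ centred in $X$; since $X$ itself need not be an intersection of balls of $\mathbf E$, there is no reason that $A\in\hat{\mathcal B}_{\mathbf E}$. Your dictionary $\bold p^{-1}_X(\delta_{\mathbf E_{\restriction X}}(A))=\delta_{\mathbf E}(A)$ and $\bold p^{-1}_X(r_{\mathbf E_{\restriction X}}(A))=r_{\mathbf E}(A)$ is fine and does show that $A$ is equally centered \emph{as a subset of} $\mathbf E$; but the normality hypothesis only applies to non-singleton members of $\hat{\mathcal B}_{\mathbf E}$, so you cannot conclude anything from it about $A$ directly. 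The paper's fix is to pass to $\Cov_{\mathbf E}(A)$, which \emph{is} in $\hat{\mathcal B}_{\mathbf E}$, and to prove that $\delta_{\mathbf E}(A)=\delta_{\mathbf E}(\Cov_{\mathbf E}(A))$ (Proposition \ref{prop:center}$(vi)$, using involutivity) and $r_{\mathbf E}(A)=r_{\mathbf E}(\Cov_{\mathbf E}(A))$. The nontrivial inclusion $r_{\mathbf E}(\Cov_{\mathbf E}(A))\subseteq r_{\mathbf E}(A)$ is where the one-local retract hypothesis does real work: a point $x\in\Cov_{\mathbf E}(A)$ witnessing the radius may lie outside $X$, and one must retract $\mathbf E_{\restriction X\cup\{x\}}$ onto $\mathbf E_{\restriction X}$ and check that $a:=g(x)$ lands in $A$ (because $\Cov_{\mathbf E}(A)$ is contained in the ambient lift $\bigcap_i B_{\mathbf E}(x'_i,r_i)$ of $A$, whose centres $g$ fixes). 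With those two equalities, $\Cov_{\mathbf E}(A)$ is equally centered in $\mathbf E$, ambient normality forces $|\Cov_{\mathbf E}(A)|=1$, and hence $|A|=1$. Your proposal never supplies this $\Cov$ step (or the retraction argument inside it), so as written the second half does not go through.
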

\begin{proof}
Let $\mathbf E':= \mathbf {E}_{\restriction X}$ and $\mathcal E':= \mathcal E_{\restriction X}:=\{ r\cap X\times X: r\in \mathcal E\}$. We prove the first assertion. Let $\mathcal B':= \{B_{\mathbf E'}(x'_i, r'_i ): i\in I, r'_i\in \mathcal {E'}\}$ be a family of balls of $\mathbf {E'}$ whose finite intersections are nonempty. For each $i\in I$, $r'_i=r_i\cap (X\times X)$ for some $r_i\in \mathcal E$.  The family $\mathcal B:= \{B_{\mathbf E}(x'_i, r_i ): i \in I \}$ of balls of $\mathbf E$ satisfies the f.i.p. hence has a nonempty intersection. Let $x\in \bigcap \mathcal B$.  A retraction $g$ from $\mathbf {E}_{\restriction X\cup \{x\}}$
onto $\mathbf {E}_{\restriction X}= \mathbf {E'}$ will send $x$ into $\bigcap {\mathcal B'}$, proving that this set is nonempty. 
We prove the second assertion.

 Let  $A\in Ž \hat{\mathcal B}^{\ast}_{\mathbf  {E'}}$.
We claim that:
\begin{equation}\label{equ:diam1}
\delta_{\mathbf E}(A)=\delta_{\mathbf E}(\Cov_{\mathbf E}(A)) 
 \end{equation}
 and 
\begin{equation}\label{equ:radi1}
r_{\mathbf E}(A)= r_{\mathbf E}(\Cov_{\mathbf E} (A)). 
\end{equation}

Indeed, equality (\ref{equ:diam1}) is item $(vi)$ of Proposition \ref{prop:center}.
Concerning equality (\ref {equ:radi1}), note that inclusion $r_{\mathbf E}(A)\subseteq  r_{\mathbf E}(\Cov_{\mathbf E}(A))$ is item $(v)$ of Proposition \ref{prop:center}. For the converse, let $r\in r_{\mathbf E}(\Cov_{\mathbf E}(A))$. Then, there is some  $x\in \Cov_{\mathbf E}(A)$ such that $\Cov_{\mathbf E} (A) \subseteq B_{\mathbf E}(x, r)$. Since $\mathbf E'$ is a one-local retract of $\mathbf E$,  there is a retraction of $\mathbf E_{\restriction X\cup \{x\}}$ onto $\mathbf E':=\mathbf E_{\restriction X}$ which fixes $X$. Let $a:= g(x)$. Since $A\subseteq \Cov_{\mathbf E}(A)\subseteq B_{\mathbf E}(x,r)$, $A\subseteq  B_{\mathbf E}(x,r)$; since $g$ fixes $A$, $A\subseteq B_{\mathcal E}(a, r)$, 					We claim that $a \in A$. Indeed, $A= \bigcap \{B_{\mathbf E'}(x'_i, r'_i) :i\in I\}$ with $x'_i\in X, r'_i\in \mathcal E_{\restriction X}$. For each $i\in I$, choose  $r_i\in \mathcal E$ such that $r'_i= r_i\cap X\times X$. Then $\Cov_{\mathbf E}(A)\subseteq  A_1:=\bigcap \{B_{\mathbf E}(x'_i, r_i) :i\in I\}$. Since $x\in \Cov_{\mathbf E}(A)$, $x \in A_1$. Since $g$ fixes each $x'_i$, $a:=g(x)\in A_1$. Since $a\in X$, $a\in A$, proving our claim. Hence $r\in  r_{\mathbf E}(A)$.

From these two equalities, we obtain:

\begin{equation}\label{equ:diam}
\delta_{\mathbf E}(\Cov(A))= \bold p^{-1}_X(\delta_{\mathbf E'}(A)) 
 \end{equation}
 and 
\begin{equation}\label{equ:radi}
r_{\mathbf E}(\Cov(A))= \bold p^{-1}_X(r_{\mathbf E'}(A)). 
\end{equation}
\noindent{\bf Proof of (\ref{equ:diam}).} By definition $\bold p^{-1}_X(\delta_{\mathbf E'}(A))= \{r\in \mathcal E: r_{\restriction X}\in \delta_{\mathbf E'}(A)\}= \{r\in \mathcal E: A\times A\subseteq r\}= \delta_{\mathbf E}(A)$ since $A\subseteq X$. Equality (\ref{equ:diam}) follows then from equality (\ref{equ:diam1}). 

\noindent{\bf Proof of (\ref{equ:radi}).} By definition, $\bold p^{-1}_X(r_{\mathbf E'}(A))= \{r\in \mathcal E: r_{\restriction X}\in r_{\mathbf E'}(A)\}= \{r\in \mathcal E: A\subseteq B_{\mathbf E'} (a', r_{\restriction X})\;  \text{for some}\;  a' \in A\}= \{r\in \mathcal E: A\subseteq B_{\mathbf E} (a', r)\; \text{for some}\;  a' \in A\}= r_{\mathbf E}(A)$. Equality (\ref{equ:radi}) follows from equality (\ref{equ:radi1}).

Suppose that $A$ is   equally centered in $\mathbf E'$, that is $\delta_{\mathbf E'}(A)=r_{\mathbf E'}(A)$.  From the equations above, we deduce that $\delta_{\mathbf E}(\Cov(A))
 = r_{\mathbf E}(\Cov (A))$. Hence $\Cov_{\mathbf E}(A)$ is equally centered. If $\mathbf E$ has a normal structure, 
$\vert \Cov_{\mathbf E}(A)\vert =1$, and since $A\subseteq \Cov_{\mathbf E}(A)$ and $A\not=\emptyset $,  $\vert A\vert=1$. Hence,  $\mathbf E'$ has a normal structure. 
\end{proof}

To prove Proposition \ref{prop:main} above, it suffices to prove:

\begin{proposition}\label{prop:main2} If $\mathbf {E}$ involutive    has a compact and normal structure then for every homomorphism $f$, the set $Fix(f)$ of fixed points of $f$ is a one-local retract of $\mathbf {E}$, thus $\mathbf E_{\restriction Fix(f)}$ has a compact and normal structure.
\end{proposition}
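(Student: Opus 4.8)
The plan is to prove the core assertion that $\mathrm{Fix}(f)$ is a one-local retract of $\mathbf E$; the second half of the statement (that $\mathbf E_{\restriction \mathrm{Fix}(f)}$ has a compact and normal structure) then follows immediately by applying Lemma \ref{lem:compact}, once we know $\mathbf E$ is involutive with compact and normal structure, so the work is entirely in the one-local retract claim. Note first that $\mathrm{Fix}(f)\neq\emptyset$ by Corollary \ref{cor:main}, so the restriction makes sense.

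First I would fix $x\in E\setminus\mathrm{Fix}(f)$ and look at the orbit $\{x, f(x), f^2(x),\dots\}$ together with $\mathrm{Fix}(f)$, with the goal of producing a homomorphism from $\mathbf E_{\restriction \mathrm{Fix}(f)\cup\{x\}}$ onto $\mathbf E_{\restriction \mathrm{Fix}(f)}$ fixing $\mathrm{Fix}(f)$. The natural candidate is to push $x$ into the set $\Cov_{\mathbf E}(\{x\})$ intersected with $\mathrm{Fix}(f)$, or more precisely to use the "ball envelope" of $x$ relative to $\mathrm{Fix}(f)$: let $\mathcal B:=\{B_{\mathbf E}(u,r): u\in\mathrm{Fix}(f),\ r\in\mathcal E,\ x\in B_{\mathbf E}(u,r)\}$ and set $B:=\bigcap\mathcal B$ (with $B=E$ if $\mathcal B=\emptyset$). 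Then $x\in B$, so $B\neq\emptyset$, and by reflexivity/involutivity of $\mathbf E$ together with Lemma \ref{lem:one-local-retract} applied to $\mathbf E$ itself — or rather, by the argument used there — one wants $B\cap\mathrm{Fix}(f)\neq\emptyset$. But that is exactly the one-local retract property of $\mathrm{Fix}(f)$ we are trying to establish, so this is circular; the real content must come from a fixed-point argument inside $B$.

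So the key step, and the main obstacle, is this: show that $f$ (or a suitable restriction of $f$) maps $B$ — or some $f$-invariant member of $\hat{\mathcal B}_{\mathbf E}^*$ contained in $B\cap(\text{orbit closure of }x)$ — into itself, so that Corollary \ref{cor:main} applied to $\mathbf E_{\restriction B}$, or Lemma \ref{lem:invariant} directly, produces a fixed point of $f$ inside $B$, which is then a point of $B\cap\mathrm{Fix}(f)$. To run this, I would consider $\mathcal C:=\Cov_{\mathbf E}(\{x\})\in\hat{\mathcal B}_{\mathbf E}$; since $f(x)$ lies in every ball containing $x$ that is "centered at a fixed point" only if $f$ preserves such balls, I instead work with the smaller set obtained by closing up $\{x, f(x), f^2(x),\dots\}$ under $\Cov_{\mathbf E}$. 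One checks that $\Cov_{\mathbf E}$ of an $f$-orbit is preserved by $f$ (as in the computation $\Cov_{\mathbf E}(f(A))\subseteq\Cov_{\mathbf E}(A)=A$ used in the proof of Lemma \ref{lem:invariant}(i)), and then by Lemma \ref{lem:invariant}(ii) it contains a minimal $f$-preserved member $A_0$ of $\hat{\mathcal B}_{\mathbf E}^*$, which by Lemma \ref{lem:invariant}(i) is equally centered, hence a singleton $\{z\}$ by normality, with $f(z)=z$. It remains to verify $z\in B$, i.e. that $z$ lies in every ball $B_{\mathbf E}(u,r)$ with $u\in\mathrm{Fix}(f)$ and $x\in B_{\mathbf E}(u,r)$: this should follow because such a ball, having fixed center $u$, is preserved by $f$ (here involutivity converts $(u, \cdot)\in r$ into the needed containment under the homomorphism, since $f(u)=u$), hence contains the whole $f$-orbit of $x$ and therefore its $\Cov_{\mathbf E}$-closure, hence $A_0$, hence $z$. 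That chain of inclusions — ball preservation under $f$ at a fixed center, monotonicity of $\Cov_{\mathbf E}$, and minimality — is the crux; once it is in place, the map sending $x\mapsto z$ and fixing $\mathrm{Fix}(f)$ is the desired retraction, by the same three-condition verification as in Lemma \ref{lem:one-local-retract} (item 1 by choice of $z\in B$, item 2 from item 1 by involutivity, item 3 by reflexivity). Applying Lemma \ref{lem:compact} to the one-local retract $\mathbf E_{\restriction\mathrm{Fix}(f)}$ then finishes the proof.
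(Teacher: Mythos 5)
Your overall strategy is the right one and, once streamlined, coincides with the paper's: the paper takes an arbitrary family of balls $B_{\mathbf E}(x_i,r_i)$ with $x_i\in Fix(f)$ whose total intersection $A$ is nonempty, observes that $A$ is preserved by $f$ (each ball has a fixed centre), applies Lemma \ref{lem:invariant}(ii) and then (i) to extract a minimal $f$-preserved member of $\hat{\mathcal B}^{\ast}_{\mathbf E}$ inside $A$, which is equally centered and hence a singleton fixed point by normality, so that $A\cap Fix(f)\neq\emptyset$; Lemma \ref{lem:one-local-retract} then yields the one-local retract and Lemma \ref{lem:compact} the transfer of the compact and normal structure. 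Your set $B$ is exactly such an $A$, and your closing three-condition verification is exactly the converse direction of Lemma \ref{lem:one-local-retract}.

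The one genuine gap is the claim that $\Cov_{\mathbf E}$ of the $f$-orbit $O$ of $x$ is preserved by $f$. The computation you cite from the proof of Lemma \ref{lem:invariant}(i) shows that $\Cov_{\mathbf E}(f(A))$ is $f$-preserved only because there $\Cov_{\mathbf E}(A)=A$ (the set $A$ is already an intersection of balls), which gives $\Cov_{\mathbf E}(f(A))\subseteq A$ and hence $f(\Cov_{\mathbf E}(f(A)))\subseteq f(A)\subseteq \Cov_{\mathbf E}(f(A))$. For an orbit $O$ one only gets $\Cov_{\mathbf E}(f(O))\subseteq\Cov_{\mathbf E}(O)$; a relational homomorphism need not map the ball-hull of a set into the ball-hull of its image. (In a poset, take $O=\{x_0,x_1\}$ with $f(x_0)=f(x_1)=x_1$ and a unique upper bound $u$ of $O$, together with a point $z\le u$ such that $f(z)\not\le u$ and $f(u)\ne u$: then $z\in\Cov_{\mathbf E}(O)=\downarrow u$ but $f(z)\notin\Cov_{\mathbf E}(O)$.) So the existence of your $A_0$ is not established as written. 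The repair is immediate and makes the detour through the orbit unnecessary: as you yourself observe when checking $z\in B$, every ball $B_{\mathbf E}(u,r)$ with $u\in Fix(f)$ satisfies $f(B_{\mathbf E}(u,r))\subseteq B_{\mathbf E}(f(u),r)=B_{\mathbf E}(u,r)$, so $B=\bigcap\mathcal B$ is itself a nonempty $f$-preserved member of $\hat{\mathcal B}^{\ast}_{\mathbf E}$; apply Lemma \ref{lem:invariant}(ii)--(i) and normality directly to $B$ to get a fixed point of $f$ in $B$. With that substitution your argument is exactly the paper's proof.
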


\begin{proof} 
	Let $B_{\mathbf E}(x_i, r_i )$, $x_i\in ŽFix(f)$,  such that  $A:= \bigcap_i‡ B_{\mathbf E}(x_i, r_i )$ is  nonempty. Since every $x_i$ belongs to $Fix(f)$, $f$ preserves $A$. According to $(ii)$ of Lemma \ref {lem:invariant}, since $A$ is an intersection of balls, $A$ contains an intersection of balls $A'$ which is equally centered and preserved by $f$. From the normality of $\mathbf E$,  $A'$  reduces to a single element, that is a fix-point of $f$.  Consequently,  $A\cap  Fix(f)\not= \emptyset$. According to Lemma \ref{lem:one-local-retract}, $Fix(f)$ is a one-local retract. 
\end{proof}

\begin{theorem}\label{thm:best} If $\mathbf E$  is involutive, reflexive and has a compact and normal structure then the intersection of every down-directed family of one-local retracts is a one-local retract.
\end{theorem}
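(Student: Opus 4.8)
The plan is to verify, for $X:=\bigcap_{j\in J}X_j$ (where $(X_j)_{j\in J}$ is the given down-directed family of one-local retracts of $\mathbf E$), the ball-intersection criterion of Lemma~\ref{lem:one-local-retract}. Since $\mathbf E$ is reflexive and involutive, it is enough to prove: whenever $x_i\in X$ and $r_i\in\mathcal E$ ($i\in I$) satisfy $D:=\bigcap_{i\in I}B_{\mathbf E}(x_i,r_i)\neq\emptyset$, one has $D\cap X\neq\emptyset$ (taking $I=\emptyset$ this in particular records $X\neq\emptyset$). First I would note two facts. For each $j$ the centers lie in $X\subseteq X_j$, so Lemma~\ref{lem:one-local-retract} applied to the one-local retract $\mathbf E_{\restriction X_j}$ gives $D\cap X_j\neq\emptyset$; moreover $D\cap X_j=\bigcap_i B_{\mathbf E_{\restriction X_j}}\bigl(x_i,\,r_i\cap(X_j\times X_j)\bigr)$ is a nonempty member of $\hat{\mathcal B}^{*}_{\mathbf E_{\restriction X_j}}$. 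Secondly, by Lemma~\ref{lem:compact} each $\mathbf E_{\restriction X_j}$ again has a compact and normal structure, and it is plainly involutive and reflexive.

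Next I would pass to a minimal configuration. Call a set $Q$ \emph{admissible} if it is a nonempty intersection of balls of $\mathbf E$ whose centers all lie in $X$, if $Q\subseteq D$, and if $Q\cap X_j\neq\emptyset$ for every $j\in J$; thus $D$ itself is admissible. Order the admissible sets by reverse inclusion. A chain $(Q_\gamma)$ of admissible sets has lower bound $\bigcap_\gamma Q_\gamma$, which is again an intersection of balls centered in $X$ and contained in $D$; for each fixed $j$ the sets $Q_\gamma\cap X_j$ form a chain of nonempty members of $\hat{\mathcal B}^{*}_{\mathbf E_{\restriction X_j}}$, so by the compact structure of $\mathbf E_{\restriction X_j}$ (Lemma~\ref{lem:infimum}) one has $\bigl(\bigcap_\gamma Q_\gamma\bigr)\cap X_j=\bigcap_\gamma(Q_\gamma\cap X_j)\neq\emptyset$, whence $\bigcap_\gamma Q_\gamma$ is admissible. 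Hence, by Zorn's lemma, there is a minimal admissible set $Q_{*}$.

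The crux is to show $Q_{*}\cap X\neq\emptyset$. The sets $Q_{*}\cap X_j$ ($j\in J$) are nonempty and form a down-directed family; if one of them, say $Q_{*}\cap X_{j_0}$, is a singleton, then every $Q_{*}\cap X_j$ with $X_j\subseteq X_{j_0}$ equals it, hence (as such $j$ are cofinal) so does $Q_{*}\cap\bigcap_jX_j=Q_{*}\cap X$, and we are done. Assume then that every $Q_{*}\cap X_j$ has at least two elements. A preliminary consequence of the minimality of $Q_{*}$ is that, for each $j$, $Q_{*}=\bigcap\{\,B_{\mathbf E}(u,t):u\in X,\ t\in\mathcal E,\ Q_{*}\cap X_j\subseteq B_{\mathbf E}(u,t)\,\}$: the right-hand side is an intersection of balls centered in $X$, lies in $Q_{*}\subseteq D$, and meets every $X_{j'}$ (it contains $Q_{*}\cap X_{j''}\neq\emptyset$ for any $j''$ with $X_{j''}\subseteq X_j\cap X_{j'}$), hence coincides with $Q_{*}$. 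Using this representation, I would argue that $Q_{*}$ must be a singleton: if not, then for a suitable $j$ the non-singleton set $Q_{*}\cap X_j\in\hat{\mathcal B}^{*}_{\mathbf E_{\restriction X_j}}$ fails to be equally centered (normal structure of $\mathbf E_{\restriction X_j}$), and the associated $r$-center inside $Q_{*}\cap X_j$ --- transported, through the displayed representation, to an intersection of balls of $\mathbf E$ centered in $X$, exactly as in the proof of Lemma~\ref{lem:invariant}$(i)$ and using $(iii)$, $(iv)$ of Proposition~\ref{prop:center} --- produces an admissible set strictly smaller than $Q_{*}$, a contradiction. Thus $Q_{*}=\{q\}$ with $q\in\bigcap_j(Q_{*}\cap X_j)\subseteq D\cap X$, and Lemma~\ref{lem:one-local-retract} finishes the proof.

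The step I expect to be the real obstacle is precisely this last shrinking: the $r$-center that contracts $Q_{*}\cap X_j$ inside $\mathbf E_{\restriction X_j}$ is built from balls centered \emph{in $X_j$}, whereas admissibility demands balls centered in $X$, so one must route these centers through the one-local retraction of $X_j$ onto smaller members of the family and keep the estimates uniform in $j$; carrying out this bookkeeping correctly --- and checking that the resulting smaller set still meets every $X_{j'}$ --- is the technical heart of the argument. For a family that is merely a chain the bookkeeping is lighter, and one may instead construct the desired point of $D\cap X$ as a ``limit'' along the chain of points $p_\xi\in D\cap X_\xi$ produced by successive one-local retractions, limit ordinals being handled by the compactness of $\mathbf E$; it is the general down-directed case that forces the Zorn argument above.
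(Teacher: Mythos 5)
The overall architecture (reduce to the ball criterion of Lemma~\ref{lem:one-local-retract}, minimize via Zorn, let normal structure force a singleton which then lies in every $X_j$) is in the right spirit, and your first two paragraphs are sound. But the step you yourself flag as ``the real obstacle'' is a genuine gap, and in the framework you have set up it cannot be closed. Your admissible sets are by definition intersections of balls of $\mathbf E$ \emph{centered in $X$}, so to contradict minimality of a non-singleton $Q_{*}$ you must cut $Q_{*}$ by further balls centered in $X$. The only contraction that the normal structure of $\mathbf E_{\restriction X_j}$ provides is $(Q_{*}\cap X_j)\cap C(Q_{*}\cap X_j,r)$, whose defining balls are centered at points of $Q_{*}\cap X_j$, i.e.\ in $X_j$ but not in $X$. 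Your displayed representation $Q_{*}=\bigcap\{B_{\mathbf E}(u,t): u\in X,\ t\in\mathcal E,\ Q_{*}\cap X_j\subseteq B_{\mathbf E}(u,t)\}$ does not transport these balls: it says that every ball centered in $X$ which contains $Q_{*}\cap X_j$ already contains all of $Q_{*}$, which if anything shows that \emph{no} ball centered in $X$ can separate $Q_{*}$ from $Q_{*}\cap X_j$. Routing the new centers ``through the one-local retractions onto smaller members of the family'' only moves a center from $X_j$ into $X_{j'}$ for one $j'$ at a time; to land a single center in $X=\bigcap_j X_j$ you would need $X$ to behave like a one-local retract inside the $X_j$ --- which is precisely the statement being proved. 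So minimality of $Q_{*}$ in your ordering does not force $Q_{*}$, nor any $Q_{*}\cap X_j$, to be a singleton, and the proof does not close. (The fallback you sketch for chains has the same defect: $D\cap X_\xi$ is an intersection of balls of $\mathbf E_{\restriction X_\xi}$, not of $\mathbf E$, so the compact structure of $\mathbf E$ does not apply to the chain $(D\cap X_\xi)_\xi$ directly.)

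The paper escapes this circularity by never asking for a single set with centers in $X$. After reducing, via Lemma~\ref{lem:down}, to well-ordered descending chains $(E_\alpha)_{\alpha<\kappa}$, it minimizes over \emph{families} $\mathcal A=(A_\alpha)_{\alpha<\kappa}$ in which each $A_\alpha$ is only required to be an intersection of balls of $\mathbf E_{\restriction E_\alpha}$ (centers in $E_\alpha$), ordered componentwise (Lemma~\ref{lem:descending-intersection}). Minimality is then exploited twice: Sublemma~\ref{lem:descendingball} shows any ball containing one level contains all levels; Corollary~\ref{coro:key}$(d)$ uses the one-local retract property of $E_\alpha$ to transfer each radius $r\in r_{\mathbf E}(A_\beta)$ down to a center lying in $A_\alpha$ itself, making $r_{\mathbf E}(A_\alpha)$ independent of $\alpha$; and Sublemma~\ref{sublem2} shrinks \emph{all levels simultaneously} by the centers $A_\alpha\cap C_r(A_\alpha)$ for a common radius $r$ --- legitimate because each level only needs its own centers. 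Normal structure then forces every $A_\alpha$ to be a singleton, and since the singletons decrease they coincide, yielding the point of $\bigcap_\alpha B_\alpha$. If you want to salvage your argument, you must replace your single minimal set by such a minimal compatible family; at that point you have essentially reconstructed the paper's proof.
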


We will prove Theorem \ref{thm:best} in the next section. From it, we derive easily our main result. 

\begin{theorem} \label{thm:cor}
If  $\mathbf E$  is involutive, reflexive and  has a compact and normal structure then every commuting family $\mathcal F$ of endomorphisms of $\mathbf E$ has a common fixed point. Furthermore, the restriction of $\mathbf E$ to the  set $Fix (\mathcal F)$  of common fixed points of $\mathcal F$ is a one-local retract of $\mathbf E$. 
\end{theorem}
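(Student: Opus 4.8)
The plan is to deduce Theorem \ref{thm:cor} from Theorem \ref{thm:best} and Proposition \ref{prop:main2} by a Zorn's lemma argument on the poset of one-local retracts that are invariant under the family $\mathcal F$. First I would consider the collection $\mathcal P$ of all subsets $X\subseteq E$ such that $\mathbf E_{\restriction X}$ is a one-local retract of $\mathbf E$, $X$ is stable under every $f\in\mathcal F$ (i.e. $f(X)\subseteq X$), and $X\neq\emptyset$. This collection is nonempty since $E$ itself belongs to it. Order $\mathcal P$ by reverse inclusion. Given a chain (or more generally a down-directed subfamily) $\{X_j\}_{j\in J}$ in $\mathcal P$, its intersection $X_\infty:=\bigcap_j X_j$ is again stable under $\mathcal F$ (trivially), and by Theorem \ref{thm:best} it is a one-local retract of $\mathbf E$; the one remaining point is that $X_\infty\neq\emptyset$. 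Here I would use that, by Lemma \ref{lem:compact}, each $\mathbf E_{\restriction X_j}$ has a compact structure, so each $X_j$, being a member of $\hat{\mathcal B}^*_{\mathbf E}$ (a one-local retract is in particular an intersection of balls, by Lemma \ref{lem:one-local-retract} applied to the empty family — or more carefully, $X_j\in\hat{\mathcal B}_{\mathbf E}$ can be argued from the ball property), lies in $\hat{\mathcal B}^*_{\mathbf E}$, and the compact structure of $\mathbf E$ forces the down-directed intersection to be nonempty via the finite intersection property. Thus $X_\infty\in\mathcal P$ and $\mathcal P$ is inductive.

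By Zorn's lemma, $\mathcal P$ has a minimal element $X_0$ (minimal for inclusion). I claim $|X_0|=1$. Suppose not. By Lemma \ref{lem:compact}, $\mathbf E_{\restriction X_0}$ is involutive (being a restriction of an involutive system), reflexive, and has a compact and normal structure. Pick any $f\in\mathcal F$ and apply Proposition \ref{prop:main2} inside $\mathbf E_{\restriction X_0}$: the restriction $f_{\restriction X_0}$ is an endomorphism of $\mathbf E_{\restriction X_0}$ (since $X_0$ is $f$-stable), so $\mathrm{Fix}(f_{\restriction X_0})$ is a one-local retract of $\mathbf E_{\restriction X_0}$, hence by Corollary \ref{cor:main} it is nonempty. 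Moreover $\mathrm{Fix}(f_{\restriction X_0})$ is stable under every $g\in\mathcal F$: if $x$ is fixed by $f$ and $g$ commutes with $f$, then $f(g(x))=g(f(x))=g(x)$, so $g(x)$ is again fixed by $f$. By part (ii) of Lemma \ref{transitivity}, a one-local retract of a one-local retract of $\mathbf E$ is a one-local retract of $\mathbf E$, so $\mathrm{Fix}(f_{\restriction X_0})$ is a one-local retract of $\mathbf E$; it is nonempty and $\mathcal F$-stable, hence it belongs to $\mathcal P$. Minimality of $X_0$ forces $\mathrm{Fix}(f_{\restriction X_0})=X_0$, i.e. $X_0\subseteq\mathrm{Fix}(f)$, for every $f\in\mathcal F$. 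Thus $X_0\subseteq\mathrm{Fix}(\mathcal F)$, and in particular $\mathrm{Fix}(\mathcal F)\neq\emptyset$. If $X_0$ were not a singleton, then normality of $\mathbf E_{\restriction X_0}$ would be contradicted? — no, that is not quite the contradiction; rather the point is simply that we have now shown $X_0$ consists of common fixed points, which is already the first assertion. So in fact the minimality argument directly yields $X_0\subseteq\mathrm{Fix}(\mathcal F)$ without needing $|X_0|=1$ as an intermediate step.

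For the furthermore-statement, I would show $\mathrm{Fix}(\mathcal F)=\bigcap_{f\in\mathcal F}\mathrm{Fix}(f)$ is a one-local retract of $\mathbf E$. The family $\{\mathrm{Fix}(f):f\in\mathcal F\}$ need not be down-directed a priori, so I would instead consider the family of finite intersections $\mathrm{Fix}(\{f_1,\dots,f_k\})=\mathrm{Fix}(f_1)\cap\dots\cap\mathrm{Fix}(f_k)$ over finite subsets of $\mathcal F$; this family is down-directed. By the recurrence already noted in the text after Proposition \ref{prop:main} (finitely many commuting maps have a common fixed point) together with Proposition \ref{prop:main2} applied iteratively, each such finite intersection is a one-local retract of $\mathbf E$ — more precisely, $\mathrm{Fix}(f_1)$ is a one-local retract by Proposition \ref{prop:main2}, it is $f_2$-stable by the commutation argument above, $\mathbf E_{\restriction\mathrm{Fix}(f_1)}$ inherits involutivity, reflexivity and compact-normal structure by Lemma \ref{lem:compact}, so $\mathrm{Fix}(f_1)\cap\mathrm{Fix}(f_2)=\mathrm{Fix}((f_2)_{\restriction\mathrm{Fix}(f_1)})$ is a one-local retract of $\mathbf E_{\restriction\mathrm{Fix}(f_1)}$ and hence of $\mathbf E$ by Lemma \ref{transitivity}(ii), and so on inductively. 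Then $\mathrm{Fix}(\mathcal F)$ is the intersection of this down-directed family of one-local retracts, so by Theorem \ref{thm:best} it is a one-local retract of $\mathbf E$; it is nonempty by the first part of the proof. The main obstacle I anticipate is the bookkeeping around nonemptiness of down-directed intersections and making sure at each stage the relevant restricted system still satisfies the hypotheses of Theorem \ref{thm:best} and Proposition \ref{prop:main2} — i.e. correctly chaining Lemma \ref{lem:compact} (preservation of compact/normal structure under one-local retraction), Lemma \ref{transitivity} (transitivity of the one-local retract relation), and the finite intersection property supplied by the compact structure.
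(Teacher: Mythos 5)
Your second paragraph (the ``furthermore'' part) is, in structure, exactly the paper's proof: an induction showing that $Fix(\mathcal F')$ is a one-local retract of $\mathbf E$ for every finite $\mathcal F'\subseteq\mathcal F$ (chaining Proposition \ref{prop:main2}, Lemma \ref{lem:compact} and Lemma \ref{transitivity}), followed by Theorem \ref{thm:best} applied to the down-directed family of these sets. That part alone proves the whole theorem, because a one-local retract of a nonempty system is automatically nonempty (the empty set cannot be the image of a retraction of $\mathbf E_{\restriction\{x\}}$), so the conclusion of Theorem \ref{thm:best} already yields $Fix(\mathcal F)\neq\emptyset$. Your entire first paragraph --- the Zorn's lemma search for a minimal $\mathcal F$-stable one-local retract --- is therefore a correct but redundant detour, as you half-notice yourself; the paper does not need it. One genuine flaw inside that detour: the parenthetical claim that a one-local retract belongs to $\hat{\mathcal B}^{*}_{\mathbf E}$ does not follow from Lemma \ref{lem:one-local-retract} (which only says that a nonempty intersection of balls of $\mathbf E$ centered in $A$ meets $A$, not that $A$ is itself such an intersection) and is false even under the theorem's hypotheses: in the three-element chain $0<1<2$, the subset $\{0,2\}$ is a one-local retract (send $1$ to $2$) but intersections of balls in a chain are order intervals. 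Fortunately you never need that claim: the nonemptiness of the intersection of a chain of one-local retracts is delivered directly by Theorem \ref{thm:best} for the reason just given, so the Zorn argument is repairable --- but it is better simply deleted in favour of your second paragraph.
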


\begin{proof}
For a subset $\mathcal {F'}$ of $\mathcal F$, let $Fix (\mathcal {F'})$ be the set of fixed points of $\mathcal {F'}$.

\begin{claim}\label{claim:finite} For every finite subset $\mathcal {F'}$ of $\mathcal F$, $\mathbf {E}_{\restriction Fix(\mathcal {F'})}$ is a one-local retract of $\mathbf E$.
 \end{claim}
\noindent {\bf Proof of  Claim \ref {claim:finite}.} Induction on $n:= \vert \mathcal {F'}\vert$. If $n=0$, there is no map, hence the set of fixed points is $E$, thus the conclusion holds. If $n=1$,  this is Theorem \ref{thm:main}. Let $n\geq 1$. Suppose that the property holds for every subset $\mathcal {F''}$ of $\mathcal F'$ such that $\vert\mathcal {F''}\vert<n$. Let $f\in \mathcal {F'}$  and $\mathcal F'':= \mathcal {F'} \setminus \{f\}$. From our inductive hypothesis, $\mathbf {E}_{\restriction Fix(\mathcal {F''})}$  is a one-local retract of $\mathbf E$. Thus, according to Lemma \ref{lem:compact},   $\mathbf {E}_{\restriction Fix(\mathcal {F''})}$ has a compact and normal structure. Now since $f$ commutes with every member $g$ of $\mathcal {F''}$, $f$ preserves $Fix(\mathcal {F''})$ (indeed, if  $u \in Fix(\mathcal {F''})$,  we have $g(f(u))=f(g(u))=f(u)$, that is $f(u) \in Fix(\mathcal {F''})$. Thus $f$ induces  an endomorphism $f''$ of $\mathbf {E}_{\restriction Fix(\mathcal {F''})}$.    According to Theorem \ref{thm:main},  the restriction of $\mathbf E_{\restriction Fix(\mathcal {F''}) }$ to $Fix (f'')$, that is $\mathbf E_{\restriction Fix(\mathcal {F'})}$,  is a one-local retract of $\mathbf E_{\restriction Fix(\mathcal {F''})}$. Since the notion of one-local retract is transitive  it follows that $\mathbf E_{\restriction Fix(\mathcal {F'})}$ is a one-local retract of $\mathbf E$. \hfill $\Box$

 Let $\mathcal P:= \{Fix (\mathcal {F''}): \vert \mathcal {F''}\vert < \aleph_{0} \}$ and $P:= \bigcap \mathcal P$. According to Theorem \ref{thm:best},  $\mathbf E_{\restriction  P}$ is a one-local retract of $\mathbf E$. Since $P= Fix(\mathcal {F})$ the conclusion follows.

\end{proof}
\section{Proof of Theorem \ref{thm:best}}

We recall the following basic fact about ordered sets (see \cite {cohn} Proposition 5.9 p 33). 
\begin{lemma} \label{lem:down}Every down directed subset of a partially ordered set $P$ has an infimum iff every totally ordered subset of $P$ has an infimum. 
\end{lemma}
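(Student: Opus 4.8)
The plan is to prove the two implications separately, the forward one being immediate and the reverse one carrying all the content. Throughout I read ``down-directed'' as nonempty and closed under the formation of a common lower bound of any two of its members, and ``infimum'' means greatest lower bound in $P$ (not necessarily attained in the subset). For the easy implication I would observe that a totally ordered subset $C\subseteq P$ is itself down-directed: any $x,y\in C$ are comparable, so the smaller of the two already lies in $C$ and is a common lower bound. Hence if every down-directed subset of $P$ has an infimum, then in particular every chain does, and nothing more is needed for this direction.

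The substantive direction is to show that, \emph{assuming every chain has an infimum}, every down-directed set $D$ has one. I would prove this by transfinite induction on the cardinal $\kappa:=|D|$, the claim being ``$\inf D$ exists''. The base case of finite $\kappa$ is handled by noting that a finite down-directed set has a least element, obtained by iterating the common-lower-bound property finitely often, and this least element is its infimum. For the inductive step I would invoke Iwamura's decomposition lemma: an infinite down-directed set $D$ of cardinality $\kappa$ can be written as an increasing union $D=\bigcup_{\alpha<\kappa}D_\alpha$ of down-directed subsets $D_\alpha$, each of cardinality strictly less than $\kappa$ and with $D_\lambda=\bigcup_{\alpha<\lambda}D_\alpha$ at limit stages. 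By the induction hypothesis each $D_\alpha$ has an infimum $m_\alpha$; since $\alpha\le\alpha'$ gives $D_\alpha\subseteq D_{\alpha'}$ and hence $m_{\alpha'}\le m_\alpha$, the family $(m_\alpha)_{\alpha<\kappa}$ is a decreasing chain, so it has an infimum $m$ by the standing hypothesis on chains.

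It then remains to verify $m=\inf D$, which is routine: every $d\in D$ lies in some $D_\alpha$, whence $m\le m_\alpha\le d$, so $m$ is a lower bound of $D$; and any lower bound $u$ of $D$ bounds each $D_\alpha$ from below, so $u\le m_\alpha$ for all $\alpha$ and therefore $u\le m$. This closes the induction and establishes the reverse implication.

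The main obstacle is Iwamura's lemma itself, i.e.\ decomposing a directed set of cardinality $\kappa$ into a continuous tower of directed subsets of smaller cardinality; this is proved by a careful transfinite recursion (using the axiom of choice) that enumerates $D$ in order type $\kappa$ and at each step enlarges the current piece into a directed set absorbing the new element while adjoining fewer than $\kappa$ witnesses to directedness, with the singular-cardinal case requiring the most attention. Since this is precisely the engine behind the cited Proposition~5.9 of \cite{cohn}, I would either reproduce its short recursion or quote it directly; everything after it --- the monotonicity of the $m_\alpha$ and the two inequalities pinning down $m$ --- is elementary.
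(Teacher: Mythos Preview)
The paper does not supply its own proof of this lemma; it merely records it as a known fact and refers to Cohn's \emph{Universal Algebra}, Proposition~5.9. Your argument is correct and is precisely the standard proof via Iwamura's decomposition lemma, which is also the route taken in the literature. The forward implication is immediate as you say, and your transfinite induction on $|D|$ for the reverse is sound: the observation that the induced infima $m_\alpha$ form a decreasing chain (since $D_\alpha\subseteq D_{\alpha'}$ forces $m_{\alpha'}\le m_\alpha$), and the two-line verification that $\inf_\alpha m_\alpha=\inf D$, are both correct. Your acknowledgment that Iwamura's lemma is the only nontrivial ingredient is apt; it is exactly the content one either reproduces or cites, and there is nothing to add.
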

  
Let $\mathbf E:= (E, \mathcal E)$  which is  is involutive, reflexiveand has a compact and normal structure. Let $P$ be the set, ordered by inclusion, of   subsets $A$ of $E$ such that $\mathbf E_{\restriction A}$ is a one-local retract of $\mathbf E$. 

We will prove the following: 

\begin{lemma}\label{lem:intersection} The set $P$  is closed under intersection of every chain of its members. 
\end{lemma}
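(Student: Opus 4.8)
The plan is to prove Lemma \ref{lem:intersection} by taking a chain $\mathcal C$ in $P$, setting $X := \bigcap \mathcal C$, and showing directly that $\mathbf E_{\restriction X}$ is a one-local retract of $\mathbf E$; that is, for each $a \in E \setminus X$ I must produce a retraction of $\mathbf E_{\restriction X \cup \{a\}}$ onto $\mathbf E_{\restriction X}$. Since $\mathbf E$ is reflexive and involutive, Lemma \ref{lem:one-local-retract} lets me replace this task by the equivalent ball-intersection condition: whenever $B_{\mathbf E}(x_i, r_i)$ is a family of balls with all $x_i \in X$ and $\bigcap_i B_{\mathbf E}(x_i, r_i) \neq \emptyset$, then already $\bigcap_i B_{\mathbf E}(x_i, r_i) \cap X \neq \emptyset$. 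So fix such a family and let $B := \bigcap_i B_{\mathbf E}(x_i, r_i)$, a nonempty member of $\hat{\mathcal B}^{\ast}_{\mathbf E}$.

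The key idea is that for each $A \in \mathcal C$, since the centers $x_i$ all lie in $X \subseteq A$, the one-local retract property of $\mathbf E_{\restriction A}$ (via Lemma \ref{lem:one-local-retract} applied to $\mathbf E_{\restriction A}$ — note $\mathbf E_{\restriction A}$ is itself reflexive and involutive) gives $B \cap A \neq \emptyset$. Moreover $B \cap A$ is itself an intersection of balls of $\mathbf E$ restricted appropriately, or more usefully: I will work with the sets $B_A := \Cov_{\mathbf E}(\{x_i\}_i \text{ and their ball data}) \cap A$ — actually the cleanest route is to observe that $\{B \cap A : A \in \mathcal C\}$, or rather $\{\Cov_{\mathbf E}(B\cap A) \cap A'\}$, forms a down-directed (in fact chain-indexed) family of nonempty members of $\hat{\mathcal B}^{\ast}_{\mathbf E}$: if $A \subseteq A'$ in $\mathcal C$ then $B \cap A \subseteq B \cap A'$. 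By the compact structure of $\mathbf E$ and Lemma \ref{lem:infimum} (together with Lemma \ref{lem:down}), this down-directed family has nonempty intersection $\bigcap_{A \in \mathcal C}(B \cap A) = B \cap \bigcap_{A}A = B \cap X$. Hence $B \cap X \neq \emptyset$, which is exactly what Lemma \ref{lem:one-local-retract} needs to conclude $\mathbf E_{\restriction X}$ is a one-local retract of $\mathbf E$.

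The main obstacle I anticipate is ensuring that the sets whose intersection I take genuinely lie in $\hat{\mathcal B}^{\ast}_{\mathbf E}$ so that the compactness/f.i.p. argument applies, and that the chain hypothesis is actually used correctly to get down-directedness. The subtlety is that $B \cap A$ need not obviously be an intersection of balls of $\mathbf E$ — but $B$ is, and $A \in P$ only tells us $\mathbf E_{\restriction A}$ is a one-local retract, not that $A \in \hat{\mathcal B}_{\mathbf E}$. To circumvent this I would instead argue: for each $A \in \mathcal C$, $B \cap A \neq \emptyset$ (shown above), so pick nothing yet; rather, consider the family $\mathcal G$ of all balls $B_{\mathbf E}(x_i, r_i)$ from our fixed family. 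This family has the f.i.p. — indeed all finite subintersections contain $B \neq \emptyset$ — but I need to add the "constraint of being in $X$", which is not a ball condition. The correct fix, following the pattern of Lemma \ref{lem:compact}, is: for $A \in \mathcal C$, since the $x_i \in X \subseteq A$ and $B \cap A \neq \emptyset$, and since $B \cap A \supseteq B \cap X$ for all $A$, the family $\{B \cap A\}_{A \in \mathcal C}$ is a chain of nonempty sets; but to apply Lemma \ref{lem:infimum} I genuinely need each $B\cap A$ in $\hat{\mathcal B}^{\ast}_{\mathbf E}$. This forces the more careful approach: replace $A$ by $\Cov_{\mathbf E}(B \cap A)$, which does lie in $\hat{\mathcal B}_{\mathbf E}$, observe it is nonempty, that these form a chain (since $B \cap A \subseteq B\cap A'$ implies $\Cov_{\mathbf E}(B\cap A) \subseteq \Cov_{\mathbf E}(B \cap A')$), hence by the compact structure $\bigcap_{A}\Cov_{\mathbf E}(B\cap A) \neq \emptyset$; and then use the one-local retract property of each $\mathbf E_{\restriction A}$ to push a point of this intersection into $B \cap A$, and finally a limiting/diagonal argument over the chain — or directly the equality $\bigcap_A \Cov_{\mathbf E}(B \cap A) = \bigcap_A (B\cap A)$ when the $x_i$ centering data is retained — to land in $B \cap X$.

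Thus the key steps, in order: (1) reduce via Lemma \ref{lem:one-local-retract} to the ball-intersection statement, using reflexivity and involutivity; (2) for a fixed family of balls centered in $X$ with nonempty intersection $B$, use that each $\mathbf E_{\restriction A}$ ($A \in \mathcal C$) is a one-local retract of $\mathbf E$ to get $B \cap A \neq \emptyset$; (3) form the chain $\{\Cov_{\mathbf E}(B \cap A)\}_{A \in \mathcal C}$ of nonempty members of $\hat{\mathcal B}^{\ast}_{\mathbf E}$, apply the compact structure (Lemma \ref{lem:infimum}, Lemma \ref{lem:down}) to get its intersection nonempty, and identify this intersection with $B \cap X$; (4) conclude $\mathbf E_{\restriction X}$ is a one-local retract. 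Then Lemma \ref{lem:intersection} gives that $P$ is inductive under reverse inclusion, and since any down-directed family has an infimum there equal to its intersection (Lemma \ref{lem:down}), Theorem \ref{thm:best} follows.
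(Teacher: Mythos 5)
Your reduction to the ball-intersection criterion (step (1)) and your step (2) showing $B\cap A\neq\emptyset$ for each $A\in\mathcal C$ are correct and match the opening moves of the paper's argument. The genuine gap is in step (3), the passage from ``$B\cap A\neq\emptyset$ for every $A\in\mathcal C$'' to ``$B\cap X\neq\emptyset$''. The sets $B\cap A$ do not lie in $\hat{\mathcal B}^{\ast}_{\mathbf E}$ (a one-local retract $A$ need not be an intersection of balls of $\mathbf E$), so the compact structure tells you nothing about $\bigcap_{A\in\mathcal C}(B\cap A)$. Your substitute $\Cov_{\mathbf E}(B\cap A)$ does lie in $\hat{\mathcal B}^{\ast}_{\mathbf E}$ and the corresponding chain has nonempty intersection by compactness, but the claimed identity $\bigcap_{A}\Cov_{\mathbf E}(B\cap A)=\bigcap_{A}(B\cap A)$ is unjustified and false in general: $\Cov_{\mathbf E}(B\cap A)$ may contain points far outside $A$, and a point common to all the covers need not lie in any $A$. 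Retracting such a point into $B\cap A$ produces a point \emph{depending on} $A$, and nothing forces these choices to stabilize along the chain. The ``limiting/diagonal argument'' you invoke at exactly that moment is not a routine step; it is the entire content of the paper's Lemma \ref{lem:descending-intersection}, whose proof is Baillon's minimal-sequence argument: one chooses a minimal descending sequence $(A_\alpha)$ of nonempty ball-intersections of the spaces $\mathbf E_{\restriction E_\alpha}$ sitting below $(B_\alpha)$, proves that $r_{\mathbf E}(A_\alpha)$ is independent of $\alpha$ and equals $\delta_{\mathbf E}(A_\alpha)$, and then uses the \emph{normal} structure to force each $A_\alpha$ to be a singleton.

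A telling symptom is that your argument never uses the normal structure, whereas the statement genuinely needs it: compactness alone does not even guarantee that a descending chain of one-local retracts has nonempty intersection. Indeed, taking the empty family of balls (so $B=E$), your scheme would have to deduce $X=\bigcap\mathcal C\neq\emptyset$ from $\bigcap_{A}\Cov_{\mathbf E}(A)\neq\emptyset$, which is hopeless. To close the gap you must import the machinery of Section 4: first reduce, by induction on the cardinality of the chain, to a dually well-ordered sequence $(E_\alpha)_{\alpha<\kappa}$ that is continuous at limits; then apply Lemma \ref{lem:descending-intersection} once with $B_\alpha=E_\alpha$ (using normality to see that each $E_\alpha$ is contained in a ball of $\mathbf E_{\restriction E_\alpha}$) to get $X\neq\emptyset$, and again with $B_\alpha=E_\alpha\cap\bigcap_{i}B_{\mathbf E}(x_i,r_i)$ to get $B\cap X\neq\emptyset$, before concluding via Lemma \ref{lem:one-local-retract} as you do. Your final remark on deducing Theorem \ref{thm:best} from the lemma via Lemma \ref{lem:down} is correct and is the same as the paper's.
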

We claim that with  the help of 
 Lemma \ref{lem:down} it follows that  $P$ is closed under intersection of every down directed family of its members. This statement  is Theorem \ref{thm:best}. 
 
 Indeed, observe first that from Lemma \ref{lem:intersection} it follows that for every subset $X$ of $E$, the set $P_X:= \{A\in P: X\subseteq A\}$ is closed under intersection of every chain of its members (if $\mathcal C$ is such a chain then $C:=\bigcap \mathcal C\in P$ by Lemma \ref{lem:intersection}, and trivially $X\subseteq C$, hence $C\in P_X$). Next, let $\mathcal A$ be a down directed family of members of $P$ and let $X:=\bigcap \mathcal A$. Then $\mathcal A \subseteq P_X$. Since $P_X$ is closed under intersection of every chain of its members, Lemma  \ref{lem:down} ensures that $\mathcal A$ has an infimum in $P_X$. This infimum must be  $X$.

In order to prove Lemma \ref{lem:intersection} we prove the following 
\begin{lemma}\label{lem:descending-intersection}Let $\mathbf E:= (E, \mathcal E)$ be a reflexive and involutive binary relational system with a compact and normal structure; let $\kappa$ be a cardinal. For every ordinal $\alpha$, $\alpha< \kappa$ let $B_{\alpha}, E_{\alpha}$ be subsets  of $E$ such that:
\begin{enumerate}
\item $B_{\alpha} \supseteq B_{\alpha+1}$ and $E_{\alpha} \supseteq E_{\alpha+1}$ for every $\alpha<\kappa$; 
\item $\bigcap_{\gamma<\alpha} B_{\gamma}=B_{\alpha}$ and $\bigcap_{\gamma<\alpha} E_{\gamma}=E_{\alpha}$ for every limit $\alpha< \kappa$; 
\item $\mathbf E_{\alpha}:=\mathbf E_{\restriction  E_{\alpha}}$ is a one-local retract of $\mathbf E$ and $B_{\alpha}$ is a nonempty intersection of balls of $\mathbf  E_{\alpha}$. 
\end{enumerate} 

Then $B_{\kappa}:=\bigcap_{\alpha< \kappa} B_{\alpha}\not = \emptyset$. 
\end{lemma}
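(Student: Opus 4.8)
The plan is to argue by transfinite induction on $\kappa$ (run over all ordinals, not only cardinals). If $\kappa=\lambda+1$ is a successor, then by the monotonicity in (1) we have $B_\kappa=\bigcap_{\alpha\le\lambda}B_\alpha=B_\lambda$, which is nonempty by (3); and $\kappa=0$ gives $B_\kappa=E$. So the work is the limit case, and there I would first dispose of singular $\kappa$ cheaply: if $\mathrm{cf}(\kappa)<\kappa$, choose a strictly increasing continuous cofinal sequence $(\alpha_\xi)_{\xi<\mathrm{cf}(\kappa)}$ in $\kappa$; then $(B_{\alpha_\xi},E_{\alpha_\xi})_{\xi<\mathrm{cf}(\kappa)}$ still satisfies (1)--(3) (continuity of the subsequence gives (2)) and $\bigcap_\xi B_{\alpha_\xi}=\bigcap_\alpha B_\alpha$, so the inductive hypothesis applied to $\mathrm{cf}(\kappa)$ finishes it. It remains to treat a regular limit cardinal $\kappa$ --- in particular $\kappa=\omega$.

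For this, write each $B_\alpha=\bigcap_{i\in I_\alpha}B_{\mathbf E_\alpha}(x^\alpha_i,s^\alpha_i)$ with $x^\alpha_i\in E_\alpha$, pick $r^\alpha_i\in\mathcal E$ with $s^\alpha_i=r^\alpha_i\cap(E_\alpha\times E_\alpha)$, and set $\hat B_\alpha:=\bigcap_{i\in I_\alpha}B_{\mathbf E}(x^\alpha_i,r^\alpha_i)$, so $B_\alpha=\hat B_\alpha\cap E_\alpha$. Any finite subfamily of $\{B_{\mathbf E}(x^\alpha_i,r^\alpha_i):\alpha<\kappa,\ i\in I_\alpha\}$ involves only indices below some $\beta<\kappa$, and the nonempty set $B_\beta$ sits inside each of its members; hence this family has the finite intersection property, and the compact structure of $\mathbf E$ gives $C:=\bigcap_{\alpha,i}B_{\mathbf E}(x^\alpha_i,r^\alpha_i)\neq\emptyset$. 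Since $\bigcap_{\alpha<\kappa}B_\alpha=C\cap E_\kappa$ with $E_\kappa:=\bigcap_{\alpha<\kappa}E_\alpha$, it is enough to place a point of $C$ into $E_\kappa$. Using the continuity hypothesis (2) one checks that $C\cap E_\alpha=\bigcap_{\alpha\le\gamma<\kappa}(\hat B_\gamma\cap E_\alpha)$ for every $\alpha<\kappa$; the centres $x^\gamma_i$ that occur here ($\gamma\ge\alpha$) already lie in $E_\gamma\subseteq E_\alpha$, so this is an intersection of balls of $\mathbf E_\alpha$, and repeating the finite-intersection argument inside $\mathbf E_\alpha$ (which has a compact structure by Lemma \ref{lem:compact}) shows $C\cap E_\alpha\neq\emptyset$.

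I would then build, by transfinite recursion, a ``thread'' $z_\alpha\in C\cap E_\alpha$, $\alpha\le\kappa$. Start with any $z_0\in C\cap E_0$. At a successor step $\alpha\to\alpha+1$: since $\mathbf E_{\restriction E_{\alpha+1}}$ is a one-local retract of $\mathbf E_\alpha$ (Lemma \ref{transitivity}(i)) and $\mathbf E_\alpha$ is reflexive and involutive, push $z_\alpha$ into $E_{\alpha+1}$ by the construction in the proof of Lemma \ref{lem:one-local-retract}; the result lies in every ball of $\mathbf E_\alpha$ centred in $E_{\alpha+1}$ that contains $z_\alpha$, hence in the balls cutting out $C\cap E_{\alpha+1}$ (whose centres $x^\gamma_i$, $\gamma\ge\alpha+1$, lie in $E_{\alpha+1}$ and already contain $z_\alpha$), so it lies in $C\cap E_{\alpha+1}$; note moreover that this point is, in the $\mathbf E_\alpha$-sense, at least as close to each point of $E_{\alpha+1}$ as $z_\alpha$ was. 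The main obstacle is the limit stages $\lambda\le\kappa$: one needs $z_\lambda\in\bigcap_{\gamma<\lambda}(C\cap E_\gamma)$, which for $\lambda=\kappa$ is precisely the set $C\cap E_\kappa$ sought. The plan there is to extract $z_\lambda$ as a limit of the thread: the compact structure of $\mathbf E$ provides a point of $\bigcap_{\gamma<\lambda}\Cov_{\mathbf E}\bigl(\{z_\delta:\gamma\le\delta<\lambda\}\bigr)$, and one must then invoke the normal structure --- together with the ``getting closer'' property recorded at the successor steps, in Baillon's style --- to argue that this limit point cannot escape $E_\lambda$ (and not merely its ambient convex hull). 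Making that argument precise, i.e. forcing the thread to stabilise enough that its limit remains inside the one-local retract $E_\lambda$, is where essentially all the difficulty lies; granting it, the case $\lambda=\kappa$ yields $C\cap E_\kappa\neq\emptyset$, completing the induction and with it Theorem \ref{thm:best}.
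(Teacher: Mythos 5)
Your reductions are sound as far as they go: the successor and singular cases, the passage from $B_\alpha$ to $\hat B_\alpha\cap E_\alpha$, the compactness argument giving $C\cap E_\alpha\neq\emptyset$ for each $\alpha$, and the successor step of the thread via one-local retractions are all correct. But the proof is not complete: the limit stage, which you yourself flag as containing ``essentially all the difficulty,'' is exactly the content of the lemma, and the plan you sketch for it does not obviously close. A point of $\bigcap_{\gamma<\lambda}\Cov_{\mathbf E}\bigl(\{z_\delta:\gamma\le\delta<\lambda\}\bigr)$ has no reason to lie in $E_\lambda$ (a one-local retract is not closed under $\Cov_{\mathbf E}$), nor to lie in $C$; and the normal structure can only be invoked once you have exhibited a \emph{nonempty intersection of balls that is equally centered}, which a wandering thread of points does not by itself produce. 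The ``getting closer'' property is also not a single decreasing quantity here: radii are sets of relations, and without a device that makes the radius of something independent of $\alpha$ \emph{and} equal to its diameter, normality has nothing to bite on.

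The paper's proof avoids threads entirely and is worth comparing. It considers the set $\mathfrak A$ of all descending sequences $(A_\alpha)_{\alpha<\kappa}$ with each $A_\alpha$ a nonempty intersection of balls of $\mathbf E_\alpha$ contained in $B_\alpha$, ordered componentwise; compactness of each $\mathbf E_\alpha$ (Lemma \ref{lem:compact} plus Lemma \ref{lem:infimum}) and Zorn give a \emph{minimal} such sequence $(A_\alpha)$ below $(B_\alpha)$. Minimality is then exploited twice. First (Sublemma \ref{lem:descendingball}): if $A_\alpha\subseteq B_{\mathbf E}(x,r)$ with $x\in E_\alpha$, then replacing $A_\xi$ by $A_\xi\cap B_{\mathbf E}(x,r)$ for $\xi\le\alpha$ yields an admissible smaller sequence, hence equality, so \emph{every} $A_\beta$ lies in that ball; combined with the one-local retract property this makes $r_{\mathbf E}(A_\alpha)$ independent of $\alpha$. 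Second, for $r$ in this common radius, replacing each $A_\alpha$ by $A_\alpha\cap C(A_\alpha,r)$ again gives an admissible smaller sequence, hence equality, which yields $\delta_{\mathbf E}(A_\alpha)=r_{\mathbf E}(A_\alpha)$. Normality then forces each $A_\alpha$ to be a singleton, and a decreasing sequence of singletons is constant, giving the desired point of $\bigcap_\alpha B_\alpha$. The global minimization over the whole sequence at once is precisely what substitutes for the limit-stage argument you were unable to supply; I would recommend reworking your proof around such a minimal sequence rather than trying to stabilise a pointwise thread.
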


Before proving the lemma, let us deduce Lemma \ref{lem:intersection} from it. 
We argue by induction on the size of totally ordered families of one-local retracts of $\mathbf E$. First we may suppose that $E$ has more than one element; next, we may suppose that these families are dually well ordered by induction. Thus,   given an infinite cardinal $\kappa$, let $(\mathbf E_{\restriction E_{\alpha}})_{\alpha<\kappa}$ be a descending sequence of one-local retracts  of $\mathbf E$. From the induction hypothesis, we may suppose that  the restriction of $\E$ to $E'_{\alpha}:= \bigcap_{\gamma<\alpha}E_{\gamma}$ is a one-local retract of $\mathbf E$ for each limit ordinal $\alpha< \kappa$. Hence, we may suppose that  $E_{\alpha}:= \bigcap_{\gamma<\alpha}E_{\gamma}$ for each limit ordinal $\alpha< \kappa$. 
Since $\E_{\alpha}$ is a one-local retract of $\mathbf E$ and  $\mathbf E$ has a normal structure, $\E_{\alpha}$ has a normal structure (Lemma \ref{lem:compact}).  Hence, either $E_{\alpha}$ is a singleton, say $x_{\alpha}$, or $r_{\mathbf E_{\alpha}}(E_{\alpha}) \setminus \delta_{\mathbf E_{\alpha}} (E_{\alpha})\not =\emptyset$. In both cases, $E_{\alpha}$ is a ball of $\mathbf E_{\alpha}$ (since $\mathbf E$ is reflexive, $(x_{\alpha}, x_{\alpha}) \in r$ for any $r\in \mathcal E$, hence  the first case, $E_{\alpha}= B_{\mathbf E_{\alpha}}(x_{\alpha}, r_{\restriction {E_{\alpha}}})$,  whereas in second case,  $E_{\alpha}\subseteq B_{\mathbf E_{\alpha}}(x, r)$ for some $x\in E_{\alpha}, r \in r_{\mathbf E_{\alpha}}(E_{\alpha}) \setminus \delta_{\mathbf E_{\alpha}} (E_{\alpha})$.  Hence,  Lemma \ref{lem:descending-intersection} applies with $B_{\alpha}= E_{\alpha}$ and gives that $E_{\kappa}$ is nonempty. Let us prove that $\E_{\kappa}:= \E_{\restriction E_{\kappa}}$ is a one-local retract of $\mathbf E$. We apply Lemma \ref{lem:one-local-retract}. Let $(B_{\mathbf E}(x_i, r_i ))_{i\in I}, x_i \in E_{\kappa}$, $r_i\in \mathcal E$ be a family of balls  such that   the intersection  is nonempty. Since $\E_{\alpha}$ is a one-local retract of $\mathbf E$, the intersection $B_{\alpha}:= E_{\alpha}\bigcap \bigcap_{i\in I}B_{\mathbf E}(x_i, r_i)$ is nonempty for every $\alpha<\kappa$. Now, Lemma \ref{lem:descending-intersection} applied to the sequence $(E_{\alpha}, B_{\alpha})_{\alpha<\kappa}$ tells us that $B_{\kappa}:=E_{\kappa}\cap \cap_{i\in I} B_{\mathbf E}(x_i, r_i)$ is nonempty. According to Lemma \ref{lem:one-local-retract}, $\mathbf {E}_{\restriction B_{\kappa}}$ is a one-local retract of $\mathbf E$.   
\subsection{Proof of Lemma \ref{lem:descending-intersection}}

Let $\mathfrak A$ be the collection of all descending sequences $\mathcal A:= (A_{\alpha})_{\alpha < \kappa}$ such  that each $A_{\alpha}$ is a nonempty intersection of balls of $\mathbf E_{\restriction E_{\alpha}}$ included into $B_{\alpha}$.  Set $\mathbf E_{\alpha}:= \mathbf E_{\restriction E_{\alpha}}$ and  $\mathfrak B:=\Pi_{\alpha<\kappa}\hat B^{\ast}_{\mathbf E_{\alpha}}$. 

The sequence $\mathcal B:= (B_{\alpha})_{\alpha<\kappa}$ belongs to  $\mathfrak A$ and   $\mathfrak A$ is included into $\mathfrak B$. The set $\mathfrak B$ is ordered as follows:

\begin{equation}
(A'_{\alpha})_{\alpha< \kappa}\leq (A''_{\alpha})_{\alpha<\kappa}\;  \text {if} \;  A'_{\alpha}\subseteq  A''_{\alpha}\;  \text{for every} \;  \alpha<\kappa. 
\end{equation}

Since $\E_{\alpha}$ is a one-local retract of $\mathbf{E}$, $\mathbf E_{\alpha}$ has a normal and compact structure (Lemma \ref{lem:compact}). Since it has a compact structure, every descending sequence in $\hat B^{\ast}_{\mathbf E_{\alpha}}$ has an infimum (Lemma \ref{lem:infimum}). Thus,  there is a minimal sequence $\mathcal A:=(A_{\alpha})_{\alpha< \kappa}$ with $\mathcal A\leq \mathcal B$. 

We prove first that the sequence of  $r_{\mathbf E}(A_{\alpha})$ is constant (see (\ref{sublem:item5}) of Sublemma \ref{sublem2}). Let $\mathbf r$ be the common value. We prove next that $\delta_{\mathbf E}(A_{\alpha})=\mathbf r$ (see (\ref{sublem:item6}) of Sublemma \ref{sublem2}). Since $\mathbf E$ has a normal structure, we deduce that each $A_{\alpha}$ is a singleton. Since $\mathcal A$ is decreasing, $A_{\kappa}:= \bigcap_{\alpha< \kappa}A_{\alpha}$ is a singleton too. Hence, $B_{\kappa}\not = \emptyset$. This is the conclusion of  the lemma. 

The key argument for the proof of Sublemma \ref{sublem2} is the following. 

\begin{sublemma}\label{lem:descendingball} Let $\alpha <\kappa$  and $A_{\alpha}\subseteq B_{\mathbf E}(x, r)$, with $r\in \mathcal E$ and $x\in E_{\alpha}$. Then $A_{\beta}\subseteq B_{\mathbf E}(x, r)$ for each $\beta<\kappa$. 
\end{sublemma}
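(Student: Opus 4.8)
The plan is to prove Sublemma \ref{lem:descendingball} by contradiction, using the minimality of the sequence $\mathcal A = (A_\alpha)_{\alpha<\kappa}$ together with the one-local retract hypothesis on each $\mathbf E_\alpha$. Suppose there is some $\beta < \kappa$ with $A_\beta \not\subseteq B_{\mathbf E}(x,r)$. Since the sequence is descending and $A_\alpha \subseteq B_{\mathbf E}(x,r)$ by hypothesis, necessarily $\beta < \alpha$ (for $\beta \geq \alpha$ we would have $A_\beta \subseteq A_\alpha \subseteq B_{\mathbf E}(x,r)$). The idea is then to manufacture a strictly smaller element of $\mathfrak A$, contradicting minimality, by intersecting the ``offending'' part of the sequence with the ball $B_{\mathbf E}(x,r)$ and then transporting this ball appropriately through the one-local retracts $\mathbf E_\gamma$ for $\gamma \leq \alpha$.

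Concretely, first I would note that $x \in E_\alpha \subseteq E_\gamma$ for every $\gamma \leq \alpha$, so the ball $B_{\mathbf E}(x,r)$ makes sense relative to each $\mathbf E_\gamma$ with $\gamma \leq \alpha$, and $B_{\mathbf E_\gamma}(x, r_{\restriction E_\gamma}) = B_{\mathbf E}(x,r) \cap E_\gamma$. For $\gamma \leq \alpha$ define $A'_\gamma := A_\gamma \cap B_{\mathbf E}(x,r)$, and for $\gamma > \alpha$ leave $A'_\gamma := A_\gamma$. Then $A'_\gamma$ is an intersection of balls of $\mathbf E_\gamma$ (it is $A_\gamma$ intersected with one more ball of $\mathbf E_\gamma$), and $A'_\gamma \subseteq B_\gamma$. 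The descending property at successor steps and the intersection property at limits are routine to check since intersecting a whole tail of the sequence with a fixed set preserves both. The one thing that must be verified is that each $A'_\gamma$ is \emph{nonempty}: for $\gamma = \alpha$ this holds because $A_\alpha \subseteq B_{\mathbf E}(x,r)$, so $A'_\alpha = A_\alpha \neq \emptyset$; for $\gamma < \alpha$ one must produce a point of $A_\gamma$ lying in the ball. Here is where the one-local retract structure enters: $A_\alpha$ is nonempty and contained in $A'_\gamma$'s ambient intersection of balls over $E_\gamma$ together with $B_{\mathbf E}(x,r)$, but $A_\alpha \subseteq E_\alpha$, and since $\mathbf E_\alpha$, hence also the inclusion $E_\alpha \subseteq E_\gamma$, behaves well (Lemma \ref{transitivity} gives that $\mathbf E_\alpha$ is a one-local retract of $\mathbf E_\gamma$), one can use Lemma \ref{lem:one-local-retract} in the direction ``intersection over $E_\gamma$ nonempty implies nothing extra is needed'' — actually the cleaner route is: the balls defining $A_\gamma$ all have centers in $E_\gamma$, and adding the ball $B_{\mathbf E}(x,r)$ with center $x \in E_\alpha \subseteq E_\gamma$, the resulting intersection over $E$ contains $A_\alpha \neq \emptyset$ (because $A_\alpha \subseteq A_\gamma \subseteq A'_\gamma$'s balls and $A_\alpha \subseteq B_{\mathbf E}(x,r)$), so by the one-local retract property of $\mathbf E_\gamma$ in $\mathbf E$ this intersection meets $E_\gamma$, i.e. $A'_\gamma = A_\gamma \cap B_{\mathbf E}(x,r) \neq \emptyset$.

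With $\mathcal A' := (A'_\gamma)_{\gamma<\kappa} \in \mathfrak A$ and $\mathcal A' \leq \mathcal A$, minimality forces $\mathcal A' = \mathcal A$, whence $A_\gamma \subseteq B_{\mathbf E}(x,r)$ for all $\gamma \leq \alpha$, in particular $A_\beta \subseteq B_{\mathbf E}(x,r)$, contradicting the choice of $\beta$. This proves the sublemma. The main obstacle, and the step deserving the most care, is the nonemptiness verification for $A'_\gamma$ with $\gamma < \alpha$: one must be scrupulous that the ball $B_{\mathbf E}(x,r)$ has its center $x$ inside $E_\gamma$ (so that it is genuinely a ball of $\mathbf E_\gamma$ and the one-local retract property of $\mathbf E_\gamma$ applies to the enlarged family), and that the witness point $A_\alpha$ (or any point thereof) indeed lies in every ball of the enlarged family over $E$. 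Everything else — the successor and limit clauses, membership in $\mathfrak A$, the comparison $\mathcal A' \leq \mathcal A$ — is bookkeeping that follows from the definitions.
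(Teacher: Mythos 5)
Your proof is correct and follows essentially the same route as the paper: intersect the initial segment $(A_\xi)_{\xi\le\alpha}$ with the ball $B_{\mathbf E}(x,r)$, check the modified sequence stays in $\mathfrak A$, and invoke minimality of $\mathcal A$. The nonemptiness check for $A'_\gamma$ with $\gamma<\alpha$ — via $A_\alpha\neq\emptyset$ lying in the enlarged family of balls with centers in $E_\gamma$, plus the one-local retract property of $\mathbf E_\gamma$ in $\mathbf E$ — is exactly the detail the paper's terse proof leaves implicit, and you supply it correctly.
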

\begin{proof} Set $B:=  B_{\mathbf E} (x,r)$. For $\xi< \kappa$ set $A'_{\xi}:= A_{\xi}\cap B$ if $\xi\leq\alpha$ and $A'_{\xi}= A_{\xi}$ otherwise. The familly $\mathcal A':= (A'_{\xi})_{\xi<\kappa}$  belongs to $\mathfrak A$ and satisfies $\mathcal A'\leq \mathcal A$. Since $\mathcal A$ is minimal,  we get $\mathcal A'=\mathcal A$, thus $A_{\xi}= A_{\xi}  \cap B$ for $\xi\leq \alpha$ that is $A_\xi\subseteq B$; since $A_{\xi}\subseteq A_{\alpha} \subseteq B$ for $\xi\geq \alpha$ it follows that $A_\xi\subseteq B$. 
\end{proof}

Let $\alpha< \kappa$. From the hypotheses of the lemma, there is a family  $\mathcal B':= (B_{\mathbf E_{\restriction E_{\alpha}}} (x'_i, r'_i))_{i\in I}$, with $x'_i\in E_{\alpha}$, $r'_i \in \mathcal E_{\restriction  E_{\alpha}}$ such that  $A_{\alpha}=\bigcap  \mathcal  B'$.  For each $i\in I$, let $r_i$ such that $r_{i \restriction E_{\alpha}}= r'_i$. Let $\mathcal B:=  (B_{\mathbf E_{\restriction E_{\alpha}}} (x'_i, r'_i))_{i\in I}$, $B=\bigcap  \mathcal B$. Then $A_{\alpha}= B\cap E_{\alpha}$.

From the sublemma above we deduce:

\begin{corollary} \label{coro:key}
Let $\alpha< \kappa$. Then:
\begin{enumerate}[{(a)}]
\item $A_{\beta}\subseteq B\cap E_{\beta}$ for every $\beta< \alpha$; 
\item $A_{\alpha}= \bigcap_{\beta<\alpha} A_{\beta}$ if  $\alpha$ is a limit ordinal; 
\item $r_{\mathbf E} (A_{\alpha})\subseteq  r_{\mathbf E} (A_{\beta})$ for every $\beta< \alpha$; 
\item $r_{\mathbf E} (A_{\beta})\subseteq  r_{\mathbf E} (A_{\alpha})$ for every $\beta< \alpha$. 
\end{enumerate}
\end{corollary}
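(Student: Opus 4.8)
The plan is to prove Corollary~\ref{coro:key} by exploiting the minimality of the sequence $\mathcal A$ together with Sublemma~\ref{lem:descendingball}, and then to feed this into a Sublemma~\ref{sublem2} establishing that $\alpha\mapsto r_{\mathbf E}(A_\alpha)$ and $\alpha\mapsto\delta_{\mathbf E}(A_\alpha)$ are constant. For item~(a), recall that $A_\alpha = B\cap E_\alpha$ where $B=\bigcap_{i\in I}B_{\mathbf E}(x'_i,r_i)$ with $x'_i\in E_\alpha$; in particular $A_\alpha\subseteq B_{\mathbf E}(x'_i,r_i)$ for each $i$, so Sublemma~\ref{lem:descendingball} gives $A_\beta\subseteq B_{\mathbf E}(x'_i,r_i)$ for every $\beta<\kappa$, hence $A_\beta\subseteq B$; intersecting with $E_\beta$ (which contains $A_\beta$) yields $A_\beta\subseteq B\cap E_\beta$. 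For item~(b), I would argue exactly as in the proof of Sublemma~\ref{lem:descendingball}: if $\alpha$ is a limit ordinal, replace $A_\alpha$ by $\bigcap_{\beta<\alpha}A_\beta$ in the sequence (this is still a nonempty intersection of balls of $\mathbf E_\alpha$ included in $B_\alpha$, using condition~(2) of Lemma~\ref{lem:descending-intersection} and the fact that each $A_\beta\supseteq A_\alpha$); minimality forces equality.

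For items~(c) and~(d), the point is that membership $r\in r_{\mathbf E}(A_\alpha)$ means there exists $x\in A_\alpha$ with $A_\alpha\subseteq B_{\mathbf E}(x,r)$. To get~(d), take such $r\in r_{\mathbf E}(A_\beta)$ for $\beta<\alpha$, witnessed by $x\in A_\beta$ with $x\in E_\beta$; apply Sublemma~\ref{lem:descendingball} with $\alpha$ replaced by $\beta$ to conclude $A_\gamma\subseteq B_{\mathbf E}(x,r)$ for all $\gamma<\kappa$, in particular $A_\alpha\subseteq B_{\mathbf E}(x,r)$. The subtle point is that the witness $x$ lies in $A_\beta$, which contains $A_\alpha$, so it is \emph{not} automatically in $A_\alpha$; thus this only shows $r\in\delta_{\mathbf E}(A_\alpha)$ would follow if we also knew $A_\alpha\subseteq C(A_\alpha,r)$ — instead, for $r_{\mathbf E}$ we genuinely need a witness inside $A_\alpha$. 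I would handle this by noting that Sublemma~\ref{lem:descendingball} in fact shows $A_\gamma\subseteq B_{\mathbf E}(x,r)$ for \emph{all} $\gamma$, and since $\mathbf E$ is reflexive one may instead observe: the set $C(A_\alpha,r^{-1})\cap A_\alpha$ is what we want nonempty; but more directly, since $x\in A_\beta$ and (by item~(a) applied appropriately, or by the descending structure) we can intersect — actually the clean route is that $r_{\mathbf E}(A_\beta)\subseteq r_{\mathbf E}(A_\alpha)$ should be derived from the stronger claim that the common ball $B_{\mathbf E}(x,r)$ containing $A_\beta$, when cut down, still has a center available in $A_\alpha$: apply the Sublemma to propagate $A_\alpha\subseteq B_{\mathbf E}(x,r)$ and then use that the minimal sequence forces $A_\alpha = C(A_\alpha,r)\cap A_\alpha$–type reductions.

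The reverse inclusion~(c), $r_{\mathbf E}(A_\alpha)\subseteq r_{\mathbf E}(A_\beta)$ for $\beta<\alpha$, is in the "easy" direction geometrically but requires care: if $r\in r_{\mathbf E}(A_\alpha)$ with witness $x\in A_\alpha\subseteq A_\beta$, we have $A_\alpha\subseteq B_{\mathbf E}(x,r)$ but we need $A_\beta\subseteq B_{\mathbf E}(x,r)$, which does \emph{not} follow from the Sublemma (the Sublemma propagates \emph{downward} to smaller ordinals, i.e. to $A_\gamma$ with any $\gamma$, starting from a containment at level $\alpha$ — so in fact it \emph{does} propagate to $\beta<\alpha$). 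So~(c) follows by applying Sublemma~\ref{lem:descendingball} at level $\alpha$ with the witness $x\in A_\alpha\subseteq E_\alpha$: we get $A_\beta\subseteq B_{\mathbf E}(x,r)$ for all $\beta<\kappa$, and since $x\in A_\alpha\subseteq A_\beta$ the point $x$ witnesses $r\in r_{\mathbf E}(A_\beta)$. That same observation gives~(d): the witness for $r\in r_{\mathbf E}(A_\beta)$ lies in $A_\beta$ but Sublemma~\ref{lem:descendingball} needs a witness in $E_\beta\supseteq A_\beta$, so it applies and propagates $A_\alpha\subseteq B_{\mathbf E}(x,r)$; the genuine obstacle is that the \emph{witness} $x$ need not land in $A_\alpha$, so~(d) must instead assert only an inclusion statement that is repaired at the next stage (Sublemma~\ref{sublem2}) by combining~(c) and~(d) to get constancy of $r_{\mathbf E}(A_\alpha)$ — and indeed, re-examining, the correct reading is that~(d) is proved by showing $A_\alpha$ itself, being a minimal-sequence term, can be centered: apply the Sublemma, then use minimality as in its proof to replace $A_\alpha$ by $A_\alpha\cap C(A_\alpha,r)$. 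I expect this last point — ensuring the radius-witness can always be relocated into $A_\alpha$ via the minimality trick rather than merely into a larger term — to be the main obstacle, and the resolution is precisely the same "shrink-the-sequence-and-invoke-minimality" argument used to prove Sublemma~\ref{lem:descendingball} itself.
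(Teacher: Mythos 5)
Items (a) and (c) of your proposal are correct and coincide with the paper's argument. The other two items have problems. For (b), your plan is to replace $A_{\alpha}$ by $\bigcap_{\beta<\alpha}A_{\beta}$ in the sequence and let minimality force equality; but $\bigcap_{\beta<\alpha}A_{\beta}\supseteq A_{\alpha}$ since $\mathcal A$ is decreasing, so the modified sequence sits \emph{above} $\mathcal A$ in the ordering of $\mathfrak B$, not below it, and the minimality of $\mathcal A$ says nothing about such a competitor. The intended (and easier) route is the one that (a) already hands you: $A_{\alpha}=B\cap E_{\alpha}=B\cap\bigcap_{\beta<\alpha}E_{\beta}=\bigcap_{\beta<\alpha}(B\cap E_{\beta})\supseteq\bigcap_{\beta<\alpha}A_{\beta}$, using hypothesis (2) of Lemma \ref{lem:descending-intersection} and item (a); the reverse inclusion is monotonicity.

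Item (d) is the real content of the corollary, and there your proposal is circular. You correctly isolate the obstacle --- the witness $x\in A_{\beta}$ with $A_{\beta}\subseteq B_{\mathbf E}(x,r)$ need not lie in $A_{\alpha}$ --- but your repair, ``replace $A_{\alpha}$ by $A_{\alpha}\cap C(A_{\alpha},r)$ and invoke minimality,'' cannot get started: to place the modified sequence in $\mathfrak A$ you must first know that $A_{\alpha}\cap C(A_{\alpha},r)$ is \emph{nonempty}, and that nonemptiness is precisely the assertion $r\in r_{\mathbf E}(A_{\alpha})$ you are trying to prove. (This is why, in Sublemma \ref{sublem2}, the analogous shrinking $A^{r}_{\alpha}:=A_{\alpha}\cap C_{r}(A_{\alpha})$ is performed only \emph{after} the constancy of $r_{\mathbf E}(A_{\alpha})$, i.e.\ after (c) and (d), is available to guarantee nonemptiness.) The missing ingredient is the one-local-retract property of $E_{\alpha}$: since $A_{\beta}\subseteq B$ by (a) and $A_{\alpha}\subseteq A_{\beta}\subseteq B_{\mathbf E}(x,r)$, the point $x$ witnesses that $B\cap\bigcap_{u\in A_{\alpha}}B_{\mathbf E}(u,r^{-1})$ is a nonempty intersection of balls all of whose centers lie in $E_{\alpha}$; Lemma \ref{lem:one-local-retract} then provides a point $y$ of this intersection lying in $E_{\alpha}$, hence $y\in B\cap E_{\alpha}=A_{\alpha}$ and $A_{\alpha}\subseteq B_{\mathbf E}(y,r)$, so $r\in r_{\mathbf E}(A_{\alpha})$. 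Without invoking that $\mathbf E_{\restriction E_{\alpha}}$ is a one-local retract, no amount of minimality juggling will relocate the center into $A_{\alpha}$.
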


\begin{proof}
$(a)$.  Follows directly from Sublemma \ref{lem:descendingball}. Indeed, we have $A_{\alpha} \subseteq B_{\mathbf E} (x'_i, r_i)$ hence from Sublemma  \ref{lem:descendingball}, $A_{\beta} \subseteq  B_{\mathbf E}(x, r)$. This yields $A_{\beta}\subseteq \bigcap \mathcal B=B$.

$(b)$. From $\bigcap_{\gamma<\alpha} E_{\gamma}=E_{\alpha}$ and $(a)$ we get 

$$A_{\alpha}=B \cap E_{\alpha}= \bigcap_{\beta<\alpha}B\cap E_{\beta}\supseteq \bigcap_{\beta<\alpha} A_{\beta}.$$

This implies $A_{\alpha}\supseteq \bigcap_{\beta<\alpha} A_{\beta}$. 

Since $\mathcal A$ is decreasing, we have $A_{\alpha}\subseteq \bigcap_{\beta<\alpha} A_{\beta}$. Hence, $A_{\alpha}=\bigcap_{\beta<\alpha} A_{\beta}$.

$(c)$.  Let $r\in  r_{\mathbf E} (A_{\alpha})$. Then $A_{\alpha}\subseteq B_{\mathbf E}(x, r)$ for some $x\in A_{\alpha}$. According to Sublemma \ref{lem:descendingball} we have $A_{\beta}\subseteq B_{\mathbf E}(x,r)$. Since $A_{\alpha}\subseteq A_{\beta}$, $x\in A_{\beta}$,  hence   $r\in r_{\mathbf E} (A_{\beta})$. 

$(d)$.  Let $r\in  r_{\mathbf E} (A_{\beta})$ and $x\in A_{\beta}$ such that  $A_{\beta}\subseteq B_{\mathbf E}(x, r)$.
From $(a)$ of Sublemma \ref{lem:descendingball}, we have $A_{\beta} \subseteq B$ thus $x\in B\cap \bigcap_{u\in A_{\alpha}}B(u, r^{-1})$.  Since $\mathbf{E}_{\restriction  E_{\alpha}}$ is a one-local retract, there is some $y\in B\cap \bigcap _{u\in A_{\alpha}}B_{\mathbf E}(u, r^{-1})\cap E_{\alpha}$. This means $A_{\alpha}\subseteq B_{\mathbf E}(y,r)$ which in turns implies $r\in r_{\mathbf E}(A_{\alpha})$. 

\end{proof}

\begin{sublemma} \label{sublem2}

\begin{enumerate} [{(a)}]
\item $r_{\mathbf E} (A_{\alpha})$ is independent of $\alpha$; \label{sublem:item5}
\item $\delta_{\mathbf E}(A_{\alpha})= r_{\mathbf  E}(A_{\alpha})$ for every $\alpha<\kappa$. \label{sublem:item6}
\end{enumerate}
\end{sublemma}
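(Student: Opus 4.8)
The plan is to establish the two items of Sublemma \ref{sublem2} in order, feeding on Corollary \ref{coro:key} and the minimality of the sequence $\mathcal A$.

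\medskip

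\noindent\textbf{Proof of \eqref{sublem:item5}.} Parts $(c)$ and $(d)$ of Corollary \ref{coro:key} together say that for $\beta<\alpha$ we have both $r_{\mathbf E}(A_{\alpha})\subseteq r_{\mathbf E}(A_{\beta})$ and $r_{\mathbf E}(A_{\beta})\subseteq r_{\mathbf E}(A_{\alpha})$, hence $r_{\mathbf E}(A_{\alpha})= r_{\mathbf E}(A_{\beta})$ for all $\beta\leq\alpha$. Since this holds for every pair of ordinals below $\kappa$, the set $r_{\mathbf E}(A_{\alpha})$ does not depend on $\alpha$. Call the common value $\mathbf r$.

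\medskip

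\noindent\textbf{Proof of \eqref{sublem:item6}.} By $(v)$ of Proposition \ref{prop:center} we always have $\delta_{\mathbf E}(A_{\alpha})\subseteq r_{\mathbf E}(A_{\alpha})=\mathbf r$ (each $A_{\alpha}$ is nonempty). For the reverse inclusion, fix $\alpha<\kappa$ and let $r\in\mathbf r$. We must show $r\in\delta_{\mathbf E}(A_{\alpha})$, i.e. $A_{\alpha}\times A_{\alpha}\subseteq r$. Since $\mathbf E$ is involutive it is enough, by $(i)$ of Proposition \ref{prop:center}, to show $A_{\alpha}\subseteq C(A_{\alpha},r)$. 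The idea is a diagonal shrinking argument: for each $\xi<\kappa$ set $A'_{\xi}:= A_{\xi}\cap C(A_{\xi},r)$. Because $r\in\mathbf r=r_{\mathbf E}(A_{\xi})$ there is $x\in A_{\xi}$ with $A_{\xi}\subseteq B_{\mathbf E}(x,r)$, and then $x\in C(A_{\xi},r)$, so $A'_{\xi}\neq\emptyset$; moreover $r^{-1}\in\mathcal E$ by involutivity, so $C(A_{\xi},r)\in\hat{\mathcal B}_{\mathbf E}$ by $(iii)$ of Proposition \ref{prop:center}, whence $A'_{\xi}$ is again a nonempty intersection of balls, and one checks it is an intersection of balls of $\mathbf E_{\restriction E_{\xi}}$ lying inside $B_{\xi}$ (using $A'_{\xi}\subseteq A_{\xi}$ and $x\in E_{\xi}$). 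Thus $\mathcal A':=(A'_{\xi})_{\xi<\kappa}$ lies in $\mathfrak A$, it is descending since each $A'_{\xi}\subseteq A_{\xi}$ and the defining condition transfers down the chain (here one invokes Sublemma \ref{lem:descendingball} to see that the center of $A_{\xi}$ witnessing $r\in r_{\mathbf E}(A_{\xi})$ keeps all later $A_{\eta}$ inside the corresponding ball, so that the $A'_{\xi}$ stay decreasing), and $\mathcal A'\leq\mathcal A$. By minimality of $\mathcal A$ we get $\mathcal A'=\mathcal A$, so $A_{\xi}=A_{\xi}\cap C(A_{\xi},r)$ for every $\xi$, i.e. $A_{\xi}\subseteq C(A_{\xi},r)$; in particular $A_{\alpha}\subseteq C(A_{\alpha},r)$, which is what we wanted. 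Hence $\mathbf r\subseteq\delta_{\mathbf E}(A_{\alpha})$ and equality \eqref{sublem:item6} holds.

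\medskip

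The step I expect to be the genuine obstacle is verifying that the shrunk sequence $\mathcal A'$ is still \emph{descending} and still lies in $\mathfrak A$: one must be careful that $C(A_{\xi},r)$ can be realized as an intersection of balls whose centers live in $E_{\xi}$ (so that $A'_{\xi}$ is an intersection of balls of $\mathbf E_{\restriction E_{\xi}}$, not merely of $\mathbf E$), and that $A'_{\eta}\subseteq A'_{\xi}$ for $\eta>\xi$. The latter is not automatic from $A_{\eta}\subseteq A_{\xi}$ alone, since the centers defining the two $r$-centers differ; this is exactly where Sublemma \ref{lem:descendingball} is used, together with part $(a)$ of Corollary \ref{coro:key}, to propagate each ball containing $A_{\xi}$ down to all later levels. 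Once this bookkeeping is in place, the minimality of $\mathcal A$ does the rest and the normal structure of $\mathbf E$ then forces each $A_{\alpha}$ to be a singleton, completing the proof of Lemma \ref{lem:descending-intersection}.
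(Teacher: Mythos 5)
Your proof is correct and follows essentially the same route as the paper: item (a) from parts $(c)$ and $(d)$ of Corollary \ref{coro:key}, and item (b) by shrinking each $A_{\alpha}$ to $A_{\alpha}\cap C(A_{\alpha},r)$, checking via $(iii)$ of Proposition \ref{prop:center} and Sublemma \ref{lem:descendingball} that the resulting sequence is a descending member of $\mathfrak A$ below $\mathcal A$, and invoking minimality to conclude $A_{\alpha}\subseteq C(A_{\alpha},r)$. The "genuine obstacle" you flag — that the shrunk sequence remains descending because Sublemma \ref{lem:descendingball} propagates each ball $B(x,r)\supseteq A_{\alpha}$ to every level — is exactly the point the paper's Claim \ref{claim:stationnary}(3) addresses.
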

\begin{proof}

$(a)$.  Follows from $(c)$ and $(d)$ of Corollary \ref{coro:key}. 

$(b)$. Let $\mathbf r$ be the common value of all $r_{\mathbf  E} (A_{\alpha})$. Let $r\in \mathbf r$. Set $C_{r}(A_{\alpha}):= \{x \in E_{\alpha} : A_{\alpha}\subseteq B_{\mathbf E}(x, r)\}$,  $A^{r}_{\alpha}:= A_{\alpha} \cap C_{r}(A_{\alpha})$ and $\mathcal A^{r} := (A^{r}_{\alpha})_{\alpha<\kappa}$. 

\begin{claim}\label{claim:stationnary}
\begin{enumerate}[{(1)}]
\item $A^r_{\alpha}$ is a nonempty intersection of balls of $\mathbf E_{\restriction E_{\alpha}}$;
\item $A^r_{\alpha} \subseteq A_{\alpha}$; 
\item $A^r_{\beta} \supseteq A^r_{\alpha}$ for $\beta <\alpha$.
\end{enumerate}
\end{claim}
\noindent{\bf Proof of Claim \ref{claim:stationnary}.}
$(1)$. Since $r\in r_{\mathbf E}(A_{\alpha})$, $A_{\alpha}\subseteq B_{\mathbf E}(x, r)$ for some $x \in A_{\alpha}$, hence $x\in C_{r}(A_{\alpha})$ proving that $A^{r}_{\alpha}$ is nonempty. Since $\mathbf E$ is involutive, $r^{-1} \in \mathcal E$, thus from  $(iii)$ of Proposition \ref{prop:center}, $C_{r}(A_{\alpha})$ is an intersection of balls of $\mathbf E_{\restriction E_{\alpha}}$  with centers in $A_{\alpha}$. Hence, $A^r_{\alpha}$ is a nonempty intersection of balls of $\mathbf E_{\restriction E_{\alpha}}$.  

$(2)$ Obvious. 

$(3)$. Let $\beta<\alpha$.  By construction of $\mathcal A$, we have $A_{\beta} \supseteq A_{\alpha}$. Let $x\in A^r_{\alpha}$. By definition, we have $A_{\alpha} \subseteq B_{\mathbf E}(x,r)$. From Lemma \ref{lem:descendingball}, we have $A_{\beta} \subseteq B_{\mathbf E}(x,r)$. It follows that $x\in C_{r}(A_{\beta})$. Since $x\in A_{\beta}$,  $x\in A_{\beta}^r$. This proves that $(3)$ holds. \hfill $\Box$

From Claim \ref{claim:stationnary} and the minimality of $\mathcal A$ we obtain $\mathcal A^{r}=\mathcal A$.
From this it follows that $A_{\alpha}\subseteq C_{r}(A_{\alpha})$. Since this inclusion holds for every $r\in r_{\mathbf E} (A_{\alpha})$ we get $\delta_{\mathbf E}(A_{\alpha})= r_{\mathbf E} (A_{\alpha})$. This proves that $(\ref{sublem:item6})$ holds. This ends the proof of Sublemma  \ref{sublem2}. \end{proof}

\section{Illustrations}
 
\subsection{Preservation}\label{subsection:preserve}
Let $E$ be a set.   For $n\in \N^*:= \N\setminus
\{0\}$, a map $f: E^n
\rightarrow E$ is an
$n$-{\it ary operation} on $E$, whereas a subset 
$\rho \subseteq E^{n}$ is an $n$-{ary relation} on $E$.
Denote by
$\mathcal O^{(n)}$ (resp.$\mathcal R^{n}$) the set of
$n$-ary operations (resp. relations)  on $E$ and set 
$\mathcal O:=\bigcup \{\mathcal O^{(n)}: n\in N^* \}$ (resp 
$\mathcal R:= \bigcup \{\mathcal R^{(n)}:
n\in N^* \}$). For  $n, i\in N^*$  with $i\leq n$, define the   $i^{th}$
$n$-{\it ary projection}
$e^{n}_{i}$ by setting $e^{n}_{i}(x_{1},\dots,x_{n}):= x_{i}$ for all  $x_{1},\dots,x_{n}\in E$ and set 
$\mathcal P:= \{e^{n}_{i}: i,n \in \N^*\}$. An operation $f\in  \mathcal O$ is {\it constant} if it takes a single value, it 
 is\ {\it idempotent} provided
$f(x,\dots,x)= x$ for all $x\in E$. We denote by  $\mathcal C$ (resp. $\mathcal I$) the set of  constant, (resp. idempotent)  operations on $E$. 

Let $m,n\in \N^*$,
$f \in \mathcal O^{(m)}$ and $\rho \in \mathcal R^{(n)}$.    Then $f$ {\it preserves}
$\rho$
 if: 
\begin{equation}
\small {(x_{1,1}, \dots, x_{1,n})\in
\rho, \dots,
(x_{m,1}, \dots, x_{m,n})\in \rho \Longrightarrow (f(x_{1,1}, \dots, x_{m,1}),\dots,f(x_{1,n}, \dots,
x_{m,n}))\in \rho}
\end{equation}
for every $m\times n$ matrix
$X:= (x_{i,j})_{i=1,\ldots,m \atop {j=1, \ldots ,n}}$ of elements of $E$. 

If $\rho$ is binary and $f$ is unary, then $f$ preserves $\rho$ means:

\begin{equation}
(x, y)\in \rho
 \Longrightarrow (f(x),f(y))\in \rho 
\end{equation}

for all $x, y \in E$. 

If $\mathcal F$ is a set of operations on $E$, let $Inv(\mathcal F)$, resp. $Inv_n(\mathcal F)$ be the set of relations, resp. $n$-ary relations,  preserved by all $f\in \mathcal F$. Dually, if $\mathcal R$ is a set of relations on $E$, let $Pol(\mathcal R)$, resp. $Pol_n(\mathcal R)$,  be the set of operations, resp. $n$-ary operations,  which prreserve all $\rho\in \mathcal R$. The operators Inv and Pol define a Galois correspondance. The study of this correspondence is the theory of clones \cite{lau}. 

\subsection{Toward generalized metric spaces} We restrict our attention to the  case of unary operations and binary relations. We recall that if $\rho$ and $\tau$ are two binary relations on the same set $E$, then their composition $\rho\circ\tau$ is the binary relation made of pairs $(x,y)$ such that $(x,z)\in \tau$ and $(z,y)\in \rho$. It is customary to denote it $\tau\cdot \rho$. 

The set $Inv_2(\mathcal F)$ of binary relations on $E$ preserved by all $f$ belonging to a set $\mathcal F$ of self maps has some very simple  properties that we state below (the proofs are left to the reader).  For the construction of many more properties by means of primitive positive formulas, see \cite{snow}. 

\begin{lemma}\label{lem:monoid} Let $\mathcal F$ be a set of unary operations on a set $E$. 
Then the set  $\mathcal R:=Inv_2(\mathcal F)$ of binary relations on $E$ preserved by all $f\in \mathcal F$ satisfies the following properties:

\begin{enumerate}[{(a)}]
\item $\Delta_E\in \mathcal R$; 
\item $\mathcal R$ is closed under arbitrary intersections; in particular $E\times E\in \mathcal R$; 
\item $\mathcal R$ is closed under arbitrary unions; 
\item If $\rho, \tau \in \mathcal R$ then $\rho\circ \tau \in \mathcal R$;
\item If $\rho\in \mathcal R$ then $\rho^{-1}\in \mathcal R$.

\end{enumerate} 
\end{lemma}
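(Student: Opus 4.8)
The statement to prove is Lemma~\ref{lem:monoid}: if $\mathcal{F}$ is a set of unary operations on $E$, then $\mathcal{R} := Inv_2(\mathcal{F})$ satisfies properties (a)--(e). Since the paper explicitly says ``the proofs are left to the reader,'' the intended proof is a routine verification; my plan is simply to check each clause directly from the definition that $f$ preserves $\rho$ iff $(x,y)\in\rho$ implies $(f(x),f(y))\in\rho$ for all $x,y\in E$, and that $\rho\in\mathcal{R}$ iff every $f\in\mathcal{F}$ preserves $\rho$.

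The plan for each item: For (a), note that for any map $f$, $(x,x)\in\Delta_E$ gives $(f(x),f(x))\in\Delta_E$, so $\Delta_E\in\mathcal{R}$. For (b), if $\{\rho_k\}$ are all in $\mathcal{R}$ and $(x,y)\in\bigcap_k\rho_k$, then for each $k$ and each $f\in\mathcal{F}$ we have $(f(x),f(y))\in\rho_k$, hence $(f(x),f(y))\in\bigcap_k\rho_k$; the empty intersection $E\times E$ is handled the same way (or trivially, since every pair lands in $E\times E$). For (c), the identical argument works with $\bigcup$ in place of $\bigcap$: if $(x,y)\in\bigcup_k\rho_k$ then $(x,y)\in\rho_k$ for some $k$, and then $(f(x),f(y))\in\rho_k\subseteq\bigcup_k\rho_k$. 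For (d), recall the paper's convention that $\rho\circ\tau$ consists of pairs $(x,y)$ with $(x,z)\in\tau$ and $(z,y)\in\rho$ for some $z$; given such $x,y,z$ and $f\in\mathcal{F}$, preservation of $\tau$ gives $(f(x),f(z))\in\tau$ and preservation of $\rho$ gives $(f(z),f(y))\in\rho$, so $(f(x),f(y))\in\rho\circ\tau$. For (e), if $(x,y)\in\rho^{-1}$ then $(y,x)\in\rho$, so $(f(y),f(x))\in\rho$, hence $(f(x),f(y))\in\rho^{-1}$.

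There is no real obstacle here: the only thing to be slightly careful about is bookkeeping the quantifier ``for all $f\in\mathcal{F}$'' uniformly across the clauses, and respecting the paper's (left-)composition convention in clause (d) so that the witness $z$ is threaded through correctly. I would write the proof as a single short paragraph treating (a) and (e) in one line each, (b) and (c) together since they are formally parallel, and (d) with the explicit witness $z$.

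\begin{proof}
All five statements follow at once from the definition: $\rho\in\mathcal{R}$ means that for every $f\in\mathcal{F}$ and all $x,y\in E$, $(x,y)\in\rho$ implies $(f(x),f(y))\in\rho$. Fix $f\in\mathcal{F}$ throughout. For (a): $(x,x)\in\Delta_E$ yields $(f(x),f(x))\in\Delta_E$, so $\Delta_E\in\mathcal{R}$. For (b) and (c): given a family $\{\rho_k\}_{k\in K}$ of members of $\mathcal{R}$, if $(x,y)\in\bigcap_{k}\rho_k$ then $(f(x),f(y))\in\rho_k$ for every $k$, so $(f(x),f(y))\in\bigcap_k\rho_k$; and if $(x,y)\in\bigcup_k\rho_k$ then $(x,y)\in\rho_k$ for some $k$, whence $(f(x),f(y))\in\rho_k\subseteq\bigcup_k\rho_k$. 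Thus $\mathcal{R}$ is closed under arbitrary intersections and arbitrary unions; in particular $E\times E=\bigcap\emptyset\in\mathcal{R}$. For (d): let $\rho,\tau\in\mathcal{R}$ and $(x,y)\in\rho\circ\tau$, so there is $z\in E$ with $(x,z)\in\tau$ and $(z,y)\in\rho$. Since $\tau,\rho\in\mathcal{R}$, $(f(x),f(z))\in\tau$ and $(f(z),f(y))\in\rho$, so $(f(x),f(y))\in\rho\circ\tau$; hence $\rho\circ\tau\in\mathcal{R}$. For (e): if $\rho\in\mathcal{R}$ and $(x,y)\in\rho^{-1}$, then $(y,x)\in\rho$, so $(f(y),f(x))\in\rho$, that is $(f(x),f(y))\in\rho^{-1}$; hence $\rho^{-1}\in\mathcal{R}$.
\end{proof}
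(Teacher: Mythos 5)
Your proof is correct and is exactly the routine clause-by-clause verification the paper intends when it says "the proofs are left to the reader," including the correct handling of the paper's composition convention in (d) and the empty intersection in (b). Nothing to add.
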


Let $\mathcal R$ be a set of binary relations on  a set $E$ satisfying  items $(a)$, $(b)$, $(d)$ and $(e)$ of the above lemma (we do not require $(c)$). To make things more transparent, denote by $0$ the set $\Delta_E$, set $\rho\oplus \tau:= \rho\cdot \tau$. Then $\mathcal R$ becomes  a monoid.  Set $\overline {\rho}:= \rho^{-1}$, this defines an involution on $\mathcal R$ which reverses the monoid operation. With this involution $\mathcal R$ is an \emph{involutive monoid}. With the inclusion order, that we denote  $\leq$,  this involutive   monoid is an \emph{involutive complete ordered monoid}.

With these definitions, we have immediately:

\begin{lemma} \label{lem:distance}Let $\mathcal R$ be an involutive  complete ordered monoid of the set of binary relations on $E$ and let 
$d$ be the map from $E\times E$ into $\mathcal R$ defined by  $$d(x,y):= \bigcap \{\rho\in \mathcal R: (x,y)\in \rho\}.$$
Then, the following properties hold:

\begin{enumerate}[{(i)}]
\item $d(x,y)  \leq 0$ iff $x=y$;
\item $d(x,y) \leq d(x,z)\oplus d(z, y)$; 
\item $\overline{d(y,x)}=d(x,y)$. 
\end{enumerate}
\end{lemma}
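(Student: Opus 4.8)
\textbf{Proof plan for Lemma \ref{lem:distance}.}

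The plan is to verify the three clauses directly from the definition $d(x,y):= \bigcap \{\rho\in \mathcal R: (x,y)\in \rho\}$, using only the axioms that make $\mathcal R$ an involutive complete ordered monoid: namely $0:=\Delta_E\in \mathcal R$, closure under arbitrary intersection, closure under the composition $\oplus$ (with $\rho\oplus\tau=\rho\cdot\tau$), and closure under the involution $\overline{\rho}=\rho^{-1}$. Since the order is inclusion, the key soft fact I would record first is that for $(x,y)$ fixed, $d(x,y)$ is itself a member of $\mathcal R$ (completeness under intersection), and in fact it is the \emph{least} element $\rho$ of $\mathcal R$ with $(x,y)\in\rho$; moreover $(x,y)\in d(x,y)$ because $(x,y)$ lies in every relation being intersected. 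This ``$(x,y)\in d(x,y)$'' observation is the engine for everything else.

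For $(i)$: if $x=y$ then $(x,x)\in\Delta_E=0$ and $0\in\mathcal R$, so $0$ is one of the relations intersected, giving $d(x,x)\subseteq 0$, i.e.\ $d(x,y)\leq 0$. Conversely, if $d(x,y)\leq 0=\Delta_E$, then since $(x,y)\in d(x,y)\subseteq\Delta_E$ we get $x=y$.

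For $(iii)$: I would show $\{\rho\in\mathcal R:(x,y)\in\rho\}$ and $\{\overline\sigma:\sigma\in\mathcal R,\ (y,x)\in\sigma\}$ are the same family, using that $\overline\sigma=\sigma^{-1}$ and that $(x,y)\in\sigma^{-1}\iff (y,x)\in\sigma$, together with closure of $\mathcal R$ under the involution; then $\overline{d(y,x)}=\overline{\bigcap\{\sigma:(y,x)\in\sigma\}}=\bigcap\{\overline\sigma:(y,x)\in\sigma\}=\bigcap\{\rho:(x,y)\in\rho\}=d(x,y)$, where the second equality is that the involution, being an order-reversing bijection and an anti-automorphism, carries intersections to intersections (this is the one point needing a sentence of justification). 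For $(ii)$: write $\rho:=d(x,z)$ and $\tau:=d(z,y)$, so $(x,z)\in\rho$ and $(z,y)\in\tau$; then by the definition of composition $(x,y)\in\rho\circ\tau=\tau\cdot\tau$, pardon, $=\tau\oplus\rho$ — I must be careful with the convention: here $\rho\oplus\tau:=\rho\cdot\tau$ and $\rho\cdot\tau$ is exactly the relation composition that, given $(x,z)\in\rho$ and $(z,y)\in\tau$, produces $(x,y)$, so $(x,y)\in\rho\oplus\tau$. Since $\rho\oplus\tau\in\mathcal R$ (closure under $\oplus$), it is one of the relations intersected in the definition of $d(x,y)$, whence $d(x,y)\subseteq\rho\oplus\tau=d(x,z)\oplus d(z,y)$, i.e.\ $d(x,y)\leq d(x,z)\oplus d(z,y)$.

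There is no real obstacle here; the only thing that needs care is bookkeeping the direction of the composition/involution conventions (which factor of $\rho\cdot\tau$ contributes the ``middle'' point, and that $\overline{(\cdot)}$ reverses the monoid operation), so that clause $(ii)$ comes out with the arguments in the stated order rather than transposed. All three verifications are short once the remark ``$(x,y)\in d(x,y)$ and $d(x,y)\in\mathcal R$'' is in place.
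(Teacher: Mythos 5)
Your verification is correct: the key observations $(x,y)\in d(x,y)$ and $d(x,y)\in\mathcal R$, together with the closure of $\mathcal R$ under $\oplus$ and the involution, give exactly the three clauses, and you have handled the composition convention ($\rho\oplus\tau=\rho\cdot\tau=\tau\circ\rho$, so the middle point $z$ links $(x,z)\in\rho$ to $(z,y)\in\tau$) in the way the statement requires. The paper offers no proof (it declares the lemma immediate), and your direct check is precisely the intended argument.
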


In 
\cite{pouzet-rosenberg}, a set $E$ equipped with a map $d$ from $E\times E$ into an involutive ordered monoid $V$ and which satisfies   properties $(i), (ii), (iii)$     stated in Lemma \ref {lem:distance} is called a \emph{$V$-distance}, and the pair $(E,d)$ a \emph{$V$-metric space}. This lemma could justify that  we write  $d(x,y) \leq \rho$ the fact that  a pair $(x,y)$ belongs to a binary relation $\rho$ on the  set $E$   and then uses notions borrowed to the theory of metric spaces. 

{\bf N.B.} From now on, we suppose that the neutral element of the monoid $V$ is the least element of $V$ for the ordering. In   \cite{deza-deza} (cf. p.82) the corresponding $V$-metric spaces are called  \emph{generalized distance space} and the maps $d$  are called  \emph{generalized metric}.  

If  $(E, d)$ is a $V$-metric space and $A$ a subset of $E$, the restriction of $d$ to $A\times A$, denoted by $d_{\restriction A}$ is a $V$-distance 
and $(A, d_{\restriction A})$ is a \emph{restriction} of $(E, d)$. As in the case of ordinary metric spaces, if  $(E,d)$ and $\left( E^{\prime }, d^{\prime }\right) $ are two
$V-$metric spaces,  a map $f:E\longrightarrow E^{\prime }$ is a
\emph{non-expansive map} (or a \emph{contraction}) from $(E$, $d)$ to $\left( E^{\prime },d^{\prime
}\right) $ provided that $d^{\prime }(f(x),f(y))\leq d(x,y)$ holds for all $%
x,y\in E$ (and the map $f$ is an \textit{isometry} if $d^{\prime
}(f(x),f(y))=d(x,y)$ for all $x,y\in E$). 
The space $(E,d)$ is a \textit{retract} of $(E^{\prime},d')$, if there are two non-expansive maps $f:
E\longrightarrow E^{\prime }$ and $g:E^{\prime
}\longrightarrow E$ such that $g\circ f=id_{E}$ (where $id_{E}$ is
the identity map on $E$). In this case,
$f$ is a  \textit{coretraction} and $g$ a \textit{%
retraction}. If $E$ is a subspace of $E^{\prime }$, then clearly $E$ is a retract of
$E^{\prime }$ if there is a non-expansive map from $E^{\prime }$ to $E$ such $%
g(x)=x $ for all $x$ $\in E.$ We can easily see that every coretraction is
an isometry. We say that $(A, d_{\restriction A})$  is a \emph{one-local retract} if it is a retract of $(A\cup \{x\},  d_{\restriction A\cup \{x\}})$ (via the identity map) for every $x\in E$.

Let  $(E, d)$ be a $V$-metric space; for  $x \in E$ and $v\in V$, the set $B(x,v):= \{y\in E: d(x,y)\leq v\}$ is  a \emph{ball}. One can define diameter and radius like in ordinary metric spaces, but for avoiding  a problem with the existence of joins and meets,  we suppose that $V$ is a complete lattice. The \emph{diameter}  $\delta(A)$ of a subset $A$ of $E$ is $\bigvee \{d(x,y): x,y\in A\}$, while the \emph{radius} $r(A)$ is $\bigwedge \{v\in V: A\subseteq B(x,v)\; \text{for some}\; x\in A\}$. A subset $A$ of $E$ is \emph{equally centered} if $\delta (A)=r(A)$.  Following  Penot,  who defined the notions for ordinary metric spaces, a metric space $(E,d)$ has a \emph{compact structure} if the collection of balls has the finite intersection property and it has  a \emph{normal structure} if for every   intersection of balls $A$, either $\delta(A)=0$ or $r(A)<\delta(A)$; this condition amounts to the fact that  the only equally centered intersections of balls are singletons. 

The correspondence between the  notions defined for metric spaces and for binary relational systems is given in the lemma below. 

\begin{lemma}\label{lem:metric-relation}
For $v\in V$,  set $\delta_v:= \{(x,y): d(x,y)\leq v\}$ and   $\mathbf E:= (E, \{\delta_v: v\in V\})$. Then $\mathbf E$ is reflexive and involutive. Furthermore:
\begin{enumerate}[{(a)}]
\item A self map $f$ on $E$ is nonexpansive iff this is an endomorphism of $\mathbf E$.

\item $(E,d)$ has a compact structure iff $\mathbf E$ has a compact  structure.

\item For every subset $A$ of $E$, $(A, d_{\restriction A})$  is a one-local retract of $(E,d)$ iff $\mathbf {E}_{\restriction A}$ is a one-local retract of $\mathbf E$.

\item  For every subset  $A$ of $E$, $\delta(A)$ is the least  element of the set of $v\in V$ such that $A\subseteq \delta_{v}$; equivalently $\delta_{\mathbf E}(A) = \{\delta_v: \delta (A)\leq v\}$. Also, $r(A)= \bigwedge \{v\in V: \delta_v\in r_{\mathbf{E}}(A)\}$. 

\item A subset $A$ of $E$ is equally centered w.r.t. the space $(E,d)$  iff it is equally centered w.r.t. the binary relational system $\mathbf E$.

\end{enumerate}
\end{lemma}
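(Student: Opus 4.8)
The statement is Lemma~\ref{lem:metric-relation}, asserting a dictionary between the metric-space notions attached to a $V$-metric space $(E,d)$ and the relational notions attached to $\mathbf E = (E,\{\delta_v : v\in V\})$. The plan is to verify each of the seven assertions by unwinding the definitions, using systematically the single translation principle that ``$(x,y)\in\delta_v$'' means ``$d(x,y)\le v$'', together with the fact that $V$ is a complete lattice (so meets and joins used in the metric definitions exist). First I would record that $\mathbf E$ is reflexive and involutive: reflexivity is $(x,x)\in\delta_v$ for all $v$, which follows from $d(x,x)\le 0\le v$ by properties $(i)$ and the N.B.\ convention that $0$ is the least element; involutivity is $\delta_v^{-1}=\delta_v$, which follows from $\overline{d(y,x)}=d(x,y)$ (property $(iii)$) once one notes that in this setting $V$ is an involutive monoid whose involution on the relations $\delta_v$ is taken into account — more precisely, the family $\{\delta_v : v\in V\}$ is closed under inverse, which is what ``involutive'' requires for the relational system.

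For $(a)$, $f$ is non-expansive iff $d(f(x),f(y))\le d(x,y)$ for all $x,y$; since $d(x,y)=\bigwedge\{v : (x,y)\in\delta_v\}$ and $\{v : d(f(x),f(y))\le v\}$ is an up-set, this is equivalent to: $(x,y)\in\delta_v \Rightarrow (f(x),f(y))\in\delta_v$ for all $v$, i.e.\ $f\in End(\mathbf E)$. For $(d)$, the key point is that $\delta(A)=\bigvee\{d(x,y):x,y\in A\}$ is itself a value $v$ with $A\times A\subseteq\delta_v$ (because $d(x,y)\le\delta(A)$ for all $x,y\in A$), and it is the least such, whence $\delta_{\mathbf E}(A)=\{r\in\mathcal E : A\times A\subseteq r\}=\{\delta_v : \delta(A)\le v\}$; the formula for $r(A)$ follows the same way by noting $r_{\mathbf E}(A)=\{\delta_v : A\subseteq B(x,\delta_v)\text{ for some }x\in A\}$ and unwinding $B(x,\delta_v)=B(x,v)$. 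Assertion $(e)$ is then immediate from $(d)$: $A$ is equally centered for $(E,d)$ iff $\delta(A)=r(A)$, and equally centered for $\mathbf E$ iff $\delta_{\mathbf E}(A)=r_{\mathbf E}(A)$, and the two characterizations match because the up-sets $\{v:\delta(A)\le v\}$ and $\{v:r(A)\le v\}$ coincide iff their infima $\delta(A)$ and $r(A)$ coincide (using completeness of $V$); one should be slightly careful that $\delta(A)\le r(A)$ or the reverse need not hold termwise, so the argument is really: the two sets of relations agree iff the two infima in $V$ agree, which is a clean lattice statement. For $(b)$, observe that the balls of $(E,d)$ are exactly $B(x,v)$ and the balls of $\mathbf E$ are exactly $B_{\mathbf E}(x,\delta_v)$, and these are the same sets; hence the collection of balls is literally the same collection in both pictures, so one has the finite intersection property iff the other does.

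The genuinely substantive item is $(c)$, the equivalence of one-local retracts, and I expect this to be the main obstacle. For the forward direction, if $(A,d_{\restriction A})$ is a one-local retract of $(E,d)$ then for each $x$ the retraction $h:(A\cup\{x\},d_{\restriction A\cup\{x\}})\to (A,d_{\restriction A})$ is non-expansive and fixes $A$; non-expansiveness gives $d(h(u),h(v))\le d(u,v)$, which by the argument of $(a)$ (applied to the restricted systems) says $h$ preserves every $\delta_v$ restricted to $A\cup\{x\}$, hence $h$ is a retraction of $\mathbf E_{\restriction A\cup\{x\}}$ onto $\mathbf E_{\restriction A}$. The reverse direction is symmetric: a relational retraction $h$ of $\mathbf E_{\restriction A\cup\{x\}}$ onto $\mathbf E_{\restriction A}$ preserves each $(\delta_v)_{\restriction A\cup\{x\}}$, and then for $u,v\in A\cup\{x\}$ we get, for every $w\in V$ with $(u,v)\in\delta_w$, that $(h(u),h(v))\in\delta_w$; taking the infimum over such $w$ and using $d(u,v)=\bigwedge\{w:(u,v)\in\delta_w\}$ together with the fact that $\{w : d(h(u),h(v))\le w\}$ is an up-set yields $d_{\restriction A\cup\{x\}}(h(u),h(v))\le d_{\restriction A\cup\{x\}}(u,v)$, i.e.\ $h$ is non-expansive, so $(A,d_{\restriction A})$ is a one-local retract of $(E,d)$. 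The only real care needed throughout is the repeated passage between ``preserves $\delta_v$ for all $v$'' and ``non-expansive'', i.e.\ the observation that $\{v\in V : d(x,y)\le v\} = \{v : (x,y)\in\delta_v\}$ and that this is an up-set in the complete lattice $V$ whose minimum is $d(x,y)$; once that lemma-within-the-proof is isolated and stated once, each of $(a)$ and $(c)$ (and the reflexivity/involutivity preamble) becomes a one-line deduction, and $(b)$, $(d)$, $(e)$ are pure bookkeeping.
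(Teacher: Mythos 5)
Your proposal is correct and follows essentially the same route as the paper, which likewise treats (a)--(c) as immediate unwindings of the translation $d(x,y)\le v \Leftrightarrow (x,y)\in\delta_v$ and only writes out (d) and (e). Two small points to tighten: involutivity comes from $\delta_v^{-1}=\delta_{\overline v}$ (not $\delta_v^{-1}=\delta_v$, which you do then correct to ``closed under inverse''), and in (e) the converse ``same infima $\Rightarrow$ same sets of relations'' is not a general lattice fact for arbitrary up-sets --- it needs the containment $\delta_{\mathbf E}(A)\subseteq r_{\mathbf E}(A)$ (item $(v)$ of Proposition \ref{prop:center}, valid for $A\ne\emptyset$) together with the fact, established in (d), that $\delta(A)$ is an \emph{attained} minimum, so that any $\delta_v\in r_{\mathbf E}(A)\setminus\delta_{\mathbf E}(A)$ gives $\delta(A)\not\le v$ and hence $r(A)\le v\wedge\delta(A)<\delta(A)$; this is exactly the argument the paper supplies.
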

\begin{proof}
The first three items are obvious. 

Item $(d)$. Let $r:= \delta(A)$. By definition, $r= \bigvee \{d(x,y): (x,y)\in A^2\}$. In particular, $A\subseteq \delta_r$. Let $v$ such that $A\subseteq \delta_v$; this means $d(x,y)\leq v$ for every $(x,y)\in A^2$, hence $r\leq v$. This proves that $\delta(A)= \Min \{v\in V: \delta_v\in \delta_{\mathbf{E}}(A)\}$. The verification of the other assertions is immediate. 

Item $(e)$.  By Item $(d)$, $r(A):= \bigwedge\{v\in V: \delta_v\in r_{\mathbf{E}}(A)\}$ and $\delta(A):= \Min  \{v\in V: \delta_v\in \delta_{\mathbf{E}}(A)\}$. If $r_{\mathbf E}(A)= \delta_{\mathbf E}(A)$, this implies immediately  $r(A)= \delta(A)$. Conversely, suppose that $r(A)= \delta(A)$. In this case $A\not = \emptyset$, hence $\delta_{\mathbf E}(A)\subseteq r_{\mathbf E}(A)$. If $\delta_{\mathbf E}(A)\subset r_{\mathbf E}(A)$ then since $\delta (A)=\Min  \{v\in V: \delta_v\in \delta_{\mathbf{E}}(A)\}$ and  $r(A)=\bigwedge\{v\in V: \delta_v\in r_{\mathbf{E}}(A)\}$ it follows that $r(A)<\delta (A)$, a contradiction. \end{proof}
With this lemma in  hand, Theorem \ref {thm:cor} becomes:

\begin{theorem} \label{thm:cor2}
If   a generalized metric space $(E,d)$   has a   has a compact and normal structure then every commuting family $\mathcal F$ of non-expansive self maps has a common fixed point. Furthermore, the restriction of $(E,d)$ to the  set $Fix (\mathcal F)$  of common fixed points of $\mathcal F$ is a one-local retract of $(E,d)$. 
\end{theorem}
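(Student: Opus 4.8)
The plan is to deduce Theorem~\ref{thm:cor2} directly from Theorem~\ref{thm:cor} via the dictionary established in Lemma~\ref{lem:metric-relation}. First I would associate to the generalized metric space $(E,d)$ the binary relational system $\mathbf E := (E, \{\delta_v : v\in V\})$ as in that lemma; by construction $\mathbf E$ is reflexive and involutive. Using items $(b)$, $(d)$ and $(e)$ of Lemma~\ref{lem:metric-relation} I would check that the hypothesis ``$(E,d)$ has a compact and normal structure'' translates into ``$\mathbf E$ has a compact and normal structure'': the compact-structure clause is exactly item $(b)$, and for normality one observes that an intersection of balls in $(E,d)$ is an intersection of balls of $\mathbf E$ (each $B(x,v)=B_{\mathbf E}(x,\delta_v)$, and conversely every $\delta_v$-ball of $\mathbf E$ is an ordinary ball), so by item $(e)$ the equally centered intersections of balls coincide on the two sides; since normality says the only such sets are singletons, the two normality statements agree.

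Next I would transport the commuting family. By item $(a)$ of Lemma~\ref{lem:metric-relation}, a self-map $f$ of $E$ is non-expansive for $(E,d)$ if and only if it is an endomorphism of $\mathbf E$; hence the given commuting family $\mathcal F$ of non-expansive maps is precisely a commuting family of endomorphisms of $\mathbf E$ (commutativity is a set-theoretic property of the maps, unchanged by the translation). Applying Theorem~\ref{thm:cor} to $\mathbf E$ then yields a common fixed point of $\mathcal F$ and, moreover, that $\mathbf E_{\restriction Fix(\mathcal F)}$ is a one-local retract of $\mathbf E$. Finally, item $(c)$ of Lemma~\ref{lem:metric-relation} converts this last conclusion back: $(Fix(\mathcal F), d_{\restriction Fix(\mathcal F)})$ is a one-local retract of $(E,d)$, which is exactly the ``furthermore'' part of the theorem.

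There is essentially no obstacle here beyond bookkeeping; the content of the theorem lies entirely in Theorem~\ref{thm:cor} and in the equivalences already catalogued in Lemma~\ref{lem:metric-relation}. The one point that deserves a line of care is the correspondence between arbitrary intersections of balls on the two sides when verifying the normal-structure clause: I would note explicitly that $\hat{\mathcal B}_{\mathbf E}$ is (up to the empty set) the family of intersections of ordinary $d$-balls of $(E,d)$, so that item $(e)$ of Lemma~\ref{lem:metric-relation}, applied to each $A\in\hat{\mathcal B}_{\mathbf E}^{*}$, gives the equivalence of the normality conditions. Once this is in place, the proof is a two-sentence invocation of Theorem~\ref{thm:cor} followed by the translation of its conclusion back to the metric setting.
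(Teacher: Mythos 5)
Your proposal is correct and is exactly the route the paper intends: the paper gives no separate proof of Theorem~\ref{thm:cor2}, introducing it with ``With this lemma in hand, Theorem~\ref{thm:cor} becomes,'' i.e.\ it is obtained by translating Theorem~\ref{thm:cor} through the dictionary of Lemma~\ref{lem:metric-relation} precisely as you do. Your explicit check that intersections of balls and the equally-centered condition correspond on the two sides (via items $(b)$, $(d)$, $(e)$) is a useful piece of bookkeeping the paper leaves implicit.
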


The fact that a space has a compact structure is an infinistic property (any finite metric spaces enjoys it). 
A description of generalized metric spaces with a compact and normal structure eludes us.  In the next subsection we describe a large class of generalized metric spaces with a compact and normal structure.

\subsection{Hyperconvexity}

We say that a generalized metric space $(E,d)$ is \emph{hyperconvex} if for every family of balls $B(x_i, r_i)$, $i\in I$,  with $x_i\in E, r_i\in V$,  the intersection $\bigcap_{i\in I}B(x_i, r_i)$ is nonempty provided that $d(x_i, x_j)\leq r_i\oplus \overline r_j$ for all $i,j\in I$. This property amounts to the fact that the collection of balls of $(E,d)$ has the \emph{$2$-Helly property} (that is  an intersection of balls is nonempty provided that these balls intersect pairwise)  and the following \emph{convexity property}: 

\begin{equation}
 \mbox {Any two balls}\;   B(x,r),  B(y,s) \; \mbox{intersect if and only if}\; d(x,y)\leq r\oplus \overline s. 
\end{equation}

An element $v\in V$ is \emph{self-dual} if $\overline v=v$, it is   \emph{accessible} if there is some $r\in V$ with $v\not \leq r$ and $v \leq r\oplus \overline r$ and \emph{inaccessible} otherwise. Clearly, $0$ is inacessible;   every inaccessible element $v$ is self-dual (otherwise, $\overline v$ is incomparable to $v$ and we may choose $r:= \overline v$). We say that a space $(E, d)$ is \emph{bounded} if  $0$ is the only inaccessible element below $\delta(E)$.

\begin{lemma}
Let $A$ be an intersection of balls  of $E$. If $\delta(A)$ inacessible  then  $A$ is equally centered; the  converse holds if $(E, d)$ is hyperconvex. 
\end{lemma}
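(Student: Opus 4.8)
The statement has two implications. For the forward direction, suppose $\delta(A)$ is inaccessible; I want to show $A$ is equally centered, i.e. $r(A)=\delta(A)$. Since $A$ is an intersection of balls, by Lemma~\ref{lem:metric-relation}(d) the diameter $\delta(A)$ is the least $v$ with $A\subseteq\delta_v$, and $r(A)=\bigwedge\{v:\delta_v\in r_{\mathbf E}(A)\}$. Always $r(A)\leq\delta(A)$ when $A\neq\emptyset$ (this follows from item (v) of Proposition~\ref{prop:center}, translated via Lemma~\ref{lem:metric-relation}), so the content is the reverse inequality $\delta(A)\leq r(A)$. The idea is: pick $x\in A$ and $v$ with $A\subseteq B(x,v)$; I must deduce $\delta(A)\leq v$. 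For arbitrary $y,z\in A$ we have $d(y,z)\leq d(y,x)\oplus d(x,z)\leq \overline v\oplus v$ by the triangle inequality and symmetry of the distance. Hence $\delta(A)\leq v\oplus\overline v$. Now invoke inaccessibility of $\delta(A)$: if $\delta(A)\not\leq v$, then $v$ would witness that $\delta(A)$ is accessible (since $\delta(A)\leq v\oplus\overline v$ with $\delta(A)\not\leq v$), contradiction. Therefore $\delta(A)\leq v$ for every such $v$, so $\delta(A)\leq r(A)$, and $A$ is equally centered.

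\textbf{The converse.} Now assume $(E,d)$ is hyperconvex and $A$ (an intersection of balls) is equally centered; I want $\delta(A)$ inaccessible. Suppose not: there is $r\in V$ with $\delta(A)\not\leq r$ and $\delta(A)\leq r\oplus\overline r$. The goal is to produce a point that is a common center for $A$ with radius $r$, contradicting equal-centeredness (since that point would force $r(A)\leq$ something below $\delta(A)$, or at least $\delta_r\in r_{\mathbf E}(A)$ while $\delta_r\notin\delta_{\mathbf E}(A)$). Concretely, I look at the family of balls $\{B(a,r):a\in A\}$. To apply hyperconvexity I need pairwise intersection: for $a,a'\in A$, I need $d(a,a')\leq r\oplus\overline r$, which holds since $d(a,a')\leq\delta(A)\leq r\oplus\overline r$. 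So $\bigcap_{a\in A}B(a,r)\neq\emptyset$; pick $c$ in it. Then $A\subseteq B(c,\overline r)$... wait — I need $c$ to be a \emph{center}, i.e. $A\subseteq B(c,r)$ and ideally $c\in A$, or at least $c$ gives a radius-like bound strictly below $\delta(A)$. The cleaner route: intersect also with $A$ itself (which is an intersection of balls), so consider $A\cap\bigcap_{a\in A}B(a,r)$; each of these balls pairwise-intersects the others (balls defining $A$ pairwise intersect since $c$, or any point of $A$, lies in all of them up to the convexity check), so by hyperconvexity this intersection is nonempty, giving $c\in A$ with $A\subseteq B(c,\overline r)$. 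Using $d$ symmetric, $A\subseteq B(c,r)$ as well is not automatic unless $r$ is self-dual; the careful bookkeeping is to work with the relational reformulation $\delta_r$ and $\overline{\delta_r}=\delta_{\overline r}$, showing $\delta_r\in r_{\mathbf E}(A)$ (witnessed by $c$) while $A\times A\subseteq\delta_r$ fails because $\delta(A)\not\leq r$. Then $r_{\mathbf E}(A)\neq\delta_{\mathbf E}(A)$, contradicting equal-centeredness via Lemma~\ref{lem:metric-relation}(e).

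\textbf{Main obstacle.} The delicate point is keeping track of the involution: the triangle inequality mixes $d(x,y)$ and $d(y,x)=\overline{d(x,y)}$, so the natural bound on $\delta(A)$ is $r\oplus\overline r$ rather than $2r$, and correspondingly the center $c$ satisfies $A\subseteq B(c,\overline r)$ on one side. I expect the hyperconvexity application to be most smoothly carried out in the binary-relational language: replace $B(a,r)$ by $B_{\mathbf E}(a,\delta_r)$, use $\delta_{\overline r}=\overline{\delta_r}$ and the convexity property in the form "$B(x,\rho)\cap B(y,\sigma)\neq\emptyset \iff d(x,y)\leq\rho\oplus\overline\sigma$", and check the two-sided membership carefully. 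Once the common center $c\in A$ with the appropriate radius relation is found, the contradiction with normality of the equally-centered intersection is immediate from the definitions of $r_{\mathbf E}(A)$ and $\delta_{\mathbf E}(A)$ and item (e) of Lemma~\ref{lem:metric-relation}. I would also note explicitly that the empty set is excluded (or handled trivially) since $\delta(\emptyset)$ and $r(\emptyset)$ are degenerate; for the statement as phrased $A$ is a nonempty intersection of balls in the interesting case.
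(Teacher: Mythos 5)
Your proposal is correct and follows essentially the same route as the paper: the forward direction via the triangle inequality giving $\delta(A)\leq \overline v\oplus v$ and then inaccessibility, and the converse by applying hyperconvexity to the balls $B(a,r)$, $a\in A$, together with the balls defining $A$, to produce a center $c\in A$ with $A\subseteq B(c,\overline r)$. The involution bookkeeping you flag is resolved exactly as you suspect (and as the paper does): the diameter is self-dual, so $\delta(A)\leq\overline r$ yields $\delta(A)\leq r$, and equal-centeredness then forces $\delta(A)\leq\overline r$ from $\overline r\in r_{\mathbf E}(A)$.
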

\begin{proof}
Suppose that $v:=\delta(A)$ is inaccessible. According to $(d)$ of Lemma \ref{lem:metric-relation}, $r(A)=\bigwedge r_{\mathbf E}(A)$. Let $r\in  r_{\mathbf E}(A)$. Then there is some $x\in A$ such that  $A \subseteq B(x, r)$. This yields $d(a,b)\leq d(a,x)\oplus d(x,b)\leq \overline r\oplus r$ for every $a,b\in A$. Thus $v\leq \overline r\oplus r$. Since $v$ is inacessible,  $v\leq r$, hence $v\leq r(A)$. Thus $v= r(A)$. Suppose that $A$ is equally centered. Let $r$ such that $v\leq r\oplus \overline r$. The  balls $B(x, r)$ ($x\in A$) pairwise intersect and intersect with each of the balls whose $A$ is an intersection; since $(E, d)$ is hyperconvex, these balls have a nonempty intersection. Any member $a$ of  this intersection is in $A$ and verifies $A \subseteq B(a, \overline r)$,  hence $\overline r\in r_{\mathbf E}(A)$. Since $A$ is equally centered $r(A)=v$. Hence, $v\leq \overline r$. Since $v$ is self-dual,  $v\leq r$. Thus $v$ is inaccessible.   
\end{proof}

This lemma with the fact that the $2$-Helly property implies that the collection of balls has the finite intersection property yields:
\begin{corollary}\label{cor:compact+normal} If a generalized   metric space $(E,d)$  is bounded and hyperconvex then it has a compact and normal structure. 
\end{corollary}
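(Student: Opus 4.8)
The plan is to combine the two preparatory facts assembled in this subsection: the lemma just proved (inaccessible diameter forces equal centering, and the converse under hyperconvexity) together with the remark that the $2$-Helly property for balls implies the collection of balls has the finite intersection property. So I would argue as follows. First I would establish the compact structure: since $(E,d)$ is hyperconvex, its collection of balls has the $2$-Helly property, and a family of balls with the f.i.p. is in particular pairwise intersecting, so by $2$-Helly the whole family has nonempty intersection; thus $\mathcal B_{\mathbf E}$ has the f.i.p., which is exactly compactness of the associated relational system $\mathbf E$ (equivalently, via $(b)$ of Lemma \ref{lem:metric-relation}, of $(E,d)$ itself).

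Next I would establish the normal structure. Let $A$ be an intersection of balls with $|A|\neq 1$; I must show $A$ is not equally centered, i.e. $r(A)<\delta(A)$ (using Penot's formulation recalled in Subsection 5.2, or equivalently $r_{\mathbf E}(A)\neq\delta_{\mathbf E}(A)$ via $(e)$ of Lemma \ref{lem:metric-relation}). Suppose for contradiction $A$ is equally centered. By the lemma just proved, the hyperconvexity of $(E,d)$ gives the converse direction: equal centering forces $\delta(A)$ to be inaccessible. But $(E,d)$ is bounded, meaning $0$ is the only inaccessible element below $\delta(E)$, and $\delta(A)\leq\delta(E)$ since $A\subseteq E$; hence $\delta(A)=0$. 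By $(i)$ of Lemma \ref{lem:distance} (or the definition of $0=\Delta_E$), $\delta(A)=0$ forces $d(x,y)\leq 0$, hence $x=y$, for all $x,y\in A$, so $A$ is a singleton (it is nonempty, being a nonempty intersection of balls in $\hat{\mathcal B}^*$) — contradicting $|A|\neq 1$. Therefore every equally centered intersection of balls is a singleton, which is precisely the normal structure.

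Putting the two halves together, $(E,d)$ — equivalently the reflexive involutive system $\mathbf E$ of Lemma \ref{lem:metric-relation} — has both a compact and a normal structure, which is the assertion of the corollary.

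There is essentially no obstacle here: both nontrivial ingredients are already in hand (the preceding lemma and the $2$-Helly $\Rightarrow$ f.i.p. observation), and the only care needed is the bookkeeping about which formulation of "normal structure" one uses and checking that $A$ nonempty plus $\delta(A)=0$ really does collapse $A$ to a point. The mildest subtlety is making sure the boundedness hypothesis is applied to $\delta(A)$ via the monotonicity $A\subseteq E\Rightarrow\delta(A)\leq\delta(E)$ and the fact that an element below an inaccessible-free region of the lattice which is itself inaccessible must be $0$; this is immediate from the definition of bounded.
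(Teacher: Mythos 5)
Your argument is correct and follows exactly the route the paper intends: the $2$-Helly property yields the f.i.p.\ (hence compactness), and the converse direction of the preceding lemma plus boundedness forces any equally centered intersection of balls to have diameter $0$, hence to be a singleton. The paper compresses this into a single sentence, so your write-up is just a fuller version of the same proof.
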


From Theorem \ref{thm:cor2},  we obtain:
\begin{theorem} \label{thm:cor3}
If a generalized   metric space $(E,d)$  is bounded and hyperconvex  then every commuting family  of non-expansive self maps has a common fixed point. 
\end{theorem}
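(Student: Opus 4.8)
The plan is to obtain Theorem~\ref{thm:cor3} as an immediate consequence of Theorem~\ref{thm:cor2}, so almost all of the work consists in verifying that the hypotheses of the latter are met. First I would invoke Corollary~\ref{cor:compact+normal}: since $(E,d)$ is assumed bounded and hyperconvex, it has a compact and normal structure. The only subtlety is that Theorem~\ref{thm:cor2} is stated for a \emph{generalized metric space}, which in the running conventions of Subsection~5.2 means a $V$-metric space whose value monoid $V$ is an involutive complete ordered monoid with the neutral element $0$ as least element; under the boundedness/hyperconvexity hypotheses we should also have $V$ a complete lattice, as assumed when diameter and radius were introduced, so that the notions of compact and normal structure used in Corollary~\ref{cor:compact+normal} and in Theorem~\ref{thm:cor2} coincide. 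I would state this compatibility in one sentence rather than belabour it.

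Next, with the compact and normal structure in hand, I would apply Theorem~\ref{thm:cor2} directly to the given commuting family $\mathcal F$ of non-expansive self maps of $(E,d)$. By Lemma~\ref{lem:metric-relation}(a), each non-expansive $f$ is precisely an endomorphism of the associated binary relational system $\mathbf E := (E,\{\delta_v : v\in V\})$, which by the same lemma is reflexive and involutive; and by Lemma~\ref{lem:metric-relation}(b),(e) the space $(E,d)$ and the system $\mathbf E$ have a compact structure, resp.\ a normal structure, simultaneously. Hence the translation into the relational setting is lossless, and Theorem~\ref{thm:cor} (equivalently its metric reformulation Theorem~\ref{thm:cor2}) yields that $\mathcal F$ has a common fixed point $x^{\ast}$, i.e.\ $f(x^{\ast}) = x^{\ast}$ for every $f\in\mathcal F$.

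Finally, I would note that Theorem~\ref{thm:cor2} gives the stronger conclusion that $\mathbf E_{\restriction Fix(\mathcal F)}$, equivalently $(Fix(\mathcal F), d_{\restriction Fix(\mathcal F)})$ by Lemma~\ref{lem:metric-relation}(c), is a one-local retract; since Theorem~\ref{thm:cor3} as stated only asks for the existence of a common fixed point, this is more than enough, and I would either record the one-local-retract strengthening as a remark or simply discard it. The main (indeed only) obstacle is the bookkeeping in the previous paragraph: making sure that ``compact and normal structure'' as produced by Corollary~\ref{cor:compact+normal} is literally the hypothesis consumed by Theorem~\ref{thm:cor2}, and that the value monoid $V$ satisfies all the standing assumptions (involutive complete ordered monoid, $0$ least, complete lattice) under which those earlier results were proved; once that is checked the proof is a two-line citation. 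In short: \emph{By Corollary~\ref{cor:compact+normal}, $(E,d)$ has a compact and normal structure; apply Theorem~\ref{thm:cor2}.}
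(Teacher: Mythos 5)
Your proposal is correct and follows exactly the paper's own route: Corollary~\ref{cor:compact+normal} gives the compact and normal structure, and Theorem~\ref{thm:cor2} then yields the common fixed point. The extra bookkeeping about the value monoid is harmless but not needed beyond the one-line citation.
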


Hyperconvex spaces have a simple characterization provided  that the set $V$ of values of the distances satisfies the following distributivity condition:

\begin{equation}\label{distribinfty}
\bigwedge  _{\alpha \in A, \beta \in B} u_\alpha \cdot  v_\beta =
\bigwedge _{\alpha \in A} u_\alpha  \cdot  \bigwedge _{\beta \in B} v_\beta
\end{equation}
for all $u_\alpha \in V$ $(\alpha \in A)$ and
$v_{\beta} \in V$ $(\beta \in B)$.

In this case,  we say  that $V$ is an  \emph{involutive Heyting algebra}  or, better, an \emph{involutive op-quantale} (see \cite {rosenthal} about quantales). 

On an involutive  Heyting algebra $V$, we may define a $V$-distance. This fact relies on the classical notion of residuation. Let $v\in V$. Given $\beta \in V$, the sets
 $\{r \in V: v \leq r \oplus \beta\}$ and
$\{r \in V: v \leq  \beta \oplus r \}$ have least elements, that we
denote
respectively  by $\lceil v\oplus -\beta\rceil$ and $\lceil -\beta\oplus  v  \rceil$ and call the \emph{right} and \emph{left  quotient} of $v$ by $\beta$ (note that  $\overline {\lceil -\beta\oplus v \rceil} =
\lceil \bar v\oplus {-{\bar\beta}} \rceil$). It follows that for all
$p, q \in V$, the set
\begin{equation}\label{distance}
D(p,q):=\{r \in V : p\leq q \oplus \bar r\;\;{\rm and}\; \; q\leq
p\oplus r\} 
\end{equation} 
has a least
element.
This last element is $\lceil \bar p\oplus -\bar q \rceil \vee
\lceil -p\oplus q \rceil$, we  denote it by $d_V(p,q)$. 

As shown in \cite{jawhari-al}, the
map $(p,q) \longrightarrow d_{V}(p,q)$
is a $ V-$distance.

Let $\left( (E_{i}, d_{i})\right) _{i\in I}$ be a family of $V$-metric spaces. The \emph{direct product}  $\underset{i\in I}{%
\prod }\left( E_{i}, d_{i}\right) $, is the metric space $(E,d) $ where $E$ is the cartesian product  $%
\underset{i\in I}{\prod }E_{i}$ and $d$ is the  ''sup'' (or
$\ell ^{\infty }$) distance  defined
by $d\left(
\left( x_{i}\right) _{i\in I},\left( y_{i}\right) _{i\in I}\right) =%
\underset{i\in I}{\bigvee }d_{i}(x_{i},$ $y_{i})$.
We recall the following result of \cite{jawhari-al}. 

\begin{theorem}\label{thm:hyper1} $(V,d_{V})$ is a hyperconvex  $V$-metric space
and every $V$-metric space embeds isometrically into a
power of $(V,d_{V})$. 
\end{theorem}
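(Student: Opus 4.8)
\textbf{Proof strategy for Theorem \ref{thm:hyper1}.}

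The plan is to establish the two assertions separately: first that $(V, d_V)$ is hyperconvex, then that every $V$-metric space embeds isometrically into a power of it. For hyperconvexity, I would verify the two characterizing conditions identified just before the theorem, namely the $2$-Helly property for balls and the convexity property. The convexity property is essentially built into the definition of $d_V$: given balls $B(p, r)$ and $B(q, s)$ in $(V, d_V)$, the element $p' := q \oplus \bar s \wedge p$ (or a suitable residuated expression) should be forced into the intersection exactly when $d_V(p,q) \leq r \oplus \bar s$, using that $d_V(p,q)$ is the least element of the set $D(p,q)$ from \eqref{distance}. The real work is the $2$-Helly property: given a pairwise-intersecting family of balls $B(p_i, r_i)$, I want to produce a common point. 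The natural candidate is $z := \bigwedge_i (p_i \oplus \bar r_i)$ together with a dual expression, and checking $z \in B(p_i, r_i)$ for each $i$ — i.e. $d_V(z, p_i) \leq r_i$ — is where the infinite distributivity condition \eqref{distribinfty} enters crucially, since one must commute an infimum past a monoid multiplication. I expect this computation with residuals and the quantale distributivity to be the main obstacle, though it is the kind of argument already present in \cite{jawhari-al}, so I would cite that source and reproduce only the key identity.

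For the embedding assertion, I would fix a $V$-metric space $(E, d)$ and define, for each $a \in E$, the map $\varphi_a : E \to V$ by $\varphi_a(x) := d(a, x)$, and then the product map $\varphi := (\varphi_a)_{a \in E} : E \to V^E$. The claim is that $\varphi$ is an isometry from $(E,d)$ into the power $\prod_{a \in E}(V, d_V)$ equipped with the sup-distance. Unwinding the definitions, I must show $\bigvee_{a \in E} d_V(d(a,x), d(a,y)) = d(x,y)$ for all $x, y \in E$. The inequality $\geq$ follows by taking $a = x$: then $d_V(d(x,x), d(x,y)) = d_V(0, d(x,y))$, which one checks equals $d(x,y)$ using $d_V(0, q) = \lceil \bar 0 \oplus -\bar q\rceil \vee \lceil -0 \oplus q\rceil = \bar q \vee q$, and since for our purposes $d(x,y)$ is self-dual or at least $q \leq \bar q \vee q$ gives the bound — here I should be careful and instead note $d(x,x) = 0$ is the neutral element so $d_V(0, q) = q$ directly by minimality in $D(0,q)$. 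The inequality $\leq$ is the triangle inequality: for each $a$, $d_V(d(a,x), d(a,y)) \leq d(x,y)$ amounts to showing $d(x,y) \in D(d(a,x), d(a,y))$, i.e. $d(a,x) \leq d(a,y) \oplus \overline{d(x,y)}$ and $d(a,y) \leq d(a,x) \oplus d(x,y)$, both of which are immediate from property $(ii)$ of Lemma \ref{lem:distance} together with $(iii)$ (symmetry under the involution).

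I expect the embedding half to be routine once the formula $d_V(0,q) = q$ and the triangle-inequality reformulation are in place; the genuine content and the main obstacle is the hyperconvexity of $(V, d_V)$, specifically pushing the infimum through the quantale multiplication in the $2$-Helly verification, which is exactly the point where hypothesis \eqref{distribinfty} is indispensable. Since this is proved in \cite{jawhari-al}, my write-up would state the candidate common point explicitly, record the one distributivity identity that makes it work, and refer the reader there for the remaining bookkeeping.
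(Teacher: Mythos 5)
Your proposal is correct and takes essentially the same route as the paper, which for this theorem only records the key identity \eqref{metricsup} — precisely the sup-formula $d(x,y)=\bigvee_{a}d_{V}(d(a,x),d(a,y))$ that you verify via $d_V(0,q)=q$ and the triangle inequality — and defers the hyperconvexity of $(V,d_V)$ to \cite{jawhari-al}, as you do. One small correction to your sketch of the $2$-Helly step: with the paper's conventions, $z\in B(p_i,r_i)$ unpacks to $z\leq p_i\oplus r_i$ and $p_i\leq z\oplus \bar r_i$, so the candidate common point should be $z:=\bigwedge_i (p_i\oplus r_i)$ rather than $\bigwedge_i(p_i\oplus \bar r_i)$; the remaining inequalities $p_j\leq z\oplus\bar r_j$ then follow from \eqref{distribinfty} and the pairwise hypothesis exactly as you describe.
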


This result is due to the fact that for
every $V$-metric space $(E,d)$ and for all $x,y\in E$ the
following equality holds:
\begin{equation}\label{metricsup}
d\left( x,y\right)  = \underset{
z\in E}{\bigvee } d_{V} (d(z,x),d(z,y)).
\end{equation}

 A generalized metric space is an \textit{absolute retract} if it is a
retract of every isometric extension. The space $E$ is  \textit{%
injective} if for all $V$-metric space $E^{\prime }$ and $E'', $ each
non-expansive map $f:E^{\prime }\longrightarrow E$ and every isometry $g:E^{\prime
}\longrightarrow E''$ there is a non-expansive map $h:E''\longrightarrow E$ such that $%
h\circ g=f$.  

With this result follows the characterization given in \cite{jawhari-al}.  

\begin{theorem}\label{thm:hyper2}For  metric spaces  over an involutive  Heyting algebra $V$,
the notions of absolute retract, injective, hyperconvex and retract of a
power of $(V,d_{V})$ coincide.
\end{theorem}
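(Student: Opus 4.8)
The plan is to deduce Theorem \ref{thm:hyper2} by knitting together the chain of implications among the four notions, using Theorem \ref{thm:hyper1} as the workhorse. Concretely, I would prove the cycle: \emph{hyperconvex} $\Rightarrow$ \emph{injective} $\Rightarrow$ \emph{absolute retract} $\Rightarrow$ \emph{retract of a power of $(V,d_V)$} $\Rightarrow$ \emph{hyperconvex}. The last implication is essentially free: a power $(V,d_V)^I$ is hyperconvex because $(V,d_V)$ is (Theorem \ref{thm:hyper1}) and hyperconvexity is preserved under arbitrary $\ell^\infty$-products (the intersection-of-balls condition is checked coordinatewise, and the compatibility condition $d(x_i,x_j)\le r_i\oplus\overline r_j$ on the product reduces to the coordinatewise conditions since $d$ is the join of the $d_k$), and it is preserved under retracts: if $g:E'\to E$ is a retraction and $(B(a_i,r_i))_{i\in I}$ are balls in $E$ meeting the compatibility condition, view them inside $E'$, intersect there to get a point $p$, and check $g(p)$ lies in every $B(a_i,r_i)$ using that $g$ is non-expansive and fixes the $a_i\in E$.

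Next I would handle \emph{hyperconvex} $\Rightarrow$ \emph{injective}. This is the classical Aronszajn--Panitchpakdi-style argument adapted to $V$-metric spaces: given an isometry $g:E'\hookrightarrow E''$ and a non-expansive $f:E'\to E$ with $E$ hyperconvex, one extends $f$ one point at a time to $E''$, using Zorn's lemma on the poset of partial non-expansive extensions. At the successor step, to extend over a new point $z\in E''$, one must find a point $w\in E$ lying in $\bigcap_{x} B(f(x), d''(z, g(x)))$ over all $x$ already in the domain; the balls in question pairwise satisfy the compatibility inequality because $f$ is non-expansive and $d''$ is a $V$-distance (triangle inequality), so hyperconvexity of $E$ supplies $w$. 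Then \emph{injective} $\Rightarrow$ \emph{absolute retract} is immediate (take $f=\mathrm{id}_E$ and $g$ the given isometric extension), and \emph{absolute retract} $\Rightarrow$ \emph{retract of a power of $(V,d_V)$} follows from Theorem \ref{thm:hyper1}: embed $E$ isometrically into some $(V,d_V)^I$, then being an absolute retract means $E$ retracts off this extension.

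The main obstacle I anticipate is the successor step in the injectivity argument — verifying that the relevant family of balls in $E$ genuinely satisfies the compatibility hypothesis $d(f(x),f(y))\le d''(z,g(x))\oplus \overline{d''(z,g(y))}$ required to invoke hyperconvexity. This needs the full force of the $V$-distance axioms (items $(i)$--$(iii)$ of Lemma \ref{lem:distance}), in particular the involution interacting correctly with the triangle inequality $d''(g(x),g(y))\le d''(g(x),z)\oplus d''(z,g(y))$ together with $\overline{d''(g(x),z)}=d''(z,g(x))$ and $d''(g(x),g(y))=d'(x,y)\ge d(f(x),f(y))$; one has to be careful about the order of arguments in the involutive monoid since $\oplus$ is non-commutative. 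Everything else (coordinatewise checks in products, retraction-fixes-centers arguments) is routine, and since these are all the standard equivalences from \cite{jawhari-al}, I would in fact simply cite that source, presenting the proof sketch above only as the reason the equivalence holds in this generality.
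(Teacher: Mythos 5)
Your proposal is correct and matches the intended argument: the paper itself gives no proof of Theorem \ref{thm:hyper2}, merely deducing it from Theorem \ref{thm:hyper1} and citing \cite{jawhari-al}, where exactly your cycle (hyperconvex $\Rightarrow$ injective via the Aronszajn--Panitchpakdi one-point extension, injective $\Rightarrow$ absolute retract, absolute retract $\Rightarrow$ retract of a power via the isometric embedding of Theorem \ref{thm:hyper1}, and closure of hyperconvexity under products and retracts) is carried out. The only nit is that in the extension step the radius of the ball centered at $f(x)$ should be $d''(g(x),z)$ rather than $d''(z,g(x))$ so that the compatibility condition reads $d(f(x),f(y))\le d''(g(x),z)\oplus d''(z,g(y))$, which is precisely the triangle inequality --- a point you already flag yourself.
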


Note that if $v$ is accessible in $V$  and $V$ is an involutive  Heyting algebra, then $v$ is accessible in the initial segment $\downarrow v$ of $V$ (indeed, if $v\leq r\oplus s$ then since by distributivity $(r\wedge v)\oplus (\overline r\wedge v)= (r \oplus \overline r)\wedge (r \oplus v)\wedge (v \oplus \overline r)\wedge (v \oplus v)$, we have $v\leq (r\wedge v)\oplus (\overline r\wedge v)$. 

\subsection{One-local retracts and hole-preserving maps.} 

Let $E$ and $E'$ be two $V$-metric spaces. If $f$ is a non-expansive map from $E$ into  $E'$, and $h$ is a map from  $E$ into $V$, the \emph{image} of $h$ is the map $h_f$ from $E'$ into $V$ defined by $h_f(y): \bigwedge \{h(x):= f(x)=y\}$ (in particular $h_f(y)= 1$ where $1$ is the largest element of $V$ for every $y$ not in the range of $f$.  A \emph{hole} of $E$ is any map $h:E \rightarrow V$ such that  the intersection of balls $B(x, h(x))$ of $E$ ($x\in E$) is empty. If $h$ is a hole of $E$, the map $f$ \emph{preserves} $h$ provided that $h_f$ is a hole of $E'$. The map $f$ is \emph{hole-preserving} if the image of every hole is a hole.   

Let $\mathcal B:= (B(x_i, r_i))_{i\in I}$ be a family of balls of $E$. For every $x\in E$,  set $V_{\mathcal B}(x)=\{r\in V: B(x_i,r_i)\subseteq B(x,r) \;   \text{for some}\; i\in I\}$ and  $h_{\mathcal B}(x):= \bigwedge V_{\mathcal B}(x)$.   We have:

\begin{equation}\label{eq:hole}
\bigcap \mathcal B= \bigcap_{x\in E} B(x, h_{\mathcal B}(x)). 
\end{equation}

\begin{proof} Let $z\in \bigcap\mathcal B$ and $x\in E$. We claim that $z\in B(x, h_{\mathcal B}(x))$. This amounts to $d(x,z)\leq h_{\mathcal B}(x)$; due to the definition of $h_{\mathcal B}(x)$, this amounts to $d(x,z)\leq r$ for every $r\in V$ such that $B(x_i,r_i)\subseteq B(x,r)$  for some $i\in I$. Let $r$ and $i\in I$ such that this property holds. Since $z\in \bigcap\mathcal B$, $z\in B(x_i,r_i)$ and since $B(x_i,r_i)\subseteq B(x,r)$, $z\in B(x,r)$,  that is $d(x,z)\leq r$,  as required.  For the converse,  let  $z\in \bigcap_{x\in E} B(x, h_{\mathcal B}(x))$ and $i\in I$. By definition of $h_{\mathcal B}$,  $h_{\mathcal B}(x_i)\leq r_i$; since $z\in B(x_i, h(x_i))$, $z\in B(x_i,r_i)$; since this property holds for every $i\in I$, $z\in \bigcap \mathcal B$. 

\end{proof}

A hole $h$ of $E$ is \emph{finite} if $\bigcap_{x\in F} B(x, h(x))=\emptyset $ for some finite subset $F$ of $E$, otherwise it is infinite. 

A poset is \emph{well-founded} if every nonempty subset contains some minimal element. We recall that if a lattice is well-founded, every element $x$ which is the infimum of some subset $X$ is the infimum de some finite subset. In general,  the order on the Heyting algebra  $V$ is not well-founded,  still there are interesting examples (see Subsection \ref{subsection:orderedset} and \ref{subsection:orientedgraphs}). 

The following lemma  relates holes and compactness of the collection of balls (it contains a correction of Proposition II-4.9. of \cite{jawhari-al}):
\begin{lemma}\label{lem:hole}
If a   generalized space $E$ has a compact structure then every  hole is finite; the converse holds if $V$ is well-founded. 
\end{lemma}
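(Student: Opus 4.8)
The plan is to establish both directions via the equivalence between holes and empty intersections of balls encoded by equation (\ref{eq:hole}). For the forward direction, suppose $E$ has a compact structure and let $h$ be a hole, so that $\bigcap_{x\in E} B(x, h(x)) = \emptyset$ by definition. By (\ref{eq:hole}) applied to the family $\mathcal{B}$ consisting of all balls $B(x,h(x))$, $x\in E$ — or more simply by taking $\mathcal{B}$ to be this family directly — we have a family of balls with empty intersection. Since the collection of balls of $E$ has the finite intersection property (compact structure), the failure of $\bigcap_{x\in E} B(x,h(x))$ to be nonempty forces some finite subfamily to have empty intersection already; that is, there is a finite $F\subseteq E$ with $\bigcap_{x\in F} B(x,h(x)) = \emptyset$, so $h$ is finite.

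For the converse, assume $V$ is well-founded and that every hole of $E$ is finite; I must show the collection of balls has the finite intersection property. Let $\mathcal{B} := (B(x_i, r_i))_{i\in I}$ be a family of balls all of whose finite subfamilies have nonempty intersection, and suppose for contradiction that $\bigcap \mathcal{B} = \emptyset$. Form $h_{\mathcal{B}}$ as in the excerpt; by (\ref{eq:hole}), $\bigcap_{x\in E} B(x, h_{\mathcal{B}}(x)) = \bigcap \mathcal{B} = \emptyset$, so $h_{\mathcal{B}}$ is a hole of $E$. By hypothesis it is finite, so there is a finite $F\subseteq E$ with $\bigcap_{x\in F} B(x, h_{\mathcal{B}}(x)) = \emptyset$. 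Now I invoke well-foundedness of $V$: for each $x\in F$, $h_{\mathcal{B}}(x) = \bigwedge V_{\mathcal{B}}(x)$ is the infimum of a subset of the well-founded lattice $V$, hence equals the infimum of a finite subset of $V_{\mathcal{B}}(x)$; thus there is a finite set $I_x\subseteq I$ and, for each $i\in I_x$, we have $B(x_i,r_i)\subseteq B(x, r)$ for the corresponding $r$, with $h_{\mathcal{B}}(x) = \bigwedge\{ r : i\in I_x\}$ realized over these finitely many indices. Consequently $B(x, h_{\mathcal{B}}(x)) \supseteq \bigcap_{i\in I_x} B(x_i, r_i)$ — here one uses that $d(x,\cdot)\le \bigwedge_j r_{(j)}$ iff $d(x,\cdot)\le r_{(j)}$ for each $j$, i.e. a point in all the $B(x_i,r_i)$ with $i\in I_x$ lies within each bounding radius and hence within their meet. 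Taking the union $J := \bigcup_{x\in F} I_x$, which is finite, we get $\bigcap_{i\in J} B(x_i, r_i) \subseteq \bigcap_{x\in F} B(x, h_{\mathcal{B}}(x)) = \emptyset$, contradicting the finite intersection hypothesis on $\mathcal{B}$. Hence $\bigcap\mathcal{B}\ne\emptyset$ and $E$ has a compact structure.

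The main obstacle I anticipate is the careful handling of the well-foundedness step in the converse: one needs that an infimum over an arbitrary subset of $V$ is attained as an infimum of a finite subset (which the excerpt explicitly records for well-founded lattices), and then one must track that this finite reduction of $h_{\mathcal{B}}(x)$ can be expressed in terms of finitely many of the original balls $B(x_i,r_i)$, using the defining property of $V_{\mathcal{B}}(x)$ that each of its members $r$ comes with a witnessing index $i$ such that $B(x_i,r_i)\subseteq B(x,r)$. Assembling the finitely many witnesses over the finite set $F$ then yields the desired finite subfamily of $\mathcal{B}$ with empty intersection. The forward direction is essentially immediate from the definitions once (\ref{eq:hole}) is in hand.
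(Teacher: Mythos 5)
Your proof is correct and follows essentially the same route as the paper: the forward direction is the contrapositive of the finite intersection property applied to the family of balls $B(x,h(x))$, and the converse uses $h_{\mathcal B}$ together with equation (\ref{eq:hole}), well-foundedness of $V$ to reduce each $h_{\mathcal B}(x)$ to a finite meet, and the witnessing indices to extract a finite subfamily of $\mathcal B$ with empty intersection. You even make explicit the inclusion $\bigcap_{i\in I_x} B(x_i,r_i)\subseteq B(x,h_{\mathcal B}(x))$ that the paper leaves implicit.
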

\begin{proof} Let $ h$ be  a hole. Then, by definition, $\bigcap_{x\in E} B(x, h(x))=\emptyset$. Since $E$ has a compact structure, $\bigcap_{x\in F} B(x, h(x))=\emptyset$ for some finite subset, hence $h$ is finite. Conversely, let  $\mathcal B:= (B(x_i, r_i))_{i\in I}$ be a family of balls of $E$ such that $\bigcap \mathcal B=\emptyset$. There are two ways of associating a finite hole to $\mathcal B$. We may define  $h:\rightarrow V$ be setting $h(x):= \bigwedge \{r_i: x_i=x\}$. We may also associate  $h_{\mathcal B}$. By Formula (\ref{eq:hole}), this   is a hole. These hole are  finite. We conclude by using $h_{\mathcal B}$. Let $F$ be some finite subset of $E$ such that $\bigcap_{x\in F} B(x, h_{\mathcal B}(x))=\emptyset$. Since $V$ is well-founded, for each $x\in E$, there is some finite subset $V_x$ of $V_{\mathcal B}(x)=\{r\in V: B(x_i,r_i)\subseteq B(x,r)  \text{for some}\; i\in I\}$ such that $\bigwedge V_{\mathcal B}(x)=\bigwedge V_x$. For  each  $x\in F$, there is a finite subset $I_x$ such that for each $r\in V_x$ there is some $i\in I_x$  such that $B(x_{i},r_{i})\subseteq B(x,r)$. Then $\bigcap_{i\in \bigcup_{x\in F}  I_x}B(x_i, r_i)=\emptyset$ proving that  the intersection of finitely many many members of $\mathcal B$ is nonempty. 
\end{proof}

\begin{lemma} \label{lem:hole-localretract}A non-expansive map $f$ from a $V$-metric space $E$ into a $V$-metric space  $E'$ is hole-preserving iff  $f$ is an isometry of $E$ onto its image and this image is a $1$-local retract of $E'$.
\end{lemma}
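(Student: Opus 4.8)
The plan is to prove both implications of Lemma \ref{lem:hole-localretract} separately, using Formula (\ref{eq:hole}) as the bridge between holes and intersections of balls, and Lemma \ref{lem:one-local-retract} as the bridge between one-local retracts and the ``ball-intersection'' property.

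\textbf{The ``only if'' direction.} Assume $f\colon E\to E'$ is hole-preserving. First I would check that $f$ is injective and an isometry onto its image. For injectivity, suppose $f(x)=f(y)$ with $x\neq y$. Take a value $v\in V$ with $v\not\leq 0$ if $d(x,y)\not\leq 0$ (which holds since $x\neq y$), and cook up a hole of $E$ whose image fails to be a hole of $E'$: the natural candidate is built from the balls $B(x,r)$ and $B(y,r)$ for suitable $r$, or more simply one observes that collapsing two points can fill a hole. Concretely, consider a hole $h$ of $E$ that separates $x$ and $y$ (e.g.\ $h$ equal to something small at $x$ and at $y$ but such that $\bigcap_z B(z,h(z))=\emptyset$ only because of the tension between the $x$-ball and the $y$-ball); after applying $f$ the two balls merge at $f(x)=f(y)$ and $h_f$ ceases to be a hole. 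For the isometry-onto-image claim, one uses that $f$ being non-expansive already gives $d'(f(x),f(y))\leq d(x,y)$; the reverse inequality follows by the same kind of hole argument — if $d'(f(x),f(y))=v'<v=d(x,y)$, build a hole of $E$ using a ball of radius just below $v$ centered at $x$ together with the other balls needed, whose image is not a hole in $E'$ because the $f(x)$-ball of the same radius now catches $f(y)$. Then I would show the image $A:=f(E)$ is a one-local retract of $E'$. By Lemma \ref{lem:one-local-retract} it suffices to show: for every family $B(f(x_i),r_i)$ with $f(x_i)\in A$, $r_i\in V$, if the intersection over $E'$ is nonempty then the intersection over $A$ is nonempty. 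Suppose for contradiction the intersection over $A$ is empty; transport the balls back to $E$ via $f$ (using that $f$ is an isometry onto $A$) to get a family of balls of $E$ whose intersection is empty — this packages into a hole $h$ of $E$ by Formula (\ref{eq:hole}) or by the direct recipe $h(x):=\bigwedge\{r_i:x_i=x\}$. Since $f$ is hole-preserving, $h_f$ is a hole of $E'$, i.e.\ $\bigcap_{y\in E'}B(y,h_f(y))=\emptyset$; but the point in the nonempty intersection over $E'$ would have to lie in every $B(f(x_i),r_i)$, and one checks it lies in every $B(y,h_f(y))$, contradiction. Here the bookkeeping between $h_f$ and the original balls is where I expect to have to be careful.

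\textbf{The ``if'' direction.} Assume $f$ is an isometry of $E$ onto its image $A$ and $A$ is a one-local retract of $E'$. Let $h$ be a hole of $E$; I must show $h_f$ is a hole of $E'$, i.e.\ $\bigcap_{y\in E'}B(y,h_f(y))=\emptyset$. Suppose not, and let $z'\in E'$ lie in that intersection. Since $h$ is a hole, $\bigcap_{x\in E}B(x,h(x))=\emptyset$, and transporting along the isometry $f$, the balls $B(f(x),h(x))$ in $E'$ (for $x\in E$) have empty intersection \emph{within} $A$. I would then use that $A$ is a one-local retract: by Lemma \ref{lem:one-local-retract} (the forward direction), if the family of balls $B(f(x),h(x))$, which have centers in $A$, had a point in $E'$, they would have a point in $A$ — contradicting the emptiness within $A$. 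So I need to produce a point of $E'$ in $\bigcap_{x\in E}B(f(x),h(x))$; this is exactly where $z'$ comes in. From $z'\in B(y,h_f(y))$ for all $y\in E'$, specialize to $y=f(x)$: then $d'(z',f(x))\leq h_f(f(x))\leq h(x)$ by definition of $h_f$ as an infimum over the fiber (and $x$ is in the fiber of $f(x)$ since $f$ is injective). Hence $z'\in B(f(x),h(x))$ for every $x\in E$, giving the desired point, and the contradiction closes the argument.

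\textbf{Main obstacle.} The routine but delicate part is the injectivity/isometry portion of the ``only if'' direction: one must manufacture, from a failure of injectivity or of distance-preservation, an \emph{explicit} hole of $E$ whose $f$-image is not a hole, and this requires choosing the radii of the offending balls with care so that the intersection is empty in $E$ (making it a genuine hole) yet becomes nonempty in $E'$ after the collapse. The translation lemmas (Lemma \ref{lem:one-local-retract}, Formula (\ref{eq:hole}), and the equivalence between ``intersection of balls is empty'' and ``is a hole'') do all the structural work; the rest is careful unwinding of the definition of $h_f$ as a fiberwise infimum.
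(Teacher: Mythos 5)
Your overall architecture is the same as the paper's: both directions go through the ball-intersection characterization of one-local retracts (Lemma \ref{lem:one-local-retract}) and the fiberwise-infimum definition of $h_f$. Your ``if'' direction is correct and essentially the paper's argument (the paper invokes the retraction directly where you invoke the ball property, which is the same thing), and your ``only if'' argument for the retract part also works: pulling the balls back through the isometry, packaging them as the hole $h(x):=\bigwedge\{r_i: x_i=x\}$, and deriving a contradiction is a legitimate variant of the paper's more direct choice $h(z):=d'(f(z),a')$, which avoids the contradiction and the bookkeeping you worry about.

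The genuine gap is the isometry step of the ``only if'' direction, precisely the step you flag as the main obstacle. The construction you propose --- ``a ball of radius just below $v$ centered at $x$'' --- does not exist in a general involutive ordered monoid $V$ (there need be no element immediately below $v$, and even in $\mathbb{R}^+$ this is not a single radius), so as written this step would fail. The correct choice, which the paper makes, is: for $a,b\in E$ with $r:=d(a,b)$ and $r':=d'(f(a),f(b))$, set $h(a):=r'$, $h(b):=0$ (the least element of $V$) and $h(z):=1$ elsewhere. The radius-$0$ ball pins $\bigcap_z B(z,h(z))$ inside $\{b\}$, so this intersection is empty exactly when $d(a,b)\not\leq r'$; meanwhile $f(b)$ lies in every ball $B(y',h_f(y'))$, so $h_f$ is never a hole, and hole-preservation (in contrapositive) forces $h$ not to be a hole, i.e.\ $b\in B(a,r')$, i.e.\ $r\leq r'$; with non-expansiveness this gives $r=r'$. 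Note also that your separate injectivity argument is unnecessary: once $d'(f(a),f(b))=d(a,b)$, the axiom $d(a,b)\leq 0\Leftrightarrow a=b$ gives injectivity for free. With this one construction supplied, your proposal closes.
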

\begin{proof}Let $d$ and $d'$ be the distances of $E$ and $E'$. 
Suppose that $f$ is hole-preserving. We prove first that $f$ is an isometry. Let $a,b\in E$, $a':=f(a)$, $b':=f(b)$, $r:=d(a,b)$ and $r':= d'(a',b')$. Our aim is to prove that $r'=r$. Let $h: E\rightarrow V$ defined by setting $h(z):= 1$ (where $1$ is the largest element of  $V$) if $z\not\in \{a,b\}$, $h(z)=r'$ if $z= a$ and $h(z):= 0$ (where $0$ is the least element of $V$) if $z=b$.  The intersection of balls $B(z', h_f(z'))$ of $E'$ contains $b'$, hence the map $h_f$ is not a hole of $E'$. Since $f$ is hole-preserving, $h$ is not a hole of $E$.  The intersection of  balls $B(z, h(z))$ of $E$ being  included into $\{b\}$ it is equal to $\{b\}$. Hence, $b\in B(a, r')$. It follows that $r'=r$.   Next, we prove that the range $A'$
 of $f$ is a $1$-local retract of $E'$. We apply Lemma \ref{lem:one-local-retract}. Let $B(x_i, r_i )$, $i\in I$,  where  $x_i \in A'$, $r_i\in V$,  be a family of balls of $E'$ such that the intersection over $E'$ is nonempty. Let $a'$ in this intersection and let $h:E\rightarrow V$ defined by setting $h(z):= d'(f(z), a')$. The intersection  of balls $B(z', h_f(z'))$ of $E'$ contains $a'$ hence $h_f$ it is not a hole of $E'$. Since $f$ is hole-preserving, $h$ is not a hole. Hence,  there is some  $a$ in the intersection  of balls $B(z, h(z))$ of $E$. Let $b':= f(a)$. Then $b'$ belongs to $\bigcap_{i\in I} B(x_i, r_i )$. The conclusion that $A'$ is a one-local retract follows from Lemma \ref{lem:one-local-retract}.  The converse is immediate. Let $h$ be a hole of $E$ and $h_f$ be its image. If $h_f$ is not a hole of $E'$, the intersection  of balls $B(z', h_f(z'))$ of $E'$ contains some element $a'$. Since $A'$ is a one-local retract  of $E'$ there is a retraction fixing $A'$ and sending $a'$ onto some element $b \in A'$. Let $a\in E$ such that $f(a)=b$.  Then $a\in \bigcap{z\in E}B(z, h(z))$.  Indeed, let $z\in E$; we claim that $d(z,a)\leq h(z)$ (indeed, since $f$ is an isometry, $d(z,a)= d'(f(z),b)$ and,  via the retraction,  $d'(f(z), b)\leq d'(f(z), a)\leq h_f(f(z)\leq h(z)$).   Hence $h$  is not a hole of $E$. Contradiction.  \end{proof}

Replacing isometries by hole-preserving maps in the definition of absolute retracts and injectives, we have the notions of absolute retracts and injectives w.r.t. holes preserving maps. 
 
We recall the following result of \cite{jawhari-al}.

\begin{theorem} On an  involutive Heyting algebra $V$,  the absolute retracts and the injective w.r.t. hole-preserving maps coincide. The class $\mathcal H$ of these objects is closed under product and retraction. Moreover, every metric space embeds into some member of $\mathcal H$ by some hole-preserving map. \end{theorem}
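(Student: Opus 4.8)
The final statement packages three assertions about the class $\mathcal H$ of absolute retracts with respect to hole-preserving maps over an involutive Heyting algebra $V$: (1) absolute retracts and injectives w.r.t. hole-preserving maps coincide; (2) $\mathcal H$ is closed under products and retracts; (3) every $V$-metric space embeds into a member of $\mathcal H$ by a hole-preserving map. The strategy is to mirror, step by step, the classical treatment of absolute retracts / injectives / hyperconvexity (Theorem \ref{thm:hyper2}), but using hole-preserving maps in place of isometries throughout, and to lean on Lemma \ref{lem:hole-localretract}, which is the bridge identifying a hole-preserving map with ``isometry onto a one-local retract''.

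First I would prove (2). Closure under retracts is formal: if $E\in\mathcal H$ and $E$ is a retract of $E'$ by a retraction $g$ and coretraction $f$, and $E$ sits inside some $E''$ by a hole-preserving map, then the composite situation shows $E$ is again a retract there; one checks the composites of hole-preserving maps are hole-preserving (the image operation $h\mapsto h_f$ is functorial in the obvious sense, so $(h_f)_{f'} = h_{f'\circ f}$ up to the convention that values outside the range are $1$). For closure under products: given $E_i\in\mathcal H$ and a hole-preserving embedding of $\prod_i E_i$ into some $E''$, one uses that the coordinate projections $\pi_i\colon\prod_i E_j\to E_i$ are non-expansive, that each $E_i$ is an absolute retract, and assembles coordinatewise retractions into a retraction of $E''$ onto $\prod_i E_i$. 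The point where care is needed is that a non-expansive (indeed hole-preserving) map into a product is hole-preserving iff each coordinate is, which follows from Formula (\ref{eq:hole}) and the fact that the sup-distance on a product makes a ball in the product an intersection of ``cylinders'' over coordinate balls.

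Next, assertion (1), the coincidence of absolute retracts and injectives. The implication injective $\Rightarrow$ absolute retract is immediate (apply the injectivity diagram to a hole-preserving embedding, taking $f$ to be the identity). For the converse I would argue exactly as in the isometric case: given $E\in\mathcal H$ and a diagram with a hole-preserving map $f\colon E'\to E$ and a hole-preserving embedding $g\colon E'\to E''$, first embed $E''$ (hence $E$ too, via $f$ composed appropriately) into a common member $W$ of $\mathcal H$ using the last clause — this is where (3) is used — then push-out inside $W$ and use that $E$, being an absolute retract, is a retract of $W$; composing the retraction with the inclusions yields the desired non-expansive (hole-preserving) extension $h\colon E''\to E$. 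So (1) genuinely depends on (3), which should therefore be established first or in parallel.

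The main obstacle is assertion (3): \emph{every $V$-metric space $E$ embeds into a member of $\mathcal H$ by a hole-preserving map}. By Theorem \ref{thm:hyper1}, $E$ embeds isometrically into a power $(V,d_V)^X$ of the hyperconvex space $(V,d_V)$; a power of a hyperconvex space is hyperconvex (Theorem \ref{thm:hyper2} together with closure of hyperconvexity under products, which over a Heyting algebra follows from the residuation formula (\ref{distance})), so it suffices to know that hyperconvex spaces lie in $\mathcal H$ and that the isometric embedding of $E$ into its ``hyperconvex hull'' can be taken hole-preserving. That a hyperconvex space is an absolute retract w.r.t. hole-preserving maps is where Lemma \ref{lem:hole-localretract} does the work: given a hole-preserving embedding of a hyperconvex $H$ into any $E''$, the image is a one-local retract of $E''$, and one must upgrade ``one-local retract'' to ``retract'' — this is precisely a one-step-at-a-time extension argument, extending the retraction over $H\cup\{x\}$ one new point $x$ at a time and then invoking a Zorn/transfinite argument to get a global retraction, the existence of each local step being guaranteed by hyperconvexity (the relevant family of balls pairwise intersects, by non-expansiveness, hence has a common point). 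Making the transfinite gluing go through — ensuring consistency of the partial retractions at limit stages — is the technical heart; it is the hole-preserving analogue of the standard fact that injective metric spaces are absolute retracts, and the only genuinely new ingredient is checking that the maps produced are hole-preserving, which again reduces via Formula (\ref{eq:hole}) to the one-local-retract property through Lemma \ref{lem:hole-localretract}.
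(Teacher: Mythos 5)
Your reduction of assertion (3) to Theorem \ref{thm:hyper1} does not work, and this sinks the whole plan, since you derive (1) and the closure properties from (3). You propose to embed $E$ isometrically into a power of $(V,d_V)$ --- a hyperconvex space --- and to ``take the embedding hole-preserving''. But if $E$ is not itself hyperconvex, \emph{no} isometric embedding $u$ of $E$ into \emph{any} hyperconvex space $W$ is hole-preserving: by the definition of hyperconvexity there is a family of balls $B(x,h(x))$, $x\in E$, with $d(x,y)\leq h(x)\oplus\overline{h(y)}$ for all $x,y$ and empty intersection in $E$ (a hole of $E$ which is a weak metric form); since $u$ is an isometry the transported family is still pairwise consistent in $W$, so hyperconvexity of $W$ gives it a common point, i.e. $W$ fills the hole and $h_u$ is not a hole of $W$. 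The class $\mathcal H$ is strictly larger than the class of hyperconvex spaces precisely because its members are allowed to leave consistent holes unfilled, so no detour through hyperconvex envelopes or powers of $(V,d_V)$ can produce the required hole-preserving embedding.

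What is actually needed (and what the paper, following \cite{jawhari-al}, recalls as the key ingredient) is the replete space $H(E)$: the set of metric forms $h$ on $E$ (maps $h:E\to V$ with $d(x,y)\leq h(x)\oplus\overline{h(y)}$ and $h(x)\leq d(x,y)\oplus h(y)$) such that $\bigcap_{x\in E}B(x,h(x))\neq\emptyset$, equipped with the sup-distance inherited from $(V,d_V)^E$. The key lemma is that $\overline\delta:E\to H(E)$ defined by $\overline\delta(x)(y):=d(y,x)$ is hole-preserving and that $H(E)$ is an absolute retract w.r.t. hole-preserving maps; the restriction to forms whose associated ball family is already realized inside $E$ is exactly what prevents $H(E)$ from filling holes of $E$. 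This yields (3), and the transferability (amalgamation) of a hole-preserving map with a non-expansive map through $H(\cdot)$ --- which your ``push-out inside $W$'' step silently assumes --- then makes (1) and the closure properties go through. Your arguments for (1) and (2) are otherwise the standard formal ones, but without the replete space (or an equivalent construction) the theorem is not proved. Note finally that the paper itself only recalls this statement from \cite{jawhari-al} and sketches these ingredients rather than giving a complete proof.
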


The  proof relies on  the introduction of the  replete space 
$H(E)$ of a metric space $E$. The space  $E$ is a absolute retract or not depending wether  $E$ is a retract of $H(E)$ or not. Furthermore, it allows to prove the \emph{transferability} of holes preserving maps, that is the fact that  for every  non-expansive map $f: E \rightarrow F$, hole-preserving map $g: E\rightarrow G$  there are hole-preserving map  $g' F\rightarrow E'$ and non-expansive map $f': G\rightarrow  E'$ such that  $g'\circ f= f'\circ g$.   Indeed, one may choose $E'= H(E)$. 

We give the key ingredients. 
 
 Let $E$ be a $V$-metric space. A \emph{ weak metric form} on $E$ is any map $h: E \rightarrow V$ such that $d(x,y) \leq h(x)\oplus \overline h(y)$ for all $x,y\in E$. If in addition, $h(x)\leq d(x,y)\oplus h(y)$  for all $x,y\in  E$, this is a \emph{metric form}. We denote by $C(E)$, resp. $L(E)$, the set of weak metric form, resp. of metric  forms.

Let  $H(E)$ the subset of $L(E)$ consisting of metric forms $h$ such that the intersection of balls $B(x, h(x))$ for $x\in E$ is nonempty. If  $V$ is an involutive Heyting algebra, we may equip $H(E)$ of the distance  induced by the sup-distance on $(V, d_V)^E$. We call it the \emph{replete space}.  

We recall the following result of \cite{jawhari-al}.

\begin{lemma}If $V$ is an involutive Heyting algebra then  $\overline \delta: E\rightarrow H(E)$  defined by $\overline \delta (x)(y):= d(y,x)$ is a hole-preserving map  from $E$ into $H(E)$. Futhermore $H(E)$ is an absolute retract w.r.t. the hole-preserving maps (i.e. this is a retract of every extension by a hole-preserving map).
\end{lemma}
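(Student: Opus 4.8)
The plan is to split the statement into its two parts and, before either, to extract the one identity on which both rest: for every metric form $h\in L(E)$ and every $x\in E$,
\[
\bigvee_{z\in E} d_V\bigl(d(z,x),h(z)\bigr)=h(x).
\]
The inequality $\leq$ holds term by term, since the definition of a metric form says precisely that $d(z,x)\leq h(z)\oplus\overline{h(x)}$ and $h(z)\leq d(z,x)\oplus h(x)$, i.e.\ $h(x)\in D(d(z,x),h(z))$, so $d_V(d(z,x),h(z))\leq h(x)$; the reverse inequality comes from the term $z=x$, where $d(x,x)=0$ and $d_V(0,h(x))=h(x)$. Reading this with $h=\overline\delta(a)$ — which lies in $L(E)$ by the triangle inequality and the symmetry of $d$, and lies in $H(E)$ because $a$ belongs to every ball $B(y,d(y,a))$ — at the point $b$ gives $d_{H(E)}(\overline\delta(a),\overline\delta(b))=d(a,b)$; this particular instance is exactly Formula (\ref{metricsup}). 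So $\overline\delta$ is a well-defined isometry of $E$ onto its image in $H(E)$.

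I would then show $\overline\delta(E)$ is a one-local retract of $H(E)$, which with the previous paragraph and Lemma \ref{lem:hole-localretract} yields that $\overline\delta$ is hole-preserving. Given $h\in H(E)$, choose $a$ in $\bigcap_{y\in E}B(y,h(y))$ — nonempty precisely because $h\in H(E)$ — so that $d(y,a)\leq h(y)$ for all $y$. The self-map of $\overline\delta(E)\cup\{h\}$ fixing $\overline\delta(E)$ and sending $h$ to $\overline\delta(a)$ is non-expansive: for the ordered pair $(\overline\delta(b),h)$ this is $d_{H(E)}(\overline\delta(b),\overline\delta(a))=d(b,a)\leq h(b)=d_{H(E)}(\overline\delta(b),h)$, the outer equalities being the identity above; for the reversed pair one applies the order-preserving involution of $V$. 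Hence $\overline\delta(E)$ is a retract of $\overline\delta(E)\cup\{h\}$, and, by Lemma \ref{lem:one-local-retract} together with Lemma \ref{lem:metric-relation}, a one-local retract.

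For the second assertion, let $g\colon H(E)\to F$ be a hole-preserving map; the task is to produce a non-expansive retraction $\rho\colon F\to H(E)$ with $\rho\circ g=\mathrm{id}_{H(E)}$. Since the image of a hole under a composite of hole-preserving maps is again a hole, $f:=g\circ\overline\delta\colon E\to F$ is hole-preserving, in particular an isometry. Put $\rho(y)(x):=d_F(f(x),y)$. The verifications are then: $\rho(y)\in L(E)$, from the triangle inequality in $F$ and the isometry property of $f$; $\rho(y)\in H(E)$, because otherwise $\rho(y)$ is a hole of $E$, hence $(\rho(y))_f$ is a hole of $F$, yet $y$ lies in every ball $B_F(z,(\rho(y))_f(z))$ (the value being $d_F(z,y)$ on the range of $f$ and the top of $V$ off it) — a contradiction; $\rho$ is non-expansive, by Formula (\ref{metricsup}) in $F$, the supremum over $\{f(x):x\in E\}$ being dominated by that over all of $F$; and $\rho(g(h))(x)=d_F(f(x),g(h))=d_{H(E)}(\overline\delta(x),h)=h(x)$, using that $g$ is an isometry and then the key identity. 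So $\rho\circ g=\mathrm{id}_{H(E)}$, and $H(E)$ is a retract of the arbitrary hole-preserving extension $F$, i.e.\ an absolute retract with respect to hole-preserving maps.

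The crux is the identity $\bigvee_z d_V(d(z,x),h(z))=h(x)$ for metric forms; once it is in hand, the one-local retract property, the fact that $\rho$ lands in $H(E)$, and the splitting $\rho\circ g=\mathrm{id}$ all follow by routine manipulation, the only mildly delicate residual point being the indirect argument — through hole-preservation of $g\circ\overline\delta$ — that $\rho(y)$ belongs to $H(E)$ and not merely to $L(E)$.
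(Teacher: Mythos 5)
Your proof is correct. Note that the paper does not actually prove this lemma---it is recalled from \cite{jawhari-al}---so there is no internal proof to compare against line by line; what you supply is a complete, self-contained argument along the standard injective-envelope lines, and it checks out. Everything does rest on the identity $\bigvee_{z\in E} d_V\bigl(d(z,x),h(z)\bigr)=h(x)$ for metric forms $h$, and both halves of your justification are sound: the term-by-term bound comes from $h(x)\in D(d(z,x),h(z))$ (using both clauses in the definition of a metric form), and the reverse inequality from the term $z=x$ together with $d_V(0,q)=q$, which uses the standing hypothesis that the neutral element $0$ is the least element of $V$. The retraction of $\overline\delta(E)\cup\{h\}$ onto $\overline\delta(E)$, the contradiction argument placing $\rho(y)$ in $H(E)$ rather than merely in $L(E)$ via hole-preservation of $g\circ\overline\delta$, the non-expansiveness of $\rho$ from Formula (\ref{metricsup}) applied in $F$, and the computation $\rho(g(h))=h$ are all correct. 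Two cosmetic points. First, where you invoke ``the symmetry of $d$'' to see that $\overline\delta(a)$ is a metric form, you really mean the involution axiom $\overline{d(y,a)}=d(a,y)$; the distance is not symmetric in this setting. Second, the appeal to Lemmas \ref{lem:one-local-retract} and \ref{lem:metric-relation} at the end of your second paragraph is superfluous: exhibiting, for every $h$, a retraction of $\overline\delta(E)\cup\{h\}$ onto $\overline\delta(E)$ is already the definition of a one-local retract for metric spaces. Finally, you tacitly use that the involution of $V$ is order-preserving and that composites of hole-preserving maps are hole-preserving (via $(h_{f_1})_{f_2}=h_{f_2\circ f_1}$, where fibers outside the range contribute the top element and are harmless); both are true but each deserves a sentence in a final write-up.
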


\begin{problems} Let $E$ be a generalized metric space with  a compact and normal structure.
\begin{enumerate}[{(a)}]
\item  When  a one-local retract of $E$ is a retract? 

\item When  the set $Fix(f)$ of fixed point  of a non-expansive self map a retract? 

\end{enumerate}
\end{problems}

Note that if $(a)$ has a positive answer then spaces with a compact and normal structure are absolute retracts w.r.t.  hole-preserving maps. For these problems, it could be fruitful to consider the case of posets; there is a vast  literature on fix-point and this type of questions (see \cite{schroder, baclawski-bjorner, nevermann-wille}).

\subsection{The case of ordinary metric spaces} 
Let $\mathbb R^{+}$ be the set of non negative reals with the addition and natural order, the involution being the identity. Let $V:=\mathbb R^{+} \cup \{+\infty\}$. Extend to $V$ the addition and  order in a natural way. Then, metric spaces over $V$ are direct  sums of ordinary metric spaces (the distance between elements in different components being $+\infty$). The set $V$ is an involutive Heyting algebra, the distance $d_{V}$ once restricted to $\mathbb R^{+}$  is the absolute value. The inaccessible  elements are $0$ and $+\infty$ hence, if one  deals with ordinary metric spaces,  unbounded  spaces in the above sense are those which are unbounded in the ordinary sense.  If one deals with ordinary metric spaces, infinite products  can yield spaces  for which $+\infty$ is attained. On may replace powers with $\ell^{\infty}$-spaces (if $I$ is any set, $\ell^{\infty}_{\mathbb R} (I)$ is the set of families  $(x_i)_{i\in I}$ of reals numbers, endowed with the sup-distance). With that, the notions of  absolute retract, injective, hyperconvex and retract of some $\ell^{\infty}_{\mathbb R}(I)$ space coincide.

According to Corollary \ref{cor:compact+normal},  a  hyperconvex metric space   has a normal structure iff its diameter  is bounded. In fact, if a subset $A$ of a hyperconvex space is an intersection of balls, its radius is half the diameter. No description of metric spaces with a compact and normal structure seems to be known.

The existence of a fixed point for a non-expansive map on a bounded hyperconvex space is the famous result of Sine and Soardi. Theorem \ref{thm:cor} applied to a bounded  hyperconvex metric space is  Baillon's fixed point theorem.  Applied to 
a metric space with  a compact and normal structure,  this is the result obtained by the first author \cite{khamsi}.

\subsection{The case of ordered sets}\label{subsection:orderedset}
In this subsection, we consider  posets as binary relational systems as well as metric spaces over an involutive Heyting algebra.

Let $P:= (E, \leq)$ be an ordered set. 
Let $\mathcal E:= \{\leq, \leq^{-{1}}\}$ and $\mathbf E:= (E, \mathcal E)$. By definition, $\mathbf E$ is reflexive and involutive. For $x\in E$, set $\uparrow x:= \{y\in E: x\leq y\}$ and $\downarrow x:= \{y\in E: y \leq x\}$; this sets are called the \emph{principal} final, resp. \emph{initial},  segment  generated by $x$. With our terminology of balls of $\mathbf E$, these sets are the balls $B(x, \leq)$ and $B(x, \leq^{-1})$. 

Let $V$ be the following structure. The domain is the set  $\{0, +,-, 1\}$. The order is $0\leq a,b\leq 1$ with $+$ incomparable to $-$; the involution exchange $+$ and $-$ and fixes $0$ and $1$; the operation $\oplus$ is defined by $p\oplus q:= p\vee q$ for every $p,q \in V$. As it is easy to check, $V$ is an involutive Heyting algebra
  
If $(E, d)$ is a $V$-metric space, then $P_{d}:=(E, \delta_{a})$,   where $\delta_{+}:= \{(x,y): d(x,y)\leq +\}$,  is an ordered set. Conversely,  if $P:= (E, \leq )$ be an ordered set, then the map $d:E\times E\rightarrow V$ defined by $d(x,y):= 0$ if $x=y$,  $d(x,y):= +$ if $a<b$, $d(x,y):=-$ if $y<x$ and $d(x,y):= 1$ if $x$ and $y$ are incomparable. Clearly, if $(E,d)$ and $(E',d')$ are two $V$-metric spaces, a map $f:E\rightarrow E'$ is non-expansive from $(E,d)$ into $(E',d')$ iff it is order-preserving from $P_{d}$ into $P_{d'}$. 
Depending on the value of $v\in V$, a  $V$-metric space  has four types of balls: singletons, corresponding to  $v=0$, the full space, corresponding to   $v=1$, the principal  final segments,  $\uparrow x:= \{y\in E: x\leq y\}$, corresponding to balls  $B(x, +)$, and principal initial segments, $\downarrow  x:= \{y\in E : y\leq x\}$, corresponding to  balls   $B(x, -)$. 
The set  $V$ can be equipped with the distance $d_V$ given  by means of the formula (\ref{distance}). The corresponding poset is the four element lattice $\{-, 0, 1, +\}$ with $-< 0, 1< +$. The  retracts of powers of this lattice are all complete lattices. This is confirmed by the following fact. 

\begin{proposition}
A metric space $(E,d)$ over $V$ is hyperconvex iff the corresponding poset is a complete lattice. 
\end{proposition}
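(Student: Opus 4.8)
The statement to prove is: \emph{a metric space $(E,d)$ over $V = \{0,+,-,1\}$ is hyperconvex iff the corresponding poset $P_d = (E,\leq)$ is a complete lattice.} I would prove the two directions separately, translating the ball-intersection condition of hyperconvexity into purely order-theoretic language via the dictionary already set up: balls of radius $0$ are singletons, of radius $1$ the whole space, of radius $+$ the principal final segments $\uparrow x$, and of radius $-$ the principal initial segments $\downarrow x$; and $d(x,y)\leq r\oplus\overline s$ iff the two balls $B(x,r), B(y,s)$ are required to meet. So hyperconvexity says: any family of balls that pairwise satisfy the ``metric compatibility'' inequality has nonempty intersection.

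First I would prove ($P_d$ complete lattice $\Rightarrow$ hyperconvex). Take a family of balls $(B(x_i,r_i))_{i\in I}$ satisfying the pairwise condition $d(x_i,x_j)\leq r_i\oplus\overline{r_j}$ for all $i,j$. I would discard the radius-$1$ balls (they contain everything) and the radius-$0$ balls after checking the compatibility forces all such singleton-centers to coincide and to lie in every other ball. What remains is an ``up-set part'' $\bigcap_{i\in U}\uparrow x_i$ and a ``down-set part'' $\bigcap_{i\in D}\downarrow x_i$. The pairwise conditions translate exactly into: each $x_i$ ($i\in U$) is $\leq$ each $x_j$ ($j\in D$), and (from compatibility of two up-radius balls, or two down-radius balls, with each other) into comparabilities that make $\{x_i : i\in U\}$ have an upper bound and $\{x_j : j \in D\}$ a lower bound — more precisely the condition $d(x_i,x_j)\le + \oplus \overline{+} = +\vee - = 1$ is vacuous, so the genuine content is exactly the cross-comparabilities $x_i \le x_j$. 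Then $a := \bigvee_{i\in U} x_i$ exists by completeness, $b := \bigwedge_{j\in D} x_j$ exists, $a\leq b$, and any element of $[a,b]$ (e.g.\ $a$ itself) lies in the total intersection. The singleton/whole-space cases are folded in by an easy separate check.

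Second, the converse (hyperconvex $\Rightarrow$ complete lattice). Given an arbitrary subset $S\subseteq E$, I must produce $\bigvee S$. Consider the family of balls consisting of $\uparrow s$ for $s\in S$ together with all $\downarrow u$ for $u$ an upper bound of $S$ (if there is at least one upper bound; the case $S=\emptyset$ gives $\bigvee\emptyset = \bigwedge E$, handled symmetrically, and one must also secure the top element $\bigvee E$). These balls pairwise satisfy the compatibility inequality — $\uparrow s$ vs $\uparrow s'$: compatible since $s,s'$ both below any upper bound; $\uparrow s$ vs $\downarrow u$: $s\leq u$; $\downarrow u$ vs $\downarrow u'$: both above every $s$. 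Hyperconvexity yields a point $c$ in the intersection: $c\geq s$ for all $s\in S$, so $c$ is an upper bound of $S$, and $c\leq u$ for every upper bound $u$, so $c=\bigvee S$. The dual argument (or the observation that the poset $P_d$ coincides with its own order-dual construction under the involution $+\leftrightarrow -$) gives all infima, hence $P_d$ is a complete lattice. One subtlety: to get $\bigvee S$ when $S$ \emph{has} no upper bound a priori, one should first establish that $E$ has a greatest element — apply hyperconvexity to the family $\{\uparrow s : s\in E\}$, which is pairwise compatible for the same reason, wait, that needs the very thing we want; instead take $S=\emptyset$: the empty family, or rather the family of all $\downarrow u$ over the (possibly empty) set of upper bounds of $\emptyset$, i.e.\ all $\downarrow u$, $u\in E$ — these are pairwise compatible only if $E$ is downward... hmm. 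The clean fix is to note $\bigvee E$ is the top, obtained by applying hyperconvexity to $\{\uparrow s: s\in E\}\cup\{B(x,1)\}$ is not pairwise compatible in general; so the honest route is: apply hyperconvexity to the empty family (whose intersection is $E\neq\emptyset$, giving nothing) — rather, one proves completeness by showing every subset has a \emph{join}, starting from the fact that the set of \emph{all} elements, paired appropriately, forces a top via the down-set balls being vacuously compatible when there are no up-set balls.

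\textbf{Expected main obstacle.} The bookkeeping in the converse direction around empty/degenerate families and the existence of top and bottom elements is the fiddly part; the forward direction is routine once the translation is written down. I expect the cleanest presentation to first prove $E$ has top and bottom (by choosing the right small families of balls), then handle arbitrary joins. Everything else is a direct unravelling of the four ball-types and the single inequality $d(x,y)\leq r\oplus\overline s$.
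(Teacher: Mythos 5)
Your plan follows essentially the same route as the paper's proof: for (complete lattice $\Rightarrow$ hyperconvex) you reduce to the up-balls $U$ and down-balls $D$ after disposing of radii $0$ and $1$, and take $c:=\bigvee\{x_i: i\in U\}$, exactly as the paper does; for the converse you intersect $\{\uparrow s: s\in S\}$ with $\{\downarrow u: u\in S^{\Delta}\}$, again as in the paper. The only real problem is the ``subtlety'' you flag at the end, and it dissolves: the hypothesis of hyperconvexity is the \emph{distance inequality} $d(x_i,x_j)\leq r_i\oplus\overline{r_j}$, not any prior knowledge that the balls meet. For two balls of radius $+$ this inequality reads $d(s,s')\leq +\oplus\overline{+}=+\vee-=1$, which holds vacuously --- a fact you yourself state and use in the other direction. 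Hence hyperconvexity applied to the family $\{\uparrow s : s\in S\}$ \emph{alone} already yields a common point, i.e.\ an upper bound of $S$ (and with $S=E$, a top element); there is no circularity, and your stated reason for the compatibility of $\uparrow s$ with $\uparrow s'$ (``both below any upper bound'') is the wrong one. This two-step order --- first get $S^{\Delta}=\bigcap_{s\in S}B(s,+)\neq\emptyset$, then adjoin the balls $B(u,-)$ for $u\in S^{\Delta}$ and apply hyperconvexity once more --- is precisely how the paper organizes the argument, and it settles all the degenerate cases (empty $S$, no a priori upper bound, existence of top and bottom) without any separate bookkeeping.
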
 

\begin{proof} Suppose that $(E,d)$ is hyperconvex. Let $\leq:= \delta_{+}$ and $P_{d}:=(E, \delta_{+})$.  We prove that every subset $A$ has a supremum in $P_d$. This amounts to prove  that $A^{+}:= \{y\in E: x\leq y\;  \text{for all}\; x\in A\}$ has a least element. Since $(E,d)$ satisfies the convexity property, and $+\vee \overline {-}=1$, $B(x', +)\cap B(x'', +)\not= \emptyset$ for every $x',x'' \in E$; since $(E,d)$ satisfies the $2$-Helly property,  $A^{\Delta}= \bigcap _{x\in A}B(x, +)\not= \emptyset$. Applying again the convexity and $2$-Helly property, wet get that the intersection of balls $B(x,+)$ for $x\in A$ and $B(y,-)$, for $b\in A^{\Delta}$ is nonempty. This intersection contains just one element, this is the  supremum of $A$. A similar argument yields the existence of the infimum of $A$, hence $P_d$ is a complete lattice. Conversely, let $B(x_i,r_i)$, $(i\in I)$, be a family of balls such that $d(x_i, x_j)\leq r_i\vee \overline r_j$. We prove that $C:= \bigcap_{i\in I} B(x_i,r_i)\not =\emptyset$. If there is some $i\in I$ such that $r_i=0$, then $x_i\in C$. If not, let $A:= \{i\in I: r_i=+\}$, $B:=  \{j\in I: r_j= -\}$. Then $x_i\leq x_j $ for all $x_i\in A$, $x_j\in B$. Set $c:= \bigvee A$ and observe that $c\in C$. 
\end{proof}

Since $0$ is the only inacessible element of $V$, Theorem \ref{thm:cor3} applies: \emph{Every commuting family of order-preserving maps on a complete lattice has a common fixed point}. This is Tarski's theorem (in full). 

Posets coming from  $V$-metric spaces with a compact and normal structure are a bit more general than complete lattice, hence Theorem \ref{thm:cor} on compact normal structure could say a bit more than Tarski's fixed point theorem. In fact, for one order-preserving map,  this is no more as Abian-Brown's fixed-point theorem. 

Indeed, let us recall that a poset $P$ is \emph{chain-complete} if every nonempty chain in  $P$  has a supremum and an infimum.  

We prove below  that:

\begin{proposition}\label{lem:chain-complete}  If the collection of intersection of balls of a poset $P:= (E, \leq) $ satisfies the f.i.p.,  that is $\mathcal B_{E}$ is compact, then  $P$ is chain complete (converse false). \end{proposition}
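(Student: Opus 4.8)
The statement to prove is: if the collection $\mathcal{B}_E$ of (intersections of) balls of a poset $P := (E,\leq)$ has the finite intersection property, then $P$ is chain complete; that is, every nonempty chain $C$ in $P$ has both a supremum and an infimum. The plan is to fix a nonempty chain $C \subseteq E$ and show it has a supremum; the existence of an infimum follows by applying the same argument to the dual poset $P^{op} = (E, \leq^{-1})$, which has the same (intersection-of-)ball structure since the balls $B(x,\leq)$ and $B(x,\leq^{-1})$ are simply swapped, and the f.i.p.\ hypothesis is self-dual.

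\textbf{Construction of the supremum.} Recall that in the relational language, $\uparrow x = B(x,\leq)$ and $\downarrow x = B(x,\leq^{-1})$. The set of upper bounds of $C$ is $C^{+} := \bigcap_{x \in C} B(x,\leq)$. I would first show $C^{+} \neq \emptyset$: the family $\{B(x,\leq) : x\in C\}$ has the property that every finite subfamily has nonempty intersection, because $C$ is a chain, so a finite set $\{x_1,\dots,x_n\}\subseteq C$ has a maximum $x_k$, and $x_k \in \bigcap_{i} B(x_i,\leq)$. By the f.i.p.\ hypothesis on $\mathcal{B}_E$, the full intersection $C^{+}$ is nonempty. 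Next, the supremum of $C$, if it exists, is the least element of $C^{+}$, equivalently an element of $C^{+} \cap \bigcap_{y \in C^{+}} B(y, \leq^{-1})$. So I would consider the family of balls $\{B(x,\leq) : x \in C\} \cup \{B(y,\leq^{-1}) : y \in C^{+}\}$ and check it has the f.i.p. A finite subfamily involves finitely many $x_1,\dots,x_m \in C$ and finitely many $y_1,\dots,y_n \in C^{+}$; since $C$ is a chain it has a maximum among the $x_i$, call it $x^*$, and every $y_j$ is an upper bound of all of $C$ hence of $x^*$, so one must exhibit a point $z$ with $x^* \leq z$ (hence $x_i \leq z$ for all $i$) and $z \leq y_j$ for all $j$. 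Here I would take $z := $ (any element of a suitable smaller intersection, or more simply) argue that it suffices to have $z$ in the nonempty set $\uparrow x^* \cap \downarrow y_1 \cap \cdots \cap \downarrow y_n$ — but this set contains $x^*$ itself if the $y_j$ dominate $x^*$, which they do. So in fact every finite subfamily contains the point $x^*\in C$ (the max of the chosen finite subchain), since $x^* \leq y_j$ for each $j$. Thus the finite intersections are nonempty, and by f.i.p.\ the total intersection $C^{+} \cap \bigcap_{y \in C^{+}} B(y,\leq^{-1})$ is nonempty; any element $s$ of it is an upper bound of $C$ lying below every upper bound of $C$, i.e.\ $s = \sup C$.

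\textbf{The infimum and the parenthetical.} For the infimum, apply the above to $P^{op}$: its ball collection is $\{B(x,\leq^{-1})\} \cup \{B(x,\leq)\}$, i.e.\ the same family $\mathcal{B}_E$, so it again has the f.i.p., whence every nonempty chain of $P^{op}$ — equivalently of $P$ — has a supremum in $P^{op}$, which is an infimum in $P$. Therefore $P$ is chain complete. The claim that the converse is false should be settled by a small explicit example of a chain-complete poset whose ball intersections fail the f.i.p.; for instance one can take a poset obtained as a disjoint-style union witnessing that a pairwise-intersecting family of principal segments need not have a common point (e.g.\ two incomparable infinite ascending chains with no common upper bound, packaged so that all chains still have sups and infs), and exhibit balls violating f.i.p.

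\textbf{Expected main obstacle.} The genuinely delicate point is the f.i.p.\ check for the \emph{mixed} family $\{B(x,\leq):x\in C\}\cup\{B(y,\leq^{-1}):y\in C^{+}\}$: one must be careful that finitely many upper bounds $y_j$ need not themselves form a chain, so there is no "least" $y_j$ to use as a witness; the resolution is that one does not need one — the max $x^*$ of the finitely many chosen $x_i$ already lies below every upper bound of $C$, hence below every $y_j$, so $x^*$ witnesses nonemptiness of the finite intersection. Once that observation is in place the proof is a routine double application of the hypothesis.
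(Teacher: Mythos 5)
Your proof of the main implication is correct and is in substance the same as the paper's: the paper routes the argument through the language of gaps (Lemma \ref{lem:gap}, where the f.i.p.\ is shown equivalent to every gap containing a finite subgap, and a chain $C$ paired with its set of upper bounds cannot contain a finite subgap because the maximum of a finite subchain sits in between), which is exactly your direct f.i.p.\ application to the family $\{B(x,\leq):x\in C\}\cup\{B(y,\leq^{-1}):y\in C^{+}\}$ with $x^{*}$ as witness. One concrete correction on the parenthetical: your sketched counterexample for the converse does not work, because two incomparable ascending chains with no common upper bound already have \emph{empty} finite intersections $\uparrow x\cap\uparrow y$, so they do not violate the f.i.p.\ (which only demands a nonempty total intersection when all finite intersections are nonempty). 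A working example is an infinite antichain $a_{1},a_{2},\dots$ together with pairwise incomparable elements $b_{n}$ with $a_{i}<b_{n}$ exactly when $i\leq n$: every chain has at most two elements, hence the poset is chain complete, yet $\bigcap_{n}\uparrow a_{n}=\emptyset$ while each finite intersection contains some $b_{n}$.
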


Abian-Brown's theorem \cite {abian-brown} asserts that in a chain-complete poset with a least or largest element, every order-preserving map has a fixed point. 

The fact that the collection of intersection of balls of $P$ has a normal structure  means that every nonempty intersection of balls of $P$ has either a least or largest element. Being the intersection of the empty family of balls,  $P$ has either  a least element or a largest element. 

Consequently, if $P$ has a compact and normal structure, we may suppose without loss of generality  that it has a least element. Since every nonempty chain have a supremum, it follows from Abian-Brown's theorem that every order preserving map has a fixed point. 

On an other hand,  a description of posets with a compact and normal structure has yet to come. 

The proposition above follows from properties of gaps we rassemble below. %
%

A pair of subsets $(A, B)$ of  $E$ is called a \emph{gap} of $P$ if every element of $A$ is dominated by every element of $B$ but there is no element of $E$ which dominates every element of $A$ and is dominated by every element of $B$ (cf. \cite{duffus-rival}). In other words:  
$(\bigcap_{x\in A} B(x, \leq ))\cap (\bigcap_{y\in B} B(y, \geq ))= \emptyset$ while $B(x, \leq )\cap B(y, \geq ) \not =\emptyset$ for every $x\in A, y\in B$. A \emph{subgap} of $(A,B)$ is any pair $(A', B')$ with $A'\subseteq A$, $B'\subseteq B$, which is a gap. The gap $(A,B)$ is \emph{finite} if $A$ and $B$ are finite, otherwise it is \emph{infinite}.  Say that an ordered set $Q$ \emph{preserves} a gap $(A,B)$ of $P$ if there is an order-preserving map $g$ of $P$ to $Q$ such that $(g(A), g(B))$ is a gap of $Q$. On the preservation of gaps, see \cite{nevermann-wille}.

\begin{lemma}\label{lem:gap}
Let $P:= (E, \mathcal E)$ be a poset. Then:
\begin{enumerate}[{(a)}]
\item $P$ is  a complete lattice iff $P$ contains no gap; 
\item An order-preserving map $f:P\rightarrow Q$ is an embedding preserving all gaps of $P$ iff it preserves all holes of $P$ with values in $V\setminus \{0\}$ iff $f(P)$ is  a one-local retract of $Q$; 
\item $\mathcal B_{\mathbf E}$ satisfies the f.i.p. iff every gap of $P$ contains a finite subgap iff every hole is finite. 

\end{enumerate}

\end{lemma}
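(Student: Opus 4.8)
The plan is to treat the three items essentially in sequence, extracting as much as possible from the dictionary between posets, $V$-metric spaces over the four-element Heyting algebra $V=\{0,+,-,1\}$ described in this subsection, and the binary relational system $\mathbf E=(E,\{\leq,\leq^{-1}\})$. Throughout, the translation of balls is the one already recorded: balls of radius $0$ are singletons, the ball of radius $1$ is all of $E$, the ball $B(x,+)$ is $\uparrow x$ and the ball $B(x,-)$ is $\downarrow x$. A gap $(A,B)$ is, by the definition just given, exactly a pair of families of balls $\{\uparrow a: a\in A\}\cup\{\downarrow b:b\in B\}$ whose total intersection is empty but all of whose pairwise intersections are nonempty; equivalently, $h$ defined by $h(a)=+$ for $a\in A$, $h(b)=-$ for $b\in B$, and $h(x)=1$ otherwise is a hole of $P$ (as a $V$-metric space) with values in $V\setminus\{0\}$.

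For item $(a)$, the forward direction is immediate: in a complete lattice, given $A\subseteq E$ with every element of $A$ below every element of $B$, the element $\bigvee A$ (equivalently $\bigwedge B$ when $B$ is the set of upper bounds) witnesses that $(A,B)$ is not a gap, so there is no gap. Conversely, if $P$ has no gap, I would take an arbitrary subset $X$ and let $A:=X$, $B:=X^{+}$ the set of upper bounds of $X$; then every element of $A$ is dominated by every element of $B$, so since $(A,B)$ is not a gap there is some $c$ dominating all of $A$ and dominated by all of $B$, i.e. $c=\bigvee X$; dually for infima, so $P$ is a complete lattice.

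For item $(b)$ I would chain three equivalences. First, an order-preserving $f:P\to Q$ preserves all gaps and is an embedding iff it preserves all holes of $P$ with values in $V\setminus\{0\}$: a general hole with values in $V\setminus\{0\}$ is, after translating radii, a combination of final segments, initial segments, and singletons; the singleton part (radius-$0$ values) of the hole is exactly what forces $f$ to be injective, i.e. an embedding, while the remaining data is precisely a gap in the sense above, so hole-preservation with these values matches "embedding + gap-preservation." Second, $f$ preserves all holes of $P$ with values in $V\setminus\{0\}$ iff $f(P)$ is a one-local retract of $Q$: this is the specialization of Lemma \ref{lem:hole-localretract} to the present $V$, once one checks that for this $V$ the "finite" values $0$ that could occur in a hole are harmless — more precisely, a map is hole-preserving iff it preserves holes with values in $V\setminus\{0\}$ together with injectivity, which Lemma \ref{lem:hole-localretract} equates to being an isometry onto a one-local retract of the image; and an isometry of $P_d$ is exactly an order-embedding. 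The reflexivity and involutivity of $\mathbf E$ (noted at the start of the subsection) are what make the converse direction of Lemma \ref{lem:one-local-retract} available here.

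For item $(c)$, the equivalence "$\mathcal B_{\mathbf E}$ satisfies the f.i.p. iff every hole is finite" is Lemma \ref{lem:hole} once one verifies that $V$ here is well-founded — it is a four-element lattice, hence trivially well-founded — so that lemma applies in both directions. The remaining equivalence, "every hole is finite iff every gap contains a finite subgap," is again the translation: a hole $h$ with values in $V\setminus\{0\}$ corresponds to a gap $(A,B)$, and $h$ being finite (some finite $F$ with $\bigcap_{x\in F}B(x,h(x))=\emptyset$) says exactly that the restriction to $F$ still has empty intersection, i.e. $(A\cap F,\,B\cap F)$ is a finite subgap; conversely a finite subgap yields a finite witnessing family. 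One must separately dispose of holes that do take the value $0$ somewhere: if $h(x)=0$ for some $x$ then $B(x,0)=\{x\}$, and the hole is finite iff $\{x\}$ already misses one of the other balls, which is again a finite condition, so such holes cause no trouble. The main obstacle I anticipate is bookkeeping in item $(b)$: one has to be careful that "hole with values in $V\setminus\{0\}$" really does capture order-embedding-plus-gap-preservation and does not accidentally impose or omit injectivity, and that the passage through Lemma \ref{lem:hole-localretract} (which is stated for hole-preserving maps, i.e. with the $0$-values included) is correctly reconciled with the restricted value set; getting this matching exactly right, rather than the topology of the argument, is where the care is needed.
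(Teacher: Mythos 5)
Your overall architecture coincides with the paper's: item (a) via the pair $(A,A^{\Delta})$, item (b) via the dictionary between gaps and holes plus Lemma \ref{lem:hole-localretract}, item (c) via Lemma \ref{lem:hole} and the observation that the four-element $V$ is well-founded. Items (a) and (c) are fine as you have them.

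There is, however, a genuine flaw in your dictionary for item (b). You write that a hole with values in $V\setminus\{0\}$ decomposes into final segments, initial segments, \emph{and singletons}, and that ``the singleton part (radius-$0$ values) is exactly what forces $f$ to be injective.'' But a hole with values in $V\setminus\{0\}$ takes values only in $\{+,-,1\}$, so its balls are of the form $\uparrow x$, $\downarrow y$ or $E$; there is no singleton part, and your proposed source of the embedding property evaporates. Relatedly, such a hole is \emph{not} always a gap: setting $A:=h^{-1}(+)$ and $B:=h^{-1}(-)$, the pair $(A,B)$ is a gap only when every element of $A$ is dominated by every element of $B$, and a hole need not satisfy this. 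The correct accounting (and the one the paper uses, via a case split) is that it is precisely these non-gap holes that encode the embedding condition: for $x\not\leq y$ the map $h$ with $h(x)=+$, $h(y)=-$ and $h\equiv 1$ elsewhere is a hole because $\uparrow x\cap\downarrow y=\emptyset$, and preservation of $h$ forces $\uparrow f(x)\cap\downarrow f(y)=\emptyset$, i.e.\ $f(x)\not\leq f(y)$; this yields that $f$ is an order-embedding (hence injective) with no appeal to radius-$0$ balls. Conversely, to show an embedding preserving gaps preserves all $V\setminus\{0\}$-valued holes, one must split: if some $a\in h^{-1}(+)$, $b\in h^{-1}(-)$ satisfy $a\not\leq b$, the embedding property alone makes $h_f$ a hole; only in the remaining case is $(A,B)$ a gap and gap-preservation invoked. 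Your plan as written would stall at exactly the point you flagged as delicate; with this repair it matches the paper's argument. (Your reconciliation of the restricted value set with the full hole-preservation of Lemma \ref{lem:hole-localretract} is only gestured at, but it does go through once $f$ is known to be an embedding.)
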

\begin{proof}
$(a)$. Let $(A, B)$ be a pair of subsets of $E$ such that every element of $A$ is dominated by every element of $B$.  Let  $A^{\Delta}:= \{y\in E: x\leq y \; \text{for all} \; x\in A\}$. Then, trivially,  $A\subseteq A^{\Delta}$ and every element of $A^{\nabla}$  dominates every element of $A$; furthermore $(A, A^{\Delta})$ is not a gap iff $A$ has a supremum. Thus,  if $P$ is a complete lattice, $A$ has a supremum, hence $(A, A^{\Delta})$ is not a gap and hence $(A,B)$ is not a gap. Conversely, if $P$ contains no gap, $(A,A^{\Delta})$ is not a gap and thus $A$ has a supremum. It follows that $P$ is a complete lattice.

$b)$. Suppose that $f$ is an embedding preserving all gaps. Let $h$ be a hole of $P$ with values in $V\setminus\{0\}$ and $h_f$  be its image. Let $A:= \{x\in P: h(x)=+\}$ and $B:= \{y\in P: h(y)=-\}$. If there is some $a\in A$, $b\in B$ such that $a\not \leq b$ then, since $f$ is an embedding, $f(a)\not \leq f(b)$ and  $h_f$ is  a hole of $Q$. Otherwise,  $B\subseteq A^{\Delta}$. Since $h$ is a hole in $P$, $(A,B)$ is a gap of $P$. Since $f$ preserves all gaps of $P$, $(f(A),f(B))$ is a gap of $Q$. It turns out that $h_f$ is a gap of $Q$. For the converse, let $(A,B)$ be a gap of $P$. We claim that $(f(A), f(B))$ is a gap of $Q$. Since $f$ is order preserving $(f(A)\subseteq f(B)^{\Delta}$. We only need to check that there is no element between $f(A)$ and $f(B)$. Let $h: P\rightarrow V\setminus \{0\}$ defined by setting $h(a):=+$ if $a\in A$,  $h(b):=-$ if $b\in B$ and $h(c)=1$ if $c\in E\setminus A\cup B$. Then, clearly, $h$ is a hole of $P$; since $f$ preserves it, $h_f$ is a hole of $Q$. Hence $\emptyset =\bigcap_{y \in Q} B(y, h_f(y))= \bigcap_{x\in A\cup B}B(f(x), h_f(x))$. If follows that $(f(A), f(B))$ is a hole of $Q$.  The equivalence with the last assertion is essentially Lemma \ref{lem:hole-localretract}.  

$(c)$. Suppose  that   $\mathcal B_{\mathbf E}$ satisfies the f.i.p.  Let$(A,B)$ be a gap. If  every finite pair $(A', B')$ with $A'\subseteq A$ and $B'\subseteq B$ is not a gap, then the finite intersections of $\uparrow a \cap \downarrow b$, with $a\in A$, $b\in B$ are nonempty. From the f.i.p. property, the whole intersection $\bigcap_{a\in A, b\in B}\uparrow a \cap \downarrow b$ is nonempty, contradicting the fact that $(A,B)$ is a gap. Conversely, let $\mathcal F$ be a family of members of $\mathcal B_{\mathbf E}$ whose finite intersections are nonempty. Each member of $\mathcal F$ being an intersection of balls,  each of the form $B(x, \leq)$  or $B(y, \geq)$,  we may in fact suppose that these  members are of the form $B(x, \leq)$  or $B(y, \geq)$. Hence, we may suppose that  there are two sets $A$ and $B$ such that $\mathcal F: =\{ B(x, \leq): x\in A\} \cup \{B(y, \geq ): y\in B\}$. Since  $(A,B)$  contains no finite gap, the pair $(A, B)$ is not a gap, hence $\bigcap \mathcal F\not = \emptyset$. The equivalence with the last assertion is Lemma \ref{lem:hole}. 
\end{proof}

We only mention some examples. 

Let  $\bigvee$ be the  $3$-element poset consisting of $0, +, -$ with $0< +, -$ and $+$ incomparable to $-$. We denote by $\bigwedge $ its dual. 
 Then the reader will observe that retracts of powers of $\bigvee$ have a compact and normal structure. 

 Theorem \ref{thm:cor}  above yields a fixed point theorem for a commuting family of order-preserving maps on any retract of   power of $\bigvee$ or of  power of  $\bigwedge$. But this result  says nothing about  retract of   products  of $\bigvee$ and $\bigwedge$.  
 
 These two posets fit in the category of fences. A \emph{fence} is a poset whose the comparability graph is a path. For example, a two-element chain is a fence. Each larger fence has two orientations, for example on the three vertices path, these orientations yield the $\bigvee$ and the $\bigwedge$. 
 
 From Theorem \ref{thm:cor4},  proved in  Subsection \ref{subsection:zigzag},  it  will follow:

\begin{theorem}\label{thm:cor5}
If a poset $Q$ is a retract of a product $P$ of   finite fences of bounded length, every commuting set of order-preserving maps on $Q$ has a fixed point.
\end{theorem}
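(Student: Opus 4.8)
The plan is to reduce this statement to Theorem \ref{thm:cor4} (the fixed point theorem for retracts of products of reflexive oriented zigzags of bounded length) by exhibiting a finite fence of length $\ell$ --- more precisely, its associated reflexive involutive binary relational system --- as a retract of a reflexive oriented zigzag of comparable bounded length. First I would fix the dictionary from Subsection \ref{subsection:orderedset}: a poset $P = (E, \leq)$ is encoded by the reflexive involutive system $\mathbf E := (E, \{\leq, \leq^{-1}\})$, order-preserving maps are exactly the endomorphisms of $\mathbf E$, and the class of systems under consideration (retracts of products of reflexive oriented zigzags of bounded length) is closed under finite products and under retraction by construction. So it suffices to show that each finite fence $F$ of length $\ell$, viewed as such a system $\mathbf F$, is a retract of a reflexive oriented zigzag $\mathbf Z_m$ of some length $m = m(\ell)$ depending only on $\ell$.

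The key step is the retraction. A fence on vertices $v_0, v_1, \dots, v_\ell$ has comparabilities alternating in direction along the path; an oriented zigzag of the same combinatorial shape but with all arrows made reflexive is the model object in Theorem \ref{thm:cor4}. I would check that the identity-on-vertices map realizes the fence's system as (isomorphic to) the oriented zigzag's system directly when the zigzag distance on the path graph recovers exactly the two relations $\leq, \leq^{-1}$ of the fence; if the correspondence is not literally an isomorphism, one instead embeds $\mathbf F$ isometrically into a product of small oriented zigzags using Theorem \ref{theo:isometric} (the characterization of graphs isometrically embeddable into products of oriented zigzags) and then produces a retraction onto the image. Either way, boundedness of the fences' lengths gives boundedness of the zigzags' lengths, which is the hypothesis Theorem \ref{thm:cor4} requires.

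Once $\mathbf F$ is a retract of a bounded-length reflexive oriented zigzag, a product $P$ of finite fences of bounded length is a retract of the corresponding product of bounded-length reflexive oriented zigzags (retractions pass to products coordinatewise), and a further retract $Q$ of $P$ is still such a retract by transitivity of retraction. Theorem \ref{thm:cor4} then applies to $\mathbf Q$ and yields that every commuting family of endomorphisms of $\mathbf Q$ --- equivalently, every commuting set of order-preserving maps on $Q$ --- has a common fixed point; in particular a single such map, and indeed any commuting set, has a fixed point, which is the assertion. The main obstacle I expect is the retraction lemma in the second paragraph: verifying that a finite fence (with its order-pair system) sits as a retract inside a reflexive oriented zigzag of controlled length requires matching the zigzag graph distance to the fence's $\{\leq, \leq^{-1}\}$ structure and exhibiting an explicit hole-preserving retraction, rather than merely an isometric embedding; everything downstream (closure under products and retracts, the translation between posets and relational systems) is routine given the machinery already developed.
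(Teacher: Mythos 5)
Your proposal is correct and follows the paper's own (only sketched) route: the paper simply asserts that Theorem \ref{thm:cor5} follows from Theorem \ref{thm:cor4}, the point being exactly your reduction via products and retracts together with the identification of order-preserving maps with homomorphisms of the reflexive digraph $(E,\leq)$. The hedge in your key step is unnecessary: since transitivity forces the comparabilities of a fence to alternate, the order relation of a finite fence of length $\ell$ \emph{is} literally the edge relation of an alternating reflexive oriented zigzag of length $\ell$, so no separate retraction or isometric-embedding argument is needed there.
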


Since every complete lattice is a retract of a power of the two-element chain, this result contains Tarski's fixed point theorem. 

\subsection{The case of oriented graphs}\label{subsection:orientedgraphs}

A \emph{directed graph} $G$ is a pair $(E, \mathcal E)$ where $\mathcal E$ is a binary relation on $E$. We say that  $G$ is  \emph{reflexive} if $\mathcal E$ is reflexive and that $G$ is \emph{oriented} if $\mathcal E$ is antisymmetric (that is $(x,y)$ and $(x,y)$ cannot be in $\mathcal E$ simultaneously except if $x =y$). If $\mathcal E$ is symmetric,  we identifies it with a subset of pairs of $E$ and we say that the graph is \emph{undirected}. 

If  $G:= (E, \mathcal E)$ and $G':= (E', \mathcal E')$ are two directed graphs,  an \emph{homomorphism from $G$ to $G'$} is a map $h: E\rightarrow  E'$ such that $(h(x), h(y)) \in \mathcal E'$ whenever $(x,y)\in \mathcal E$ for every $(x,y)\in E\times E$.

 Let us recall that a finite \emph{path} is an undirected graph $L:=(E, \mathcal E)$ such that one can enumerate the vertices into a non-repeating sequence  $v_0, \dots, v_n$ such that edges are the  pairs $\{v_i,v_{i+1}\}$ for $i<n$.  A \emph{reflexive zigzag} is a reflexive graph such that the symmetric hull is a path. If  $L$ is a reflexive oriented zigzag, we may enumerate the vertices in a non-repeating sequence $v_0:= x, \dots,  v_{n}:= y$ and to this enumeration we may  associate the finite sequence $ev(L):= \alpha_0\cdots \alpha_i \cdots\alpha_{n-1}$ of $+$ and $-$,   where $\alpha_i:= +$ if $(v_i,v_{i+1})$ is an edge and $\alpha_i:= -$ if $(v_{i+1},v_{i})$ is an edge. We call  such a sequence a \emph{word} over the \emph{alphabet} $\Lambda:= \{+,-\}$ , If the path has just one vertex, the corresponding  word  is   the empy word, that we denote by $\Box$.  Conversely, to a finite word  $u:= \alpha_0\cdots \alpha_i \cdots\alpha_{n-1}$ over $\Lambda$ we may associate the reflexive oriented zigzag $L_u:= (\{0, \dots n\}, \mathcal L_{u})$ with end-points $0$ and $n$ (where $n$ is the length $\ell(u)$ of $u$) such that  $\mathcal L_{u}= \{(i,i+1): \alpha_i=+\}\cup \{(i+1, i): \alpha_i=-\}\cup \Delta_{\{0, \dots, n\}}$. 
 
 \subsection{The zigzag distance}\label{subsection:zigzag}
 
 Let $G:= (E, \mathcal E)$ be a reflexive directed graph. For each pair $(x,y)\in E\times E$, the \emph{zigzag distance} from $x$ to $y$ is the set $d_G(x,y)$ of words $u$ such that there is a non-expansive map $h$ from $L_u$ into $G$ which send $0$ on $x$ and $\ell(u)$ on $y$.
 
 This notion is due to Quilliot \cite{Qu1, Qu2} (Quilliot considered reflexive directed graphs, not necessarily oriented, and in defining the distance, considered only  oriented paths). A general study is presented in \cite{jawhari-al}; some developments appear in \cite{Sa} and \cite{KP2}. 
 
 Because of the reflexivity of $G$, every word obtained from a word belonging to $d_G(x,y)$ by inserting letters  will be also into $d_G(x,y)$. This leads to  the following framework. 
 
 Let $\Lambda^*$ be  collection of words over the alphabet  $\Lambda:= \{+,-\}$. Extend the involution on $\Lambda$ to $\Lambda^*$ by setting $\overline \Box:= \Box$ and $\overline {u_0\cdots u_{n-1}}:= \overline{u_{n-1}}\cdots \overline{u_{0}}$ for every word in $\Lambda^*$. Order  $\Lambda^{\ast}$ by the \emph{subword ordering}, denoted by $\leq$. If  $u: = \alpha_{1} \alpha_{2} \ldots \alpha_{m},  v: = \beta_{1} 
\beta_{2} \ldots \beta_{n}\in \Lambda^{*}$ set  
$$u\leq v \; 
\text{if and only if }\; 
\alpha_{j}  =\beta_{i_{j}}\ {\rm for\ all}\ j = 1, \ldots m\; \text{with some}\; 1\leq j_1<\dots j_m\leq n.$$
Let $\mathbf {F}(\Lambda^*)$ be  the set of final segments of $\Lambda^*$, that is subsets $F$ of $\Lambda^{\ast}$ such that $u\in F$ and $u\leq v$ imply $v\in F$. 
Setting $\overline X:= \{\overline u: u\in X\}$ for a set $X$ of words, we observe that $\overline X$ belongs to  $\mathbf {F}(\Lambda^*)$.   
Order $\mathbf {F}(\Lambda^*)$  by reverse of the inclusion, denote by   $0$ its least element (that is $\Lambda^*$), set $X\oplus Y$ for the concatenation $X\cdot Y:= \{uv: u\in X, v\in Y\}$. 
Then,  one immediately see that $\mathcal H_{\Lambda}:= (\mathbf {F}(\Lambda^*), \oplus, \supseteq,  0, -)$ is an involutive Heyting algebra. This leads us to consider distances and metric spaces over $\mathcal H_{\Lambda}$. 
There are two simple and crucial facts about the consideration of the zigzag distance(see \cite {jawhari-al}).

\begin{lemma} A map from a reflexive directed graph $G$ into an other is a graph-homomorphism iff it is non-expansive.
\end{lemma}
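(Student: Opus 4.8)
The statement to prove is: a map $f$ between reflexive directed graphs $G \to G'$ is a graph-homomorphism if and only if it is non-expansive with respect to the zigzag distances $d_G, d_{G'}$ (valued in $\mathcal H_\Lambda$). The overall strategy is to unwind both definitions against the description of the zigzag distance in terms of non-expansive maps out of the one-letter words $L_+$ and $L_-$, and to observe that a graph-homomorphism composes with such maps to give maps of the same type, while conversely non-expansiveness already contains the edge condition as the special case of the words of length one.

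\textbf{Step 1: the easy direction (homomorphism $\Rightarrow$ non-expansive).} Suppose $f\colon G\to G'$ is a graph-homomorphism. Fix $x,y\in E(G)$ and let $u\in d_G(x,y)$; by definition there is a non-expansive map $h\colon L_u\to G$ with $h(0)=x$, $h(\ell(u))=y$. Since non-expansive maps out of a zigzag $L_u$ into a reflexive directed graph are exactly graph-homomorphisms $L_u\to G$ (this is an instance of the Lemma immediately preceding the statement, applied with $L_u$ in place of $G$), and the composite of two graph-homomorphisms is a graph-homomorphism, $f\circ h\colon L_u\to G'$ is a graph-homomorphism, hence non-expansive, and it sends $0\mapsto f(x)$, $\ell(u)\mapsto f(y)$. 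Therefore $u\in d_{G'}(f(x),f(y))$. Since $u$ was an arbitrary element of $d_G(x,y)$, we get $d_G(x,y)\subseteq d_{G'}(f(x),f(y))$, i.e. $d_{G'}(f(x),f(y))\le d_G(x,y)$ in the order of $\mathcal H_\Lambda$ (recall this order is reverse inclusion of final segments). So $f$ is non-expansive.

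\textbf{Step 2: the converse (non-expansive $\Rightarrow$ homomorphism).} Suppose $f$ is non-expansive and let $(x,y)\in\mathcal E(G)$ be an edge. Then the identity-on-vertices assignment $0\mapsto x$, $1\mapsto y$ is a graph-homomorphism $L_+\to G$ (here $L_+ = L_u$ for the one-letter word $u=+$), hence non-expansive, so the word $+$ lies in $d_G(x,y)$. By non-expansiveness of $f$, $+\in d_{G'}(f(x),f(y))$, i.e. there is a non-expansive map $L_+\to G'$ sending $0\mapsto f(x)$, $1\mapsto f(y)$; being a graph-homomorphism out of $L_+$, it witnesses $(f(x),f(y))\in\mathcal E(G')$. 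Hence $f$ preserves $\mathcal E(G)$, i.e. $f$ is a graph-homomorphism.

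\textbf{Anticipated obstacle.} There is no deep difficulty; the only thing to be careful about is the bookkeeping around the two facts that (a) non-expansive maps $L_u\to G$ coincide with graph-homomorphisms $L_u\to G$ for reflexive $G$ (so that composing with $f$ stays inside the class defining the distance), and (b) the single-letter words $+$ and $-$ detect exactly the edge relation and its inverse, so that the edge condition for $f$ is literally the length-one instance of non-expansiveness. One should also note that reflexivity of $G$ (and $G'$) is what makes $d_G$ a genuine $\mathcal H_\Lambda$-distance (final segments closed under inserting letters), so the monoid-valued formalism applies; but this was already established when $\mathcal H_\Lambda$ was introduced, so it need not be reproved here. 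The write-up is therefore essentially two short paragraphs, one for each implication.
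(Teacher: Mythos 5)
The paper states this lemma without proof (it is quoted from Jawhari--Misane--Pouzet), and your argument is the standard one that fills the gap correctly: composition with a homomorphism preserves the witnesses defining $d_G$, and the length-one words $+$, $-$ detect the edge relation, so non-expansiveness restricted to one-letter words is exactly the homomorphism condition. The only point to clean up is your appeal to ``the Lemma applied with $L_u$ in place of $G$'' in Step 1, which as written is circular; it should instead be resolved by reading the paper's definition of $d_G(x,y)$ with \emph{graph homomorphisms} $h\colon L_u\to G$ as the witnesses (the literal ``non-expansive'' phrasing would make the definition of $d_G$ itself circular), after which $f\circ h$ is a witness for $u\in d_{G'}(f(x),f(y))$ simply because homomorphisms compose, with no invocation of the lemma being proved.
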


\begin{lemma}
The distance $d$ of  a metric space $(E,d)$ over $\mathcal H_{\Lambda}$ is the zigzag distance of a reflexive directed graph $G:= (E, \mathcal E)$ iff  it satisfies the following property for all  $x,y,z \in E$, $u,v\in \mathbf {F}(\Lambda^*)$:
 $u.v \in d(x,y)$ implies $u\in d(x,z)$ and $v\in d(z,y)$ for some $z\in E$. When this condition holds, $(x,y)\in \mathcal E$ iff $+\in d(x,y)$. 
\end{lemma}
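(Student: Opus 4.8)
The plan is to prove the two implications separately, working directly from the definition of the zigzag distance $d_G$, the three defining properties (i)--(iii) of a $V$-distance (Lemma \ref{lem:distance}), and the concrete shape of $\mathcal H_{\Lambda}$: here $\oplus$ is concatenation, $\leq$ is reverse inclusion, and $0=\Lambda^{\ast}$ is at once the least element of $V$ and the neutral element of $\oplus$. In particular a word $w$ belongs to the final segment $d(x,y)$ exactly when $d(x,y)\leq \uparrow w$, and it is convenient to read the hypothesis ``$u\cdot v\in d(x,y)$'' in this way. (One should also recall, as in \cite{jawhari-al}, that $d_G$ really is a $V$-distance over $\mathcal H_{\Lambda}$, reflexivity of $G$ being what forces each $d_G(x,y)$ to be a final segment; this is what makes the ``only if'' direction a statement about an object of the right type.)

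For the ``only if'' direction I would assume $d=d_G$ and first settle the final clause: $L_{+}$ is the graph on $\{0,1\}$ whose sole non-loop edge is $(0,1)$, so a homomorphism $L_{+}\to G$ taking $0$ to $x$ and $1$ to $y$ exists iff $(x,y)\in\mathcal E$ (the loops cause no trouble since $G$ is reflexive); hence $+\in d(x,y)$ iff $(x,y)\in\mathcal E$. For the factorization property, given a word $uv\in d_G(x,y)$ witnessed by a homomorphism $h\colon L_{uv}\to G$ with $h(0)=x$ and $h(\ell(uv))=y$, put $k:=\ell(u)$, observe that $L_{uv}$ restricted to $\{0,\dots,k\}$ is a copy of $L_u$ and restricted to $\{k,\dots,\ell(uv)\}$ a copy of $L_v$, and set $z:=h(k)$; the two restrictions of $h$ then witness $u\in d_G(x,z)$ and $v\in d_G(z,y)$.

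The ``if'' direction is the substance. Assuming the factorization property, set $\mathcal E:=\{(x,y)\in E^{2}: +\in d(x,y)\}$ and $G:=(E,\mathcal E)$; since $d(x,x)=0=\Lambda^{\ast}$ by (i) together with minimality of $0$, and $+\in\Lambda^{\ast}$, the graph $G$ is reflexive, and its edge relation is the one prescribed in the statement by construction. It then remains to prove $d=d_G$, i.e. that a word $w=\alpha_{0}\cdots\alpha_{n-1}$ lies in $d(x,y)$ iff it lies in $d_G(x,y)$. For $d_G(x,y)\subseteq d(x,y)$: a witnessing homomorphism gives vertices $z_i:=h(i)$ with $z_0=x$, $z_n=y$, and for each $i<n$ one has (as a length-one word) $\alpha_i\in d(z_i,z_{i+1})$ --- directly if $\alpha_i=+$, and via (iii) if $\alpha_i=-$ (then $(z_{i+1},z_i)\in\mathcal E$, so $+\in d(z_{i+1},z_i)$ and hence $-\in\overline{d(z_{i+1},z_i)}=d(z_i,z_{i+1})$) --- whence, iterating the triangle inequality (ii), $d(x,y)\supseteq d(z_0,z_1)\oplus\cdots\oplus d(z_{n-1},z_n)\ni\alpha_0\cdots\alpha_{n-1}=w$. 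For $d(x,y)\subseteq d_G(x,y)$: by induction on $n$, peeling off $\alpha_0$ by applying the factorization property to $w=\alpha_0\cdot(\alpha_1\cdots\alpha_{n-1})$, one produces $x=z_0,z_1,\dots,z_n=y$ with $\alpha_i\in d(z_i,z_{i+1})$ for every $i<n$ (the base case $n=0$ being that $\Box\in d(x,y)$ forces $x=y$ by (i)); translating each $\alpha_i\in d(z_i,z_{i+1})$ into an edge of $\mathcal E$ oriented according to $\alpha_i$ (again using (iii) when $\alpha_i=-$), the map $i\mapsto z_i$ is a homomorphism $L_w\to G$ --- the loops of $L_w$ being absorbed by reflexivity of $G$ --- sending $0\mapsto x$ and $n\mapsto y$, so $w\in d_G(x,y)$.

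I expect the last step to be the main obstacle: converting the purely combinatorial factorization hypothesis into an honest homomorphism out of the zigzag $L_w$. The role of the factorization property is exactly to realize an abstract ``distance word'' as a concrete walk $z_0,\dots,z_n$ in $G$, and the two places where something might go wrong --- the loops of $L_w$ and the backward letters $\alpha_i=-$ --- are precisely where reflexivity of $\mathbf E$ and the involution axiom $\overline{d(y,x)}=d(x,y)$ are used.
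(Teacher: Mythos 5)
Your proof is correct. The paper itself does not prove this lemma --- it states it as a known fact and refers to \cite{jawhari-al} --- so there is no in-paper argument to compare against; your reconstruction (splitting a witnessing homomorphism $L_{uv}\to G$ at the vertex $\ell(u)$ for the forward direction, and for the converse defining $\mathcal E$ by $+\in d(x,y)$, peeling off letters via the factorization property to build a walk, and using reflexivity for the loops and the involution axiom $\overline{d(y,x)}=d(x,y)$ for the letters $-$) is exactly the standard argument from that reference, with all the delicate points (the base case $\Box\in d(x,y)\Rightarrow d(x,y)=\Lambda^{\ast}=0\Rightarrow x=y$, and the iterated triangle inequality read as reverse inclusion of concatenations) handled correctly.
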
 

Due to this later fact, the various metric spaces mentionned above (injective, absolute retracts, etc.) are graphs equipped with the zigzag distance; in particular, the distance $d_{\mathcal H_{\Lambda}}$  defined on $\mathcal H_{\lambda}$ is the zigzag distance of some graph. This facts leads to a fairly precise description of absolute retracts in the category of reflexive directed graphs (see \cite{KP2}). The situation  of oriented graphs is  different. These graphs cannot be modeled over a Heyting algebra (Theorem IV-3.1 of  \cite{jawhari-al} is erroneous), but the absolute retracts in this category can be (\cite{Sa}). The appropriate Heyting algebra is the  \emph{MacNeille completion} of $\Lambda^{\ast}$.

 The MacNeille completion is in some sense the least complete lattice extending $\Lambda^{\ast}$. The definition goes as follows. If $X$ is a subset of $\Lambda^{\ast}$ ordered by the subword ordering then 
 
 $$X^{\Delta}:= \bigcap_{x \in X} \uparrow x$$
is the {\it upper cone} generated by $X$, and 

$$X^{\nabla}:= \bigcap_{x \in X} \downarrow x$$
is the {\it lower cone} generated by $X$. 
The pair $(\Delta, \nabla)$ of mappings on the complete 
lattice of subsets of  $\Lambda^{\ast}$ constitutes a Galois connection. Thus,  a set $Y$ is a lower cone  if 
and only if $Y = Y^{\nabla \Delta}$, while a set $W$ is 
an upper cone if and only if $W = W^{\Delta \nabla}.$ This Galois connection 
$(\Delta, \nabla)$ yields the {\it Mac Neille completion} of 
$\Lambda^{\ast}.$ This completion is realized  as the complete 
lattice $\{W^{\nabla}:  W\subseteq \Lambda^{\ast}\}$ 
ordered by inclusion or $\{Y ^{\Delta} : Y\subseteq \Lambda^{\ast}\}$ ordered by reverse inclusion. In this paper,  we choose as completion the set $\{Y ^{\Delta} : Y\subseteq \Lambda^{\ast}\}$ ordered by reverse inclusion  that we denote by $\mathbf {N}(\Lambda^{\ast})$. This complete lattice is studied in details in \cite{bandelt-pouzet}.

We recall the important fact that sets of the form  $W^{\nabla}$ for $W$ nonempty coincide with nonempty finitely generated initial segments of $\Lambda^{\ast}$ (Jullien \cite{jullien}). Hence:
\begin{lemma}\label{lem:jullien} The set $\mathbf {N}(\Lambda^{\ast})\setminus \{\emptyset\}$ is order isomorphic to the set $\mathbf I_{<\omega}(\Lambda^{\ast})\setminus \{\emptyset\}$ ordered by inclusion  and made of finitely generated initial segments of $\Lambda^{\ast}$. In particular, $\mathbf {N}(\Lambda^{\ast})\setminus \{\emptyset\}$ is a distributive lattice. \end{lemma}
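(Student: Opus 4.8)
The plan is to combine the Galois connection $(\Delta,\nabla)$ recalled above with Jullien's description of the sets $W^{\nabla}$. First I would invoke the standard fact that a Galois connection puts its two families of closed sets in dual bijection: the assignment $Y^{\Delta}\mapsto Y^{\Delta\nabla}$, with inverse $W^{\nabla}\mapsto W^{\nabla\Delta}$, is an inclusion-reversing bijection between $\{Y^{\Delta}: Y\subseteq \Lambda^{\ast}\}$ and $\{W^{\nabla}: W\subseteq \Lambda^{\ast}\}$. Since $\mathbf{N}(\Lambda^{\ast})$ is, by our choice of realization, the former family ordered by \emph{reverse} inclusion, this assignment becomes an \emph{order isomorphism} of $\mathbf{N}(\Lambda^{\ast})$ onto the family $\{W^{\nabla}: W\subseteq \Lambda^{\ast}\}$ ordered by inclusion; this is just the identification of the two realizations of the MacNeille completion mentioned above.

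Next I would check that this isomorphism carries the excised point to the excised point. In $\mathbf{N}(\Lambda^{\ast})$ the set $\emptyset$ equals $(\Lambda^{\ast})^{\Delta}$ (no word dominates every word), and it is the top element for reverse inclusion; under the isomorphism it corresponds to $(\Lambda^{\ast})^{\Delta\nabla}=\emptyset^{\nabla}=\Lambda^{\ast}$. Conversely, $W^{\nabla}=\Lambda^{\ast}$ forces $W=\emptyset$, since if $w_{0}\in W$ then $W^{\nabla}\subseteq\,\downarrow w_{0}$, a finite proper initial segment of $\Lambda^{\ast}$. Hence the isomorphism restricts to an order isomorphism of $\mathbf{N}(\Lambda^{\ast})\setminus\{\emptyset\}$ onto $\{W^{\nabla}: W\subseteq \Lambda^{\ast},\ W\neq\emptyset\}$ ordered by inclusion. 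By the fact of Jullien recalled just before the statement, this last family is precisely the set of nonempty finitely generated initial segments of $\Lambda^{\ast}$, that is $\mathbf{I}_{<\omega}(\Lambda^{\ast})\setminus\{\emptyset\}$, which yields the isomorphism asserted in the lemma.

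For the final clause, I would observe that a finitely generated initial segment $\downarrow F$ of $\Lambda^{\ast}$ (with $F$ finite) is in fact a \emph{finite} set, being the union of the finitely many finite sets $\downarrow f$, $f\in F$; hence $\mathbf{I}_{<\omega}(\Lambda^{\ast})$ is closed under finite unions and finite intersections, and any two of its nonempty members share at least $\Box$. Thus $\mathbf{I}_{<\omega}(\Lambda^{\ast})\setminus\{\emptyset\}$, with $\cup$ as join and $\cap$ as meet, is a sublattice of the power set lattice of $\Lambda^{\ast}$, hence distributive; transporting along the isomorphism, $\mathbf{N}(\Lambda^{\ast})\setminus\{\emptyset\}$ is distributive as well. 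The only substantial ingredient is Jullien's theorem, which is cited; the rest is routine bookkeeping with the Galois connection, the one mild point to watch being the matching of the element removed from $\mathbf{N}(\Lambda^{\ast})$ with the empty segment removed from $\mathbf{I}_{<\omega}(\Lambda^{\ast})$.
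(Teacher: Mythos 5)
Your proposal is correct and follows essentially the same route the paper takes, namely deducing the lemma from the Galois-connection duality between upper and lower cones together with the cited fact of Jullien; the paper leaves all of this implicit behind the word ``Hence,'' and you have simply filled in the routine details, including the correct identification of the excised elements and the observation that nonempty finitely generated initial segments form a sublattice of the power set (hence distributive). No gaps.
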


 The concatenation, order and involution defined on $\mathbf {F}(\Lambda^{\ast})$ induce a involutive  Heyting algebra $\mathcal N_{\Lambda}$ on  $\mathbf {N}(\Lambda^{\ast})$ (see Proposition 2.2 of \cite{bandelt-pouzet}). Being an involutive Heyting algebra, $\mathcal N_{\Lambda}$ supports a  distance $d_{\mathcal N_{\Lambda}}$ and this distance is the zigzag distance of a graph $G_{\mathcal {N}_{\Lambda}}$. But it is not true that every oriented graph embeds isometrically into a power of that graph. For example, an oriented cycle cannot.  The following result characterizes  graphs which can be  isometrically embedded, via the zigzag distance, into  products of reflexive and oriented zigzags. It is stated in part in Subsection IV-4 of \cite{jawhari-al}, cf. Proposition IV-4.1.

\begin{theorem}\label{theo:isometric}
	For a  directed graph   $G : = (E, \mathcal E)$ equipped with the zigzag distance,  the following properties are equivalent:
\begin{enumerate} [(i)] 
	\item $G$ is isometrically embeddable into a product of  reflexive and oriented zigzags; 
	\item $G$ is isometrically embeddable into a power of $G_{\mathcal  N_{\Lambda}}$; 
	\item The values of the zigzag distance between  vertices of $E$ belong to $\mathcal N_{\Lambda}$. 
\end{enumerate}
\end{theorem}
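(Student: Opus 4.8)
The plan is to prove the cycle of implications $(i)\Rightarrow(iii)\Rightarrow(ii)\Rightarrow(i)$, using the three objects $\mathbf{F}(\Lambda^{\ast})$, $\mathbf{N}(\Lambda^{\ast})$ and $G_{\mathcal{N}_{\Lambda}}$ built in the previous subsections. For $(i)\Rightarrow(iii)$, the first step is to compute the zigzag distance inside a reflexive oriented zigzag $L_{u}$ with $u=\alpha_{0}\cdots\alpha_{n-1}$: for vertices $i\le j$ a non-expansive map $L_{v}\to L_{u}$ sending the end-points to $i$ and $j$ is a walk in $L_{u}$ that is forced to cross each edge between consecutive vertices $k,k+1$ (for $i\le k<j$) in the direction prescribed by $\alpha_{k}$, so such a map exists iff $v$ contains $\alpha_{i}\cdots\alpha_{j-1}$ as a subword; hence $d_{L_{u}}(i,j)=\uparrow(\alpha_{i}\cdots\alpha_{j-1})$ and $d_{L_{u}}(j,i)=\uparrow\overline{\alpha_{i}\cdots\alpha_{j-1}}$ are principal final segments, thus members of $\mathbf{N}(\Lambda^{\ast})$. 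In a direct product of reflexive oriented zigzags the distance of two points is the join of the coordinatewise distances, and in $\mathbf{F}(\Lambda^{\ast})$ ordered by reverse inclusion this join is the intersection; since $\bigcap_{k}Y_{k}^{\Delta}=(\bigcup_{k}Y_{k})^{\Delta}$, the set $\mathbf{N}(\Lambda^{\ast})=\{Y^{\Delta}:Y\subseteq\Lambda^{\ast}\}$ is closed under arbitrary intersections, so every distance occurring in such a product, and a fortiori every distance in an isometric subspace, lies in $\mathbf{N}(\Lambda^{\ast})$; this is $(iii)$.

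For $(iii)\Rightarrow(ii)$, assume the zigzag distance $d_{G}$ of $G$ takes all its values in $\mathbf{N}(\Lambda^{\ast})$. By Proposition~2.2 of \cite{bandelt-pouzet}, $\mathbf{N}(\Lambda^{\ast})$ is closed under the involution and under the operation induced from the concatenation of $\mathcal{H}_{\Lambda}$, so $(E,d_{G})$ is a metric space over the involutive Heyting algebra $\mathcal{N}_{\Lambda}$. Applying Theorem~\ref{thm:hyper1} with $V=\mathcal{N}_{\Lambda}$ embeds $(E,d_{G})$ isometrically into a power of $(\mathcal{N}_{\Lambda},d_{\mathcal{N}_{\Lambda}})$. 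Since $d_{\mathcal{N}_{\Lambda}}$ is the zigzag distance of $G_{\mathcal{N}_{\Lambda}}$, since the zigzag distance of a direct product of reflexive directed graphs is the direct product of their zigzag metric spaces (a map into a product is a homomorphism iff each of its coordinates is), and since an isometry for the zigzag distance is precisely a graph embedding (because $+\in d(x,y)$ iff $(x,y)$ is an edge), this is an isometric embedding of $G$ into a power of $G_{\mathcal{N}_{\Lambda}}$, i.e.\ $(ii)$.

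For $(ii)\Rightarrow(i)$ it suffices to exhibit one isometric embedding $\iota\colon G_{\mathcal{N}_{\Lambda}}\hookrightarrow\prod_{w\in\Lambda^{\ast}}L_{w}$ for the zigzag distance: for any index set $I$ the coordinatewise map $\iota^{I}$ then embeds $G_{\mathcal{N}_{\Lambda}}^{I}$ isometrically into $\big(\prod_{w}L_{w}\big)^{I}$, which is a direct product of reflexive oriented zigzags, and the isometric subspace $G$ of $G_{\mathcal{N}_{\Lambda}}^{I}$ furnished by $(ii)$ embeds isometrically into it. To build $\iota$ one attaches to each word $w=\beta_{0}\cdots\beta_{m-1}$ a homomorphism $\pi_{w}\colon G_{\mathcal{N}_{\Lambda}}\to L_{w}$ sending a cut $c\in\mathbf{N}(\Lambda^{\ast})$ to the extreme vertex of $\{0,\dots,m\}$ reachable along $w$ compatibly with $c$, and checks, using the concrete description of the Mac Neille completion of $\Lambda^{\ast}$ in \cite{bandelt-pouzet} together with its identification with finitely generated initial segments (Lemma~\ref{lem:jullien}), both that each $\pi_{w}$ preserves the edge relation and that the family $(\pi_{w})_{w\in\Lambda^{\ast}}$ separates the zigzag distance, i.e.\ recovers $d_{\mathcal{N}_{\Lambda}}$ as the join of the distances $d_{L_{w}}(\pi_{w}(\cdot),\pi_{w}(\cdot))$. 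Alternatively, one may quote the description of the absolute retracts of oriented graphs from \cite{Sa}, which already presents $(\mathcal{N}_{\Lambda},d_{\mathcal{N}_{\Lambda}})$ as a retract of a direct product of reflexive oriented zigzags, and conclude as above.

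I expect the construction and correctness of the family $(\pi_{w})_{w}$ to be the main obstacle: it is the one point that genuinely uses the combinatorics of the cuts of the subword order rather than the general hyperconvexity machinery of Theorems~\ref{thm:hyper1}--\ref{thm:hyper2}. The remaining ingredients are routine: the walk computation in $(i)\Rightarrow(iii)$ rests on the fact that the subword ordering is a well-quasi-order (Higman's lemma), already implicit in the very definition of $\mathbf{N}(\Lambda^{\ast})$, and the closure properties of $\mathbf{N}(\Lambda^{\ast})$ needed for $(iii)\Rightarrow(ii)$ are those recalled from \cite{bandelt-pouzet}.
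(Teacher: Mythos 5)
Your implications $(i)\Rightarrow(iii)$ and $(iii)\Rightarrow(ii)$ are sound and essentially reverse-engineer two of the paper's three steps: the computation $d_{L_u}(i,j)=\uparrow(\alpha_i\cdots\alpha_{j-1})$, the closure of $\mathbf N(\Lambda^{\ast})$ under the intersections produced by sup-distances in a product, and the appeal to Theorem \ref{thm:hyper1} over $V=\mathcal N_{\Lambda}$ are all legitimate (the paper instead proves $(i)\Rightarrow(ii)$ directly, by isometrically embedding each zigzag $L_u$ into $G_{\mathcal N_{\Lambda}}$ via $i\mapsto\,\uparrow u_{<i}$ and cancellativity of the monoid $\mathcal N_{\Lambda}\setminus\{\emptyset\}$, and then gets $(iii)$ from $(ii)$ for free). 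The problem is the closing implication. You need $(ii)\Rightarrow(i)$, which forces you to isometrically embed $G_{\mathcal N_{\Lambda}}$ itself into a product of reflexive oriented zigzags; you reduce this to constructing a separating family of homomorphisms $\pi_w\colon G_{\mathcal N_{\Lambda}}\to L_w$, but you do not construct it, you only describe it as ``the extreme vertex reachable along $w$ compatibly with $c$'' and flag it as the main obstacle. As stated this is a genuine gap: it is exactly the nontrivial content of the theorem, and the fallback of citing \cite{Sa} presents $\mathcal N_{\Lambda}$ as an absolute retract in the category of hole-preserving maps, which does not by itself produce an isometric embedding into a \emph{product of zigzags}.

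The paper closes the cycle the other way, proving $(iii)\Rightarrow(i)$ directly, and the mechanism that makes this work is worth internalizing because it dissolves your obstacle. For each pair $x,y$ and each word $u\in(d_G(x,y))^{\nabla}$, the partial map $x\mapsto 0$, $y\mapsto\ell(u)$ is non-expansive into $L_u$ because $d_{L_u}(0,\ell(u))=\uparrow u\leq d_G(x,y)$; since the metric space of $L_u$ is hyperconvex (balls are intervals of $\{0,\dots,\ell(u)\}$, hence convex and $2$-Helly), this two-point partial map extends to a non-expansive map $f_{x,y,u}$ defined on all of $G$ (Claim \ref{claim-extension property}). The product map $z\mapsto(f_{x,y,u}(z))_{x,y,u}$ is then an isometry into $\prod L_u$ precisely because hypothesis $(iii)$ gives $d_G(x,y)=((d_G(x,y))^{\nabla})^{\Delta}$: whenever $v\notin d_G(x,y)$ some $u\in(d_G(x,y))^{\nabla}$ satisfies $u\not\leq v$, and the coordinate $f_{x,y,u}$ witnesses the separation. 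So the only extension problem one ever has to solve is for maps defined on two points, handled wholesale by hyperconvexity, rather than your global cut-to-vertex assignment on all of $\mathbf N(\Lambda^{\ast})$. If you want to keep your ordering of the implications, the honest fix is to prove this $(iii)\Rightarrow(i)$ argument and apply it to $G_{\mathcal N_{\Lambda}}$ (whose distance values lie in $\mathcal N_{\Lambda}$ by construction) to obtain the embedding $\iota$ you postulate; at that point you have reproduced the paper's key step anyway.
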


\begin{proof} $(i)\Rightarrow (ii) \Rightarrow (iii)\Rightarrow (i)$.

$(i)\Rightarrow (ii)$. The proof relies on the following:

\begin{claim}\label{claim-zigzag}Every finite reflexive oriented zigzag is isometrically embeddable into $G_{\mathcal  N_{\Lambda}}$. 
\end{claim}

\noindent{\bf Proof of Claim \ref{claim-zigzag}.}
Let $L$ be a finite reflexive oriented zigzag. Let $n$ be its number of vertices. There is a word $u:=\alpha_0\cdots \alpha_i \cdots\alpha_{n-1}\in \Lambda^{\ast}$ such that $L$ is isomorphic to $L_u:= (\{0, \dots n\}, \mathcal L_{u})$. Let $\varphi :\{0,\dots, n\}\rightarrow N(\Lambda^{\ast})$ be the map defined by $\varphi (i):= \uparrow u_{<i}$ where  $u_{<i}:= \Box$ if $i=0$ and $u_{<i}:= \alpha_0\cdots \alpha_{i-1}$ otherwise. We claim that $\varphi$ is an isometry from $L$ equipped with the zigzag distance into $(\mathcal N_{\Lambda}, d_{\mathcal N_{\Lambda}})$, that is $d_{L}(i,j)= d_{\mathcal N_{\Lambda}}(\varphi(i), \varphi(j))$ for all $i,j\leq n$. It suffices to check that this equality holds for $i<j$. In this case, $d_{L}(i,j)= \uparrow \alpha_i\cdots \alpha_{j-1}$. In $(\mathcal N_{\Lambda}, d_{\mathcal N_{\Lambda}})$,  we have: 

\begin{equation}
d_{\mathcal N_{\Lambda}}(v,v\oplus w)=w , \end{equation} 

for all $v\in \mathcal N_{\Lambda}$, $w\in \mathcal N_{\Lambda}\setminus \{\emptyset\}$.

Indeed, due to the definition of the distance in $\mathcal N_{\Lambda}$, we have $u+ d_{\mathcal N_{\Lambda}}(v,v\oplus w)= u+w$.  As a monoid,  $\mathcal N_{\Lambda}\setminus \{\emptyset\}$ is cancellative (see Lemma 11 of  \cite{KPR}). Hence $d_{\mathcal N_{\Lambda}}(v,v\oplus w)=w$. Thus, $d_{\mathcal N_{\Lambda}}(\varphi (i),\varphi(j))=d_{\mathcal N_{\Lambda}}(\varphi (i),\varphi(i)\oplus \uparrow \alpha_i\cdots \alpha_{j-1})= \uparrow \alpha_i\cdots \alpha_{j-1}=d_{L}(i,j)$, as required. Since $(\mathcal N_{\Lambda}, d_{\mathcal N_{\Lambda}})$ is hyperconvex, the distance $d_{\mathcal N_{\Lambda}}$ is the zigzag distance associated to the oriented graph $G_{\mathcal  N_{\Lambda}}$, hence the isometric  embedding $\varphi$ induces a graph embedding.
\hfill $\Box$

With Claim \ref{claim-zigzag}  we may embed isometrically any  product of zigzags into a power of $G_{\mathcal N_{\Lambda}}$. This proves that $(ii)$ holds. 

$(ii)\Rightarrow (iii)$. If $G'$ is a product of graphs $G'_i$ , the zizag distance on $G'$ is the sup-distance on the product of the metric spaces $(G_i, d_{G_i})$. Thus, if $G$ isometrically embeds into a power of  $G_{\mathcal N_{\Lambda}}$, $(G, d_G)$ isometrically embeds into a power of $(\mathcal N_{\Lambda}, d_{\mathcal N_{\Lambda}})$. Since the distance $d_{\mathcal N_{\Lambda}}$ has values in $\mathcal N_{\lambda}$, $d_G$ has values in $\mathcal N_{\Lambda}$ too hence $(iii)$ holds.

$(iii)\Rightarrow (i)$.
 The proof follows the same lines as the proof of  Proposition IV-4.1 p.212 of \cite{jawhari-al}. 

We use the following property: 
\begin{claim}\label{claim-extension property} For each pair  of vertices   $x,  y \in E$  and each word  $u \in (d_G(x, y))^{\nabla}$, let  $L_{u}$ be reflexive oriented path 
with end points $0$ and $\ell(u)$ associated with $u$. The map carrying $x$ onto $0$ and $y$ onto $\ell(u)$ extends to a non-expansive mapping  $f_{x,y,u}$ from $G$ onto  $L_u$. 
\end{claim}

\noindent{\bf Proof of Claim \ref{claim-extension property}.}
The proof of the claim relies onto two facts. First,  $d_{L_u}(0, \ell(u))=\uparrow u$. Since  $u\in d_G(x,y)$, $\uparrow u\leq d_G(x,y)$, hence  the partial map carrying $x$ onto $0$ and $y$ onto $\ell(u)$ is a non-expansive map  from the subset $\{x,y\}$ of $G$ equipped with the zigzag distance $d_{G}$ into the space associated to the zigzag $L_u$. 
Next, such a partial map extends to $G$ to a non-expansive mapping. This is due to the fact that the space associated to $L_u$  is hyperconvex  (it is trivially convex and since  each ball in that space is an interval of its domain $\{0, \dots,  \ell(u)\}$,  any collection of balls has the $2$-Helly-property). For the fact that non-expansive maps with values into an hyperconvex space extend, see \cite{jawhari-al}. \hfill $\Box$

 Let 
$$G':= \Pi\{L_u: u\in (d_G(x, y))^{\nabla}\;  \text {and}\;  (x, y) \in E\times E \}.$$ 
For each $x,  y \in E$  and each word  $u \in (d_G(x, y))^{\nabla}$, let  $f_{x,y,u}$ be a non expansive mapping from $G$ onto  $L_u$. 
We claim that the graph  $G$ is isometrically embeddable into  $G'$ by the map  $f$ defined by setting for every $z\in E$:
$$f(z):= \{f_{x,y,u}(z): u\in (d_G(x, y))^{\nabla}\;  \text {and}\;  (x, y) \in E\times E\}.$$  

 This map is  an isometry; indeed first, by definition of the product, it is non-expansive; next, to conclude that it is an isometry, it suffices to  check that for every $v \in \Lambda^*$, if  $d_G(x, y)\not \leq \uparrow v$ then  $d_{G'}(f(x), f(y)\not \leq \uparrow v$, that is for some triple $i:= (x',y',u)$ one has $d_{G_i}(f_i(x), f_i(y))\not \leq \uparrow v$. Let $v$ and  $x, y$ such that   $d_G(x, y)\not \leq  \uparrow v $ (this amounts to  $v \not \in d_G(x, y)$). Since  $d_G(x, y)  = ((d_G(x, y))^{\nabla})^{\Delta}$ there is some   $u\in (d_G(x, y))^{\nabla}$  such that 
$u \not \leq  v$. We may set $i:=(x, y, u)$. 

We may note that  the product can be infinite  even if the graph $G$ is finite. Indeed, if $G$ consists of two  vertices $x$ and $y$ with no value on the pair $\{x, y\}$ (that is the underlying graph is disconnected) then we need infinitely many zigzags of arbitrarily long length.    

\end{proof}

\begin{lemma} \label{lem-accessible}Every element $v$ of $\mathcal N_{\Lambda}\setminus \{\Lambda^{\ast}, \emptyset\}$ is accessible. 
\end{lemma}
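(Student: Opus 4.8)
The plan is to move the problem, via Lemma~\ref{lem:jullien}, into the combinatorics of finitely generated initial segments of $(\Lambda^{\ast},\le)$. Under that order isomorphism $\mathcal N_\Lambda\setminus\{\emptyset\}$ becomes the family of nonempty finitely generated initial segments ordered by inclusion, with $0$ corresponding to $\downarrow\Box$; and, tracing the isomorphism through, the involution becomes $\overline I=\{\overline u:u\in I\}$ (an initial segment again, since the word-involution is order-preserving for the subword order) and $\oplus$ becomes the concatenation $I\oplus J=\{z:z\le uw\ \text{for some}\ u\in I,\ w\in J\}$ (cf.~\cite{bandelt-pouzet}). So I would fix $v\in\mathcal N_\Lambda\setminus\{\Lambda^{\ast},\emptyset\}$ and write it as $I=\downarrow\{u_1,\dots,u_k\}$ where $u_1,\dots,u_k$ are the (finitely many) maximal elements of $I$, hence a finite antichain; since $v\neq 0$ means $I\neq\{\Box\}$ and $\Box$ lies strictly below every other word, every $u_i$ is then a nonempty word. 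The task becomes: produce a finitely generated initial segment $J$ (the element $r$) with $I\not\subseteq J$ (which is $v\not\le r$) and $I\subseteq J\oplus\overline J$ (which is $v\le r\oplus\overline r$).

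First I would dispose of the degenerate case $\ell:=\max_i\ell(u_i)=1$: then the $u_i$ are single letters forming an antichain, so $I$ is $\downarrow\{+\}$, $\downarrow\{-\}$, or $\downarrow\{+,-\}$, and taking $J=\downarrow\{-\}$, $\downarrow\{+\}$, $\downarrow\{+\}$ respectively gives $I\not\subseteq J$ while $J\oplus\overline J$ is $\downarrow\{-+\}$, $\downarrow\{+-\}$, $\downarrow\{+-\}$, each of which contains $I$.

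For the main case $\ell\ge 2$, let $u_1$ be a generator of length $\ell$. I would cut each generator into two halves, $u_i=c_id_i$ with $\ell(c_i)=\lceil\ell(u_i)/2\rceil$, and put $J:=\downarrow\{c_i:1\le i\le k\}\cup\downarrow\{\overline{d_i}:1\le i\le k\}$, a nonempty finitely generated initial segment. Every generator of $J$ is one of the words $c_i,\overline{d_i}$, all of length at most $\lceil\ell/2\rceil\le\ell-1$, so no word of length $\ell$ embeds in a generator of $J$; in particular $u_1\notin J$, that is $I\not\subseteq J$. Conversely, for each $i$ we have $c_i\in J$ and $\overline{d_i}\in J$, hence $d_i=\overline{\overline{d_i}}\in\overline J$, so $u_i=c_id_i\in J\oplus\overline J$; as $J\oplus\overline J$ is downward closed and the $u_i$ generate $I$, this yields $I\subseteq J\oplus\overline J$. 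Hence the element $r$ corresponding to $J$ witnesses that $v$ is accessible.

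I do not expect a serious obstacle: the argument is essentially mechanical once the dictionary is in place. The points requiring care are bookkeeping — keeping straight the reverse-inclusion convention in the definition of $\mathcal N_\Lambda$ when translating inequalities, and using that the word-involution is order\emph{-preserving} for the subword order (so $\overline J$ really is an initial segment and $\overline{d_i}\in J$ genuinely gives $d_i\in\overline J$) — together with justifying the concrete form of $\oplus$ on initial segments from \cite{bandelt-pouzet}. The single idea behind it all is that every distance value other than $0$ and the top element admits a ``proper half'', obtained here by splitting each generating word in two and reflecting its second half through the involution.
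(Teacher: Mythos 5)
Your proof is correct, but it follows a genuinely different route from the paper's. The paper argues in two cases: for a principal element $v=\uparrow u$ it flips the last letter of $u$ to obtain $u'$ and takes $r=\uparrow u'$, so that $u\not\le u'$ while $u\le u'\,\overline{u'}$; it then reduces the general case to the principal one by writing $v=v_1\vee v_2$ with $v_1$ principal and chosen minimally, and invoking distributivity of the lattice $\mathcal N_{\Lambda}\setminus\{\emptyset\}$ (Lemma \ref{lem:jullien}) together with distributivity of $\oplus$ over $\vee$. You instead work uniformly with the finite antichain of generators and cut each generating word in half, $u_i=c_id_i$, taking $r$ to be generated by the $c_i$ and the $\overline{d_i}$: then $v\not\le r$ is a pure length count (once $\ell\ge 2$, every generator of $r$ has length at most $\lceil \ell/2\rceil\le \ell-1$, so the longest $u_i$ cannot embed), and $v\le r\oplus\overline r$ falls out of $u_i=c_i d_i$ with $c_i\in J$ and $d_i\in\overline J$. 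What your version buys is a single explicit construction with no reduction to the principal case and no appeal to the minimal join decomposition or to lattice distributivity in the verification of $v\not\le r$; the price is the small degenerate case $\ell=1$ and the need to spell out the dictionary (order, involution, $\oplus$) between $\mathcal N_{\Lambda}\setminus\{\emptyset\}$ and nonempty finitely generated initial segments — but both proofs already lean on exactly that dictionary through Lemma \ref{lem:jullien} and \cite{bandelt-pouzet}, so this is not an extra hypothesis. Both arguments rest on the same underlying idea, that every value other than $0$ and the top admits a ``proper half''; yours realizes the half literally, the paper's realizes it by perturbing one letter.
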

\begin{proof}
Case $1$.  $v= \uparrow u$. Then $n:=\ell(u)\not =0$ hence $u= \alpha_0\cdots \alpha_{n-1}$. Set $u':=\alpha_0\cdots \alpha_{n-2}\overline {\alpha_{n-1}}$ and $r:= \uparrow u'$. Since $u\not \leq u'$, $v\not\leq r$. On an other hand $u\leq u'\oplus \overline {u'}$ hence $v= \uparrow u\leq \uparrow u'\oplus \overline {u'}= (\uparrow u')\oplus (\uparrow \overline {u'})= r\oplus \overline r$. Hence $v$ is accessible. 

Case $2$. If $v$ is not of the form $\uparrow u$ for some   $u\in \Lambda^{\ast}$. Since $u$  is not the emptyset, it is a finite join of elements of the form $\uparrow u$. Thus, we may suppose that $v= v_1 \vee  v_2$ where $v_1= \uparrow u_1<v$ and $v_2<v$ and furthermore that $v'_1\vee v_2<v$ for all $v'_1 <v_1$.  According to Case $1$, there is some $r_1$ such that $v_1 \not \leq r_1$ and $v_1\leq r_1 \oplus \overline r_1$. Let $r:= r_1 \vee v_2$. We claim first that $v\not\leq r$. Suppose the contrary, according to Lemma \ref{lem:jullien},  $\mathcal N_{\Lambda}\setminus \{\emptyset\}$ is a distributive lattice, hence  from $v\leq r$ we get $v= v\wedge r = (v\wedge r_1) \vee (v\wedge v_2)= (v\wedge r_1)\vee v_2$, contradicting the choice of  $v_1$. Next, we claim that $v\leq r\oplus \overline r$. The operation $\oplus$ and $\vee$ distribute (Theorem 10 in \cite{KPR}), hence $r\oplus  \overline r= (r_1\vee v_2)\oplus (\overline{r_1\vee v_2})= (r_1\vee v_2)\oplus (\overline{r_1} \vee \overline {v_2})= (r_1\oplus \overline r_1) \vee (v_2\oplus \overline r_1)\vee (r_1\oplus \overline v_2)\vee (v_2\oplus \overline v_2)$. Since $v_1\leq r_1\oplus \overline  r_2$, it follows that $v\leq r\oplus  \overline r$. Hence $v$ is accessible. 
\end{proof}

\begin{theorem}\label{thm:cor4}
If a graph $G$, finite or not, is a retract of a product of reflexive and directed zigags of bounded length then every commuting set of endomorphisms has a common fixed point. 
\end{theorem}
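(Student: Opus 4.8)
The plan is to derive Theorem~\ref{thm:cor4} from Theorem~\ref{thm:cor2} by equipping $G$ with a generalized metric structure that has a compact and normal structure and whose non-expansive self-maps are exactly the endomorphisms of $G$. Write $G$ as a retract of a product $P:=\prod_{i\in I}L_{u_{i}}$, where each $L_{u_{i}}$ is the reflexive oriented zigzag of a word $u_{i}$ over $\Lambda$ with $\ell(u_{i})\le N$, $N$ fixed. Since a coretraction is an isometry, $G$ embeds isometrically into $P$; and by Claim~\ref{claim-zigzag}, together with the fact that the zigzag distance of a product is the sup-distance of the factors, $P$ embeds isometrically into a power of $G_{\mathcal N_{\Lambda}}$. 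Hence Theorem~\ref{theo:isometric} applies and the zigzag distances $d_{G}$ and $d_{P}$ take their values in $\mathcal N_{\Lambda}$; so $(G,d_{G})$ and $(P,d_{P})$ are $\mathcal N_{\Lambda}$-metric spaces and $(G,d_{G})$ is a non-expansive retract of $(P,d_{P})$. As homomorphisms between reflexive directed graphs are precisely the non-expansive maps, the endomorphisms of $G$ coincide with the non-expansive self-maps of $(G,d_{G})$, so it suffices to produce a common fixed point for any commuting family of the latter.

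First I would check that $P$ is hyperconvex. Each finite reflexive oriented zigzag $L_{u}$ is hyperconvex: its balls are intervals of the linearly ordered vertex set $\{0,\dots,\ell(u)\}$, so the family of balls has the $2$-Helly property and the convexity property holds by inspection (this is the hyperconvexity already used inside the proof of Claim~\ref{claim-zigzag}); equivalently, by Theorem~\ref{thm:hyper2}, $L_{u}$ is a retract of a power of $(\mathcal N_{\Lambda},d_{\mathcal N_{\Lambda}})$. Consequently $P$ is a retract of a power of $(\mathcal N_{\Lambda},d_{\mathcal N_{\Lambda}})$, hence hyperconvex.

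The heart of the matter, and the step I expect to be the main obstacle, is that $P$ is bounded. Every value $d_{L_{u_{i}}}(a,b)$ has the form $\uparrow v$ for a word $v$ with $\ell(v)\le N$, and there are only finitely many words over $\Lambda$ of length $\le N$; hence every value of $d_{P}$ is a join of elements of the finite set $\{\uparrow v:\ell(v)\le N\}$, and therefore $\delta(P)\le\bigcap_{\ell(v)\le N}\uparrow v$. This intersection is nonempty (it contains the concatenation of all words of length $\le N$), so $\delta(P)$ is not the top element $\emptyset$ of $\mathcal N_{\Lambda}$. By Lemma~\ref{lem-accessible}, every element of $\mathcal N_{\Lambda}$ other than $0$ and the top is accessible; therefore $0$ is the only inaccessible element below $\delta(P)$, that is, $P$ is bounded. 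The delicate point is that $I$ may be infinite even when $G$ is finite, so the bound on the lengths of the zigzags is genuinely needed: through the finiteness of the alphabet it is what keeps $\delta(P)$ away from the top, and hence prevents any inaccessible element other than $0$ from lying below $\delta(P)$.

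Finally, Corollary~\ref{cor:compact+normal} gives that $P$ has a compact and normal structure. Being a retract of $P$, the space $(G,d_{G})$ is (isometric to) a one-local retract of $P$: restrict to $G\cup\{x\}$, for each $x$, the idempotent non-expansive map realizing the retraction. Hence, by Lemma~\ref{lem:compact} read through the correspondence of Lemma~\ref{lem:metric-relation}, $(G,d_{G})$ also has a compact and normal structure. Theorem~\ref{thm:cor2} now yields a common fixed point for every commuting family of non-expansive self-maps of $(G,d_{G})$, equivalently for every commuting set of endomorphisms of $G$; this is the assertion of Theorem~\ref{thm:cor4}.
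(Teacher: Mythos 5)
Your proof is correct and follows essentially the same route as the paper: hyperconvexity of $G$ as a retract of a product of hyperconvex zigzags, boundedness via Lemma~\ref{lem-accessible} together with the bounded-length hypothesis, and then the fixed point theorem for bounded hyperconvex spaces. The only (harmless) difference is that you establish boundedness for the ambient product $P$ and transfer the compact and normal structure to $G$ through the one-local-retract lemma, whereas the paper argues directly that $G$ itself is bounded and hyperconvex and invokes Theorem~\ref{thm:cor3}; your version usefully spells out why the diameter stays strictly below the top element $\emptyset$ of $\mathcal N_{\Lambda}$, a point the paper leaves implicit.
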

\begin{proof}
We may suppose that $G$ has more than one vertex. The diameter of $G$ equipped with the zigzag distance belongs to $\mathcal N_{\Lambda}\setminus \{\Lambda^{\ast}, \emptyset\}$. According to  Lemma \ref{lem-accessible},  it is accessible, hence as a metric space $G$ is bounded. Being a retracts of a product of hyperconvex metric spaces it is hyperconvex. Theorem \ref{thm:cor3} applies.   
\end{proof}

\subsection{Bibliographical comments}Generalizations of the notion of metric space  are  as old as the notion of ordinary metric space  and arises from geometry, logic as well as probability. Ours, originating  in  \cite{jawhari-al},  is one among several; the paper \cite{jawhari-al}
contains  $71$  references, e.g. Blumenthal and Menger \cite{blumenthal}, \cite{blumenthal1} \cite{blumenthal-menger}, as well as Lawvere \cite{lawvere}, to mention just a few. It was motivated by the work of Quilliot on graphs and posets \cite{Qu1,Qu2}. It extended to metric spaces over an involutive Heyting algebra (more appropriately an involutive  op-quantale) the characterization of hyperconvex spaces due to Aronszjan-Panitchpakdi \cite {aronszajn} and the existence of injective envelope, obtained for ordinary metric spaces by Isbell \cite{isbell}. It  contained also a study of hole-preserving maps and  a characterization of absolute retracts w.r.t. these maps by means of the replete space.  For more recent developments, see \cite{Abu-Sbeih-khamsi, bandelt-pouzet, KP1, KP2, KPR}.


\begin{thebibliography}{99}
\bibitem {abian-brown} Abian, S., Brown, A., 
A theorem on partially ordered sets, with applications to fixed point theorems.
Canad. J. Math. 13 1961 78- 82. 
\bibitem{Abu-Sbeih-khamsi}
Abu-Sbeih, M. Z.,  Khamsi, M. A. 
Fixed point theory in ordered sets from the metric point of view. Topics in fixed point theory, 223--236, Springer, Cham, 2014.
\bibitem {aronszajn}  Aronszajn N. and Panitchpakdi, P.,  Extensions of
uniformly continuous transformations and hyperconvex metric spaces, Pac. J. Math., 6(1956),405-439.

\bibitem {baclawski-bjorner}Baclawski, K., A.Bj\"ošrner, A., Fixed points in partially ordered sets, Advances in Math. 31,(1979)  263-287.

\bibitem {baillon}Baillon, J.B.,  Non expansive mapping and hyperconvex spaces. Fixed point theory and its applications (Berkeley, CA, 1986), 11-19, Contemp. Math., 72, Amer. Math. Soc., Providence, RI, 1988. 

\bibitem{bandelt-pouzet} Bandelt, H-J., Pouzet, M.,  {\em The MacNeille completion of the free monoid over an ordered alphabet}, preprint pdf, 23pp, Feb.  2006.
\bibitem{blumenthal} L.M.~Blumenthal, {\em Boolean geometry,} I. Rend. Circ. Mat. Palermo (2) 1 (1952), 343--360.
%
 \bibitem{blumenthal1} Blumenthal, L.M.,   {\em Theory and applications of distance geometry,}  Second edition Chelsea Publishing Co., New York 1970 xi+347 pp.
\bibitem{blumenthal-menger} Blumenthal, L.M., Menger, K., {\em Studies in geometry,} W. H. Freeman and Co., San Francisco, Calif. 1970 xiv+512 pp.

\bibitem	{bruck-all}Bruck, R.E. , A common fixed point theorem for a commuting family of non-expansive mappings, Pac. J. Math., 53 (1974), 59-71.

\bibitem{cohn}  Cohn, P. M. Universal algebra. Harper \& Row, Publishers, New York-London 1965 xv+333 pp.

\bibitem	{demarr}DeMarr, R., Common fixed point theorem for commuting contraction mappings, Pac. J. of Math., 13 (1963), 1139-1141.

 \bibitem{deza-deza}Deza, M.,  Deza, E.,   {\em Encyclopedia of distances},  Fourth edition. Springer, Heidelberg, 2016,  xxii+756 pp.
%
%
\bibitem{dress} Dress, A.W.N,   Trees, tight extensions of metric spaces, and the cohomological dimension of certain groups, a note on combinatorial properties of metric spaces,  Adv. in
Math. 53 (3)(1984), 321--402.

\bibitem{duffus-rival}  Duffus, D.,  Rival, I., structure theory for ordered sets. Discrete Math. 35 (1981), 53-118.
\bibitem {isbell} Isbell, J.R., 
 Six theorems about injective metric spaces,
Comment. Math. Helv. 39 1964 65-76.




\bibitem{jawhari-al} Jawhari, E., Misane, D., Pouzet, M., Retracts: Graphs and ordered sets from the metric point of view, in "Combinatorics and ordered sets" I.Rival ed., Contemporary Math. Vol 57 (1986),175-226. 


 \bibitem{jullien} Jullien, P.,   Sur un th\'eor\`eme d'extension dans la th\'eorie des mots. C. R. Acad. Sci. Paris S\'er. A-B 266 1968 A851--A854. 


\bibitem{KaPo1} Kabil, M., Pouzet, M.,  Une extension 
d'un
th\'eor\`eme de P.Jullien sur les \^ages de mots, Theoretical Informatics
and Applications. Vol 26, n$^\circ$ 5, (1992), 449-482.



\bibitem{KP1} Kabil, M., Pouzet, M., Ind\'ecomposabilit\'e et
irr\'eductibilit\'e dans la vari\'et\'e des r\'etractes absolus
des graphes r\'eflexifs,  C.R.Acad.Sci. Paris  S\'erie A  321
 (1995), 499-504.

\bibitem{KP2}Kabil, M., Pouzet, M.,  Injective envelope of graphs and
transition systems, Discrete Math.192 (1998), 145-186.

\bibitem{KPR}Kabil, M., Pouzet, Rosenberg, I.G, Free monoids and   metric spaces, To the memory of Michel Deza, 15 p.  March 2017. To appear in Europ. J. of Combinatorics.  arXiv:   1705.09750v1, 27 May 2017. 


\bibitem {khamsi}Khamsi, M.  A., One-local retract and common fixed point for commuting mappings in metric spaces,  Nonlinear Anal. 27 (1996), no. 11, 1307--1313. 
\bibitem {kirk}Kirk, W.A., A fixed point for mappings which do not increase distances, Amer. Math. Monthly, 72 (1965), 1004-1006.
\bibitem{lau} Lau, D.,  Function algebras on finite sets. A basic course on many-valued logic and clone theory. Springer Monographs in Mathematics. Springer-Verlag, Berlin, 2006. xiv+668 
\bibitem{lawvere}Lawvere, F.W., Metric spaces, generalized logic, and closed categories, Rend. Sem. Mat. Fis. Milano 43
(1973), 135--166 (1974).
	
\bibitem	{lim}Lim, T.C., A fixed point theorem for families of non-expansive mappings, Pacific J. of Math., 53 (1974), 487-493.
\bibitem {misane} Misane, D., 	R\'etracts absolus d'ensembles ordonn\'ees et de graphes. Propri\'et\'es du point fixe.             
	Th\`ese de 3\`eme cycle, Universit\'e Claude-Bernard, 14 septembre 1984.
\bibitem{nevermann-wille}Nevermann, P., Wille, R., The strong selection property and ordered sets of finite length, Algebra Universalis, 18 (1984). 18-28. 
\bibitem {penot} Penot, J.P., Fixed point theorems without convexity, in Analyse non convexe (1977, PAU), Bull. Soc. Math. France, M\'eŽmoire 60, (1979), 129-152.


\bibitem{penot}  Penot, J-P., Une vue simplifi\'ee de la th\'eorie de la complexit\'e. Gaz. Math. No. 34 (1987), 61-77.

 \bibitem{pouzet}Pouzet, M.,  Une approche m\'etrique de la r\'etraction dans les ensembles ordonn\'es et les graphes.  Proceedings of the conference on infinitistic mathematics (Lyon, 1984), 59--89, Publ. D\'ep. Math. Nouvelle S\'er. B, 85-2, Univ. Claude-Bernard, Lyon, 1985. 
 
 \bibitem{pouzet-rosenberg} Pouzet, M., Rosenberg, I.G., General metrics and contracting operations, in  Graphs and combinatorics (Lyon, 1987; Montreal, PQ, 1988). Discrete Math. 130 (1994),103--169.
 
 
 \bibitem{Qu1} Quilliot, A., Homomorphismes, points fixes,
r\'etractions et jeux de poursuite dans les graphes, les ensembles
ordonn\'es et les espaces m\'etriques, Th\`ese de doctorat d'Etat,
Univ Paris VI (1983).
\bibitem{Qu2} Quilliot, A., An application of the Helly
property
to the partially ordered sets, J. Combin. Theory, serie $A$, 35 (1983),
185-198.
\bibitem{rosenthal} Rosenthal, K.I., Quantales and their applications,  Pitman Research Notes in Mathematics Series, 234. Longman Scientific \& Technical, Harlow; copublished in the United States with John Wiley \& Sons, Inc., New York, 1990. x+165 pp.

\bibitem{Sa} Sa\"{\i}dane, S.,   Graphes et langages: une approche
m\'etrique,  Th\`{e}se de doctorat, Universit\'e Claude-Bernard, Lyon1, 14 Novembre 1991.
\bibitem {schroder} Schr\"oder, B., 
Ordered sets.
An introduction with connections from combinatorics to topology. Second edition. Birkhäuser/Springer, 2016. xvi+420 pp.
\bibitem {sine} Sine, R.C., On nonlinear contractions in sup norm spaces, Nonlinear Analysis, 3 (1979), 885-890.

\bibitem{snow}Snow, J.W,  A constructive approach to the finite congruence lattice representation problem. Algebra Universalis 43 (2000), no. 2-3, 279--293.

\bibitem{soardi} Soardi, P., Existence of fixed points of non-expansive mappings in certain Banach lattices, Proc. A.M.S., 73 (1979), 25-29.

\bibitem{tarski}Tarski, A., A lattice theoretical fixed point theorem and its applications, Pac. J. of Math. 5      (1955), 285-309.


 \end{thebibliography}
\end{document}